\documentclass[11pt, a4paper]{article}
\usepackage{amsmath, amssymb, amsfonts, amsthm}
\usepackage[pdftex]{graphicx, color}

\newcommand{\E}{{\bf{E}}}
\newcommand{\PP}{{\bf{P}}}
\newcommand{\Var}{{\bf{Var}}}

\newtheorem{tm}{Theorem}
\newtheorem{lem}{Lemma}
\newtheorem{col}{Corollary}

\usepackage{hyperref}
\hypersetup{
    colorlinks=true,
    linkcolor=blue,
    filecolor=magenta,      
    urlcolor=cyan,
}

\begin{document}

\parindent=0pt

\smallskip
\par\vskip 3.5em
\centerline{\Large  On the strength of connectedness  of  unions of random graphs}

\vglue1truecm

\centerline {Mindaugas Bloznelis}

\bigskip

\centerline{Vilnius University, Faculty of Mathematics and Informatics}
\centerline{
\, \ Didlaukio 47, LT-08303 Vilnius, Lithuania} 

\vglue 1truecm

\qquad E-mail:\, \
mindaugas.bloznelis@mif.vu.lt

\vglue1truecm

\abstract{
Let $G_1,\dots, G_m$ be independent
identically distributed
random subgraphs of the complete graph ${\cal K}_n$. 
We 
analyse the threshold behaviour of the strength of connectedness  of the union 
$\cup_{i=1}^mG_i$ defined on the vertex 
set of ${\cal K}_n$.  
Let $a=\min\{t\ge 1:\, {\bf P}\{\delta(G_1)=t\}>0\}$ be 
the minimal non zero vertex degree  attained with positive 
probability. Given $k\ge 0$ let $\lambda(k)=\ln n+k\ln\frac{m}{n}-\frac{m}{n}
{\bf E} X$,  where
$X$ stands for the number of non
isolated vertices of $G_1$.
Letting $n,m\to+\infty$ 
we show that 
${\bf P}\{\cup_{i=1}^mG_i$ is $a(k+1)$-connected$\}
\to
1
$ for
$\lambda(k)\to -\infty$, and
${\bf P}\{\cup_{i=1}^mG_i$ is $ak+1$-connected$\}
\to
0
$ for
$\lambda(k)\to +\infty$.
In particular, the 
connectivity strength of the union graph 
$\cup_{i=1}^mG_i$ increases in steps of size $a$.
Our results are obtained in a more general setting where the contributing random subgraphs do not need to be identically distributed.} 
\smallskip
\par\vskip 1em
\noindent{\bf Keywords:} Connectivity threshold, $k$-connectivity threshold, random graph, graph union, community affiliation graph, clique graph of a hypergraph.

\par\vskip 2em

\section{Introduction and results}

After the seminal work of Erd\H os and R\'enyi \cite{ErdosRenyi1959,ErdosRenyi1961}  the strength of connectedness of large random graphs has attracted considerable attention in the literature and  remains an area of active research, see, for example,
\cite{BergmanLeskela},
\cite{BlackburnGerke2009}, 
\cite{Devroye_Fraiman_2014},
\cite{GodehardJaworskiRybarczyk2007},
\cite{Penrose_1999},
\cite{Rybarczyk2011}
\cite{Shang_2023},
\cite{Wormald_1981}, 
 \cite{YaganMakowski2012},
 \cite{Zhao_yagan_Gligor_2017} for a variety of random graph models considered.

In the present paper we study the strength of connectedness
of random graph unions.
Let $G_1=({\cal V}_1,{\cal E}_1),\dots, G_m=({\cal V}_m,
{\cal E}_m)$ be random subgraphs of the complete graph 
${\cal K}_n$.  
We denote ${\cal V}=\{1,\dots, n\}=:[n]$
 the vertex set of ${\cal K}_n$ and consider
   the union graph $G_{[n,m]}=G_1\cup\cdots\cup G_m$ 
 on the vertex set ${\cal V}$ and with the edge 
 set ${\cal E}_1\cup\cdots\cup{\cal E}_m$.
We impose the following  conditions on the sequence 
$G_1,\dots, G_m$: (i) subgraphs $G_1,\dots, G_m$ are selected  independently at random;
 (ii)  given $|{\cal V}_i|$, the vertex set ${\cal V}_i$
is distributed uniformly across the class of subsets of ${\cal V}$ of size $|{\cal V}_i|$;
(iii) given ${\cal V}_i$ the distribution of $G_i$ is 
invariant under permutations of  vertices 
of ${\cal V}_i$. 
Note that we do not specify the edge distributions of 
$G_i$, which may vary for $i=1,\dots, m$. 

Let us consider  two examples.

{\bf{Example 1}}. Let  $F_1,\dots, F_m$
be a sequence of (non-random) graphs without isolated vertices. 
Let ${\cal V}_1',\dots,{\cal V}'_m$ denote their vertex sets.
We
 map each ${\cal V}'_i$ in ${\cal V}$ by 
an injective map $\pi_i$, say. In this way we obtain
a copy of $F_i$ with the vertex 
set ${\cal V}_i=\pi_i({\cal V}'_i)\subset {\cal V}$.
Assuming that each 
injection $\pi_i$  is selected uniformly at random 
(from the class of injections ${\cal V}'_i\to{\cal V}$)
and 
independently across $i=1,\dots, m$ we obtain a sequence 
of random copies of $F_1,\dots, F_m$, which we denote $G_1,\dots, G_m$. 
In the particular case, where $F_1=\cdots=F_m={\cal K}_2$,
the random graph $G_{[n,m]}$ is a union of $m$ randomly inserted edges, which may overlap.

{\bf{Example 2}}. 
Let 
$(Y_1,Q_1),\dots,(Y_m,Q_m)$ be 
independent bivariate random variables 
taking values in $[n]\times [0,1]$. 
Given $(Y_i,Q_i)$, we generate Bernoulli random graph  on $Y_i$ vertices and with edge density $Q_i$. The resulting random graph is denoted $G'_i$.
We assume that random graphs $G'_1,\dots, G'_m$ are independent. 
We  map
vertex sets of $G'_1,\dots, G'_m$ in ${\cal V}$ by 
random injective maps as in Example 1 
above. In this way we obtain copies 
of $G'_1,\dots, G'_m$, which we denote
$G_1,\dots, G_m$. In the particular case, where $\PP\{Q_i=1\}=1$ for $1\le i\le m$, the random graph $G_{[n,m]}$ is the union of  cliques of sizes $Y_1,\dots, Y_m$ randomly scattered over the vertex set ${\cal V}$.
 
In Theorem \ref{T1} below we show the connectivity 
threshold for the union graph $G_{[n,m]}$. We find that 
the only parameter that defines the threshold is the 
average number of non isolated vertices
in $G_1,\dots, G_m$. 
In Theorem \ref{T2} we show the $k$-connectivity threshold for $G_{[n,m]}$. We notice that the connectivity strength of
$G_{[n,m]}$ increases in steps of size $a$, where 
$a\ge 1$ is the minimal non-zero vertex degree
attained with positive probability by a fraction of subgraphs $G_1,\dots, G_m$, for detailed definition see (\ref{a}) below.

Before presenting our results, we introduce some notation.
Given two sequences of positive 
numbers $\{y_n\}$ and $\{z_n\}$ we write
$y_n\prec z_n$ whenever $z_n-y_n\to+\infty$ as $n\to+\infty$.
By $\delta(G)$ we denote the minimal degree of a graph $G$.
For $v\in{\cal V}_i$ we denote by $d_i(v)$ the number of vertices of ${\cal V}_i$ linked to $v$ by the edges of $G_i$. We put $d_i(u)=0$ for $u\notin {\cal V}_i$.
By $X_i(t)$ we denote the number of vertices $
v\in{\cal V}_i$ with $d_i(v)=t$. 
By $X_i=X_i(1)+X_i(2)+\cdots$ we denote the number of non isolated vertices of $G_i$. 
Let $i_*$ be a number selected  uniformly at random from
$[m]$ and independently of $G_{1},\dots, G_{m}$. 
The random variable  $X_{i_*}$ represents a mixture of $X_1,\dots, X_m$ with the probability distribution
$\PP\{X_{i_*}=s\}=\frac{1}{m}\sum_{i=1}^m\PP\{X_{i}=s\}$, $s=0,1,\dots$. Likewise the random variable $X_{i_*}(t)$ is a mixture of $X_1(t),\dots, X_m(t)$.
We denote
\begin{align*}
&
\alpha
=
\PP\{X_{i_*}>0\}
=
\frac{1}{m}\sum_{i=1}^n\PP\{X_{i}>0\},
\qquad
\kappa=\E X_{i_*}
=\frac{1}{m}\sum_{i\in[m]}\E X_{i},
\\
&
\kappa(t)=\E X_{i_*}(t)
=\frac{1}{m}\sum_{i\in[m]}\E X_{i}(t),
\qquad
\lambda(k)=\ln n+k\ln\frac{m}{n}-\frac{m}{n}\kappa.
\end{align*}

\noindent In Theorems \ref{T1}  and \ref{T2}  we consider a sequence of random graphs $G_{[n,m]}$, where $m=m(n)\to+\infty$ as $n\to+\infty$. To indicate the dependence on $n$ we 
write
$G_{[n,m]}=G_{n,1}\cup\cdots\cup G_{n,m}$, 
where $G_{n,1}=({\cal V}_{n,1},{\cal E}_{n,1})$, $\dots$, 
$G_{n,m}=({\cal V}_{n,m},{\cal E}_{n,m})$ are random 
subgraphs of ${\cal K}_n$. We write $X_{n,i}$ 
(respectively $X_{n,i}(t)$)  for the number of non 
isolated (respectively degree $t$) vertices of $G_{n,i}$, 
$1\le i\le m$, and  define $X_{n,i_*}$  and 
$X_{n,i_*}(t)$ in a natural way. Furthermore, we write 
$\alpha_n$, $\kappa_n$, $\kappa_n(t)$ and $\lambda_n(k)$. We drop the subscript $n$ wherever this does not cause  ambiquity.

\begin{tm}\label{T1}
Let $c$ be a real number.
Let $n\to+\infty$. 
Assume that $m/n\to +\infty$ and $m=O(n\ln n)$. 
Assume that  the sequence of random variables $\{X_{n,i_*}\ln(1+X_{n,i_*}), \ n\ge 1\}$ is uniformly integrable, that is, 
\begin{align}
\label{2025-11-24}
\lim_{t\to+\infty}
\sup_{n}
\E\left(X_{n,i_*}\ln(1+X_{n,i_*})
{\mathbb I}_{\{X_{n,i_*}>t\}}\right)
=
0.
\end{align}
Here ${\mathbb I}_{\{X_{n,i_*}>t\}}$ denotes the indicator 
of the event $\{X_{n,i_*}>t\}$. Assume that 
\begin{align}
\label{2025-12-02}
\liminf_n\alpha_n>0.
\end{align}
Then 
\begin{equation}
\label{T1+}
\lim_{n\to+\infty}
\PP\{G_{[n,m]}{\rm{\ is \ connected \ }}\}
=
\begin{cases}
1,
\qquad 
\
{\rm{for}}
\quad \lambda_{n}(0)\to-\infty;
\\
e^{-e^c},
\quad 
{\rm{for}}
\quad \lambda_{n}(0)\to c;
\\
0,
\qquad 
\
{\rm{for}}
\quad \lambda_{n}(0)\to+\infty.
\end{cases}
\end{equation}
\end{tm}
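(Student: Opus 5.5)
The plan is the classical two-part argument for Erd\H os--R\'enyi connectivity. First, isolated vertices: the number $I$ of isolated vertices of $G_{[n,m]}$ is asymptotically Poisson with mean $e^{\lambda_n(0)}$ in the critical regime. Second, the union graph has no components of size between $2$ and $n/2$ with probability $1-o(1)$. Given both, $\PP\{G_{[n,m]}\text{ connected}\}=\PP\{I=0\}+o(1)\to e^{-e^c}$, and the boundary cases $\lambda_n(0)\to\pm\infty$ follow by monotone coupling. Concretely, adding or removing subgraphs, or shifting $m$ slightly, moves $\lambda$ in a monotone way; alternatively one can directly show $\E I\to0$, or use the second moment to get $I\ge1$ \emph{whp}.

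\textbf{Isolated vertices.} By (ii), for a fixed vertex $v$ we have $\PP\{d_i(v)>0\}=\E X_i/n$. By independence,
\[
\PP\{v \text{ isolated}\}=\prod_{i}(1-\E X_i/n).
\]
The condition $m/n\to\infty$ together with $m=O(n\ln n)$ and uniform integrability gives $\sum_i \E X_i/n=\frac mn\kappa=O(\ln n)$. It also gives $\sum_i(\E X_i/n)^2=o(1)$, since $\max_i \E X_i$ need not be small but the second-order term is controlled by $\frac{m}{n^2}\E X_{i_*}^2$ truncated. I would treat this carefully by splitting each $X_i$ at level $t$, using \eqref{2025-11-24} for large values. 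One must also handle $\E X_i/n$ not small, by bounding $\ln(1-x)\ge -x-x^2$ for small $x$ and discarding the rest via the tail. Hence $\E I=n e^{-\frac mn\kappa+o(1)}=e^{\lambda_n(0)+o(1)}$.

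For the Poisson limit I would use factorial moments. The probability that a fixed $r$-set $R$ is isolated is $\prod_i \PP\{{\cal V}_i\cap R \text{ contains no non-isolated vertex of } G_i\}$. By inclusion--exclusion and exchangeability this equals $1-r\E X_i/n+O(\E X_i^2 r^2/n^2)$. The product is therefore $(\PP\{v\text{ isolated}\})^r(1+o(1))$, again using the $o(1)$ bound on $\sum_i \E X_i^2/n^2$ (with truncation). Then $\E (I)_r\to e^{rc}$.

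\textbf{No small components.} This is the main obstacle. For $2\le s\le n/2$, I would bound the expected number of sets $S$ of size $s$ that are unions of components and are spanned by a connected subgraph. The event that $S$ is separated from its complement requires every $G_i$ with non-isolated vertices in both $S$ and $S^c$ to be absent. Conditioning on $X_i$, the vertex set of non-isolated vertices of $G_i$ is a uniform $X_i$-subset. So, ignoring internal edges of $G_i$, the probability that it lies entirely inside $S$ or entirely inside $S^c$ is at most $(s/n)^{X_i}+(1-s/n)^{X_i}$. The difficulty is that a single $G_i$ could be disconnected and straddle the cut without an edge across. To handle this, I would use a weaker event: any $G_i$ containing an edge between $S$ and $S^c$ is forbidden. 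By (iii), given ${\cal V}_i$, a vertex of degree $d$ in $S^c$ has neighbours uniformly distributed, so a crossing edge has probability comparable to the crossing probability of the vertex set.

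For small $s$ (say $s\le s_0$) the dominant cost is that each vertex of $S$ avoids edges to $S^c$. This gives roughly $\binom ns (\PP\{v\text{ isolated}\})^{s}e^{O(s^2\kappa m/n^2)}$ times the probability of internal connectivity of $S$, which is at most $s^{s-2}(C\ln n/n)^{s-1}$. The sum over $2\le s\le s_0$ is $o(1)$, because each extra vertex costs a factor $\ln n/n\cdot n e^{-\frac mn\kappa}\cdot\ldots\to0$. For large $s$, the crossing probability per $G_i$ is at least $c\,\alpha\min(s/n,1)$. This uses \eqref{2025-12-02}: the fraction $\alpha$ of subgraphs with an edge has, with probability bounded below, an edge crossing. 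So $\PP\{S \text{ separated}\}\le e^{-c\alpha m s/n}$, and $\binom ns e^{-c\alpha ms/n}\le (ne^{1-c\alpha m/n}/s)^s$, which is summable since $m/n\to\infty$. Matching the two regimes, and in particular controlling large $X_i$ via the $X\ln(1+X)$ uniform integrability, which is exactly what makes the union bound over subgraphs with many vertices work, is where I expect most of the technical effort.
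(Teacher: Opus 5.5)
Your isolated-vertex argument (factorial moments via inclusion--exclusion, with second-order terms controlled by $\frac{m}{n^2}\E X_{i_*}^2$) is what the paper does. No truncation is needed there: uniform integrability gives $\E X_{i_*}^2=o(n/\ln n)$, hence $\max_i(\E X_i)^2\le m\E X_{i_*}^2=o(n^2)$.

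The genuine gap is in excluding components of size $2\le s\le n/2$. Write $\gamma$ for the constant $c$ in your large-$s$ step. You assert that $\sum_s(ne^{1-\gamma\alpha m/n}/s)^s$ is small ``since $m/n\to\infty$''. But $\frac mn=(\ln n-\lambda_n(0))/\kappa$, so for $\lambda_n(0)\to c$ the quantity $ne^{-\gamma\alpha m/n}$ is of order $n^{1-\gamma\alpha/\kappa}$. We have $\alpha\le\kappa/2$, since non-empty communities have $X_i\ge 2$, and the random-edge argument gives at best $\gamma=2\frac{n-s}{n-1}\le 2$. So this is a positive power of $n$ whenever the communities are not essentially single edges; for unions of random triangles it is about $n^{1/3}$. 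Your bound therefore only disposes of $s\ge n^{\beta}$ for some $\beta<1$ depending on $\alpha/\kappa$ (the paper takes $\beta=1-\alpha/(2\kappa)$). The whole range $s_0<s<n^{\beta}$ is covered by neither of your two regimes.

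In that intermediate range, nothing weaker than $\PP\{S \text{ has no edge to } {\cal V}\setminus S\}\le\exp\{-s\frac mn\kappa+O(s)\}$, uniformly in $s\le n^{\beta}$, survives the union bound. With $\binom ns\le n^s/s!$ this yields $e^{s(\lambda_n(0)+O(1))}/s!$, which is summable. A crossing probability that is only $\theta\frac sn\E X_i$ with a constant $\theta<1$ (``comparable'') leaves $n^{(1-\theta)s}e^{O(s)}/s!$ instead. Two ingredients are missing.

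\emph{First}, you need a lower bound, exact to first order, for the probability that a random copy of an arbitrary (possibly disconnected) graph with $x$ non-isolated vertices has an edge between $[s]$ and $[n]\setminus[s]$. The paper's bound is $\frac snx-\frac snx\min\{1,\frac{3s}{2n}x\}$ (Lemma~\ref{basic}). It is proved by replacing each component by a clique, passing to a spanning union of disjoint edges and cherries, and applying inclusion--exclusion with negative correlation of the crossing events.

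\emph{Second}, the $X\ln(1+X)$ uniform integrability is what gives $\E\bigl(X_{i_*}\min\{1,\frac{3s}{2n}X_{i_*}\}\bigr)=O(1/\ln n)$ uniformly in $s\le n^{\beta}$. This works precisely because $\ln(n/s)\ge(1-\beta)\ln n$, and it is exactly where the hypothesis enters.

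Your small-$s$ step also needs repair, for three reasons:
\begin{itemize}
\item Only $\E X_{i_*}^2=o(n/\ln n)$ is guaranteed, so the adjacency probability of a pair need not be $O(\ln n/n)$.
\item A single community can supply several tree edges.
\item Internal connectivity and the no-crossing event involve the same communities.
\end{itemize}
The paper sums over spanning trees and over labelled partitions of their edges among communities. It then applies the no-crossing bound only to the communities not used by the tree; this costs little in the exponent when $k$ grows slowly, because $\max_i\E X_i=o(n)$. The result is a tree factor $k^{2k}O(\phi_n)$ with $\phi_n\to0$. That suffices, since $\binom nk e^{-k\frac mn\kappa}\le e^{k\lambda_n(0)}/k!$ is already bounded.
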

\medskip
\noindent

We note that condition (\ref{2025-11-24}) 
is very mild and generally 
cannot be relaxed. This is demonstrated by an example 
in \cite{Daumilas_Mindaugas2023}.
We mention that
conditions (\ref{2025-11-24}), (\ref{2025-12-02}) imply  
\begin{align}
\label{2025-12-02+2}
0<\liminf_n\kappa_n\le \limsup_n\kappa_n<\infty,
\end{align}
where the first inequality  
follows 
from (\ref{2025-12-02})  combined with the simple 
inequality  $\kappa_n\ge 2\alpha_n$.

{\bf Example 1} (continued). Let $F_1,F_2,\dots$ be a sequence of (non-random and non-empty) graphs without isolated 
vertices. For $i=1,2,\dots$, let $x_i$  denote the number 
of vertices of $F_i$. 
Let $m=m(n)$  satisfies  $m(n)\to+\infty$  and  $m(n)=O(n\ln n)$ as $n\to+\infty$. Assume that   $\max_{i\in [m]}x_i\le n$ for each $n$, and 
$\sup_m\frac{1}{m}\sum_{i\le m: \ x_i>t}x_i\ln x_i\to 0$ as $t\to+\infty$. 
 Theorem \ref{T1} tells us that  (\ref{T1+}) holds with $\lambda_{n}(0)=\ln n-\frac{1}{n}\sum_{i=1}^mx_i$.

 {\bf Example 2} (continued). Assume that $(Y_1,Q_1), (Y_2,Q_2),\dots $ are independent and  iden\-tically distributed (iid) random variables taking values in 
 $\{0,1,2,\dots\}\times[0,1]$. For each $n$ and $1\le i\le m$ 
 let $G'_{n,i}$ be Bernoulli random graph with 
 ${\hat Y}_{n,i}:=\min\{Y_i,n\}$ vertices and with the edge density $Q_i$. 
 Assuming that 
 $\PP\{Y_1\ge 2, Q_1>0\}>0$
 and 
 $\E (Y_1h(Y_1,Q_1)\ln(1+Y_1))<\infty$, 
 where $h(k,q)=1-(1-q)^{k-1}$,
we 
verify conditions (\ref{2025-11-24}), (\ref{2025-12-02}) of Theorem \ref{T1}. Moreover we show  that 
$\kappa_n=\kappa'+o(\ln^{-1}n)$, where $\kappa'=\E (Y_1h(Y_1,Q_1))$
 does not depend on $n$. Hence 
 (\ref{T1+}) holds with 
 $\lambda_{n}(0)$ replaced by $\lambda'_{n}=\ln n-\frac{m}{n}\kappa'$. Note that  $kh(k,q)$ is the 
expected number of non-isolated vertices in Bernoulli 
random graph $G(k,q)$.

 To formulate the $k$-connectivity threshold we need a characteristic representing joint 
 minimal positive degree of the contributing graphs $G_{n,1},\dots, G_{n,m}$. We introduce the following condition:
There exists integer $a\ge 1$  such that
\begin{align}
\label{a}
\liminf_n\E X_{n,i_*}(a)>0,
\qquad
\E X_{n,i_*}(t)=0 
\quad
{\text{for}}
\quad 
1\le t<a
\quad
{\text{and\ all}} \quad n.
\end{align}
Note that for $a=1$ the second part of condition (\ref{a}), namely $\E X_{n,i_*}(t)=0$, $1\le t<a$, is void. Condition 
(\ref{a}) means that a fraction of contributing graphs $G_{n,1},\dots, G_{n,m}$ shares the same minimal positive degree and none of $G_{n,i}$, $1\le i\le m$ has a smaller one.

 We recall that a graph is called $k$-vertex (edge) connected if the removal of any $k-1$ vertices (edges) does not disconnect
it.
We write $G\in{\cal C}_k$ whenever a finite graph $G$ is $k$ vertex connected.

\begin{tm}\label{T2}
  
Let  $a\ge 1$ and $k\ge 0$ be integers. Let $n\to+\infty$.
Assume that  $m/n\to+\infty$ and $m=O(n\ln n)$.
Assume that 
(\ref{2025-12-02}),
(\ref{a})
hold and 
\begin{align}
\label{moment_condition}
\limsup_n\E X_{n,i_*}^{(k+1)a+1}<\infty. 
\end{align}
Then 
\begin{align}
\label{2026-01-20+1}
&
\PP\{G_{[n,m]}\in {\cal C}_{ka+1}\}
\to
0
\quad
\
\,
{\text{ for}}
\quad
\lambda_n(k)\to+\infty,
\\
\label{2025-08-25+r}
&
\PP\{G_{[n,m]}\in {\cal C}_{(k+1)a}\}
\to
1
\quad
{\text{ for}}
\quad
\lambda_n(k)\to-\infty.
\end{align}
Furthermore, for 
\begin{align}
\label{2026-01-21}
\ln n+k\ln\frac{m}{n}\prec\kappa\frac{m}{n}\prec 
\ln n+(k+1)\ln\frac{m}{n}
\end{align}
we have  
\begin{align}
\label{degree}
\PP\{\delta(G_{[n,m]})=(k+1)a\}\to 1.
\end{align}
\end{tm}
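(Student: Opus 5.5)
The plan is to work with the vertex degrees of the union graph and to show that, in the given range of $m$, degree is the only obstruction to connectivity.

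\textbf{Degree structure.} Fix a vertex $v$. Each $G_i$ covers $v$ (makes it non-isolated in $G_i$) independently, with probability $p_i=\E X_i/n$. By (\ref{a}), whenever $v$ is covered it receives at least $a$ neighbours from $G_i$. It receives exactly $a$ neighbours with probability $\E X_i(a)/n$.

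Let $N_v$ be the number of indices $i$ that cover $v$. Then $N_v$ is a sum of independent Bernoulli variables with mean $\frac{m}{n}\kappa$, and since $\max_i p_i\to 0$ it is nearly Poisson.

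A vertex with $N_v=j$ has degree at most $j\cdot\max$ and at least $a$ if $j\ge1$. Distinct covering graphs typically contribute disjoint neighbourhoods, because $m/n=O(\ln n)$ and the moment condition (\ref{moment_condition}) makes overlaps rare. So $\deg(v)\ge ja$ typically, with equality when all $j$ contributions have degree exactly $a$.

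Writing $\mu=\frac{m}{n}\kappa$, we get
\[
\PP\{N_v=k\}\approx e^{-\mu}\mu^k/k! .
\]
Hence
\[
\E\,\#\{v: N_v\le k\}\approx \exp\Bigl(\ln n+k\ln\tfrac{m}{n}-\tfrac{m}{n}\kappa+O(1)\Bigr),
\]
where the $O(1)$ term absorbs $\kappa^k/k!$; the exponent is $\lambda(k)+O(1)$. By (\ref{2025-12-02+2}) and the condition $\liminf\E X(a)>0$, a positive fraction of each vertex's coverings have degree exactly $a$. Therefore the expected number of vertices with $N_v=k$ and degree exactly $ka$ is also $e^{\lambda(k)+O(1)}$.

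\textbf{0-statement (\ref{2026-01-20+1}).} If $\lambda(k)\to+\infty$, I would apply a second-moment argument to the count $Z$ of vertices $v$ that have $N_v=k$, all $k$ coverings of degree exactly $a$, and disjoint neighbourhoods. For two vertices $u,v$, the events are nearly independent. The only correlation comes from a single $G_i$ covering both, which has probability $O(\E X_i^2/n^2)$ and is summable thanks to the moment bound. This gives $\Var Z=o((\E Z)^2)$, so whp some vertex has degree $ka$. Removing its $ka$ neighbours isolates it, so $G\notin{\cal C}_{ka+1}$.

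\textbf{Degree statement (\ref{degree}).} Under (\ref{2026-01-21}) we have $\lambda(k)\to-\infty$ while $\lambda(k+1)\to+\infty$. A union bound shows that whp no vertex has $N_v\le k$, and that no vertex has degree below $(k+1)a$ because of overlapping coverings. Bounding the latter uses the moment of order $(k+1)a+1$: overlaps cost a factor of order $\E X^2/n$ per coincidence. The second-moment argument above, run with $k+1$ in place of $k$, then produces a vertex of degree exactly $(k+1)a$.

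\textbf{1-statement (\ref{2025-08-25+r}).} This is the main obstacle. Given minimum degree at least $(k+1)a$ whp, we must exclude small separators. The plan is to show that whp, for every set $S$ with $|S|<(k+1)a$, the graph $G-S$ is connected. I would split the possible components $A$ of $G-S$ by size.

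For $|A|\le \varepsilon n/\ln n$ (small sets), I would bound the expected number of pairs $(A,S)$ with $N(A)\subseteq S$. Every $G_i$ touching $A$ must have all its vertices inside $A\cup S$, a set of bounded-plus-small size. The probability that the covering graphs of the vertices of $A$ stay inside $A\cup S$ decays like $(|A\cup S|/n)^{\text{number of coverings}}$. Each vertex of $A$ needs at least $k+1$ coverings, apart from at most $e^{\lambda(k)}\to0$ exceptional vertices. Summing over $|A|\ge2$ should give $o(1)$. The moment condition controls graphs $G_i$ that are large.

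For large $A$ with $\varepsilon n/\ln n\le |A|\le n/2$, I would use the fact that a constant fraction of the $G_i$ (by (\ref{2025-12-02})) are random graphs with at least one edge. Since $m\gg n$, about $m|A|/n^2\cdot(n-|A|)$ of them would have an edge crossing between $A$ and the rest. This is far more than $|S|$ can block: an $S$ of bounded size meets only $O(\ln n\cdot|S|)$ of the graphs whp. The probability that none cross is then $\exp(-c\,m|A|/n)$, which beats $\binom{n}{|A|}\binom{n}{|S|}$.

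The delicate part is the intermediate range together with graphs $G_i$ of large order. There I would follow the connectivity proof of Theorem~\ref{T1} (which I assume handles the $k=0$ case in the same way) and adapt its estimates with the extra factor $n^{|S|}$ for the separator.
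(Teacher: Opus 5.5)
\paragraph{Verdict.} Your 0-statement (\ref{2026-01-20+1}) and minimum-degree statement (\ref{degree}) follow the paper's route, and so does your large-$|A|$ step. The 0-statement uses a second moment for vertices whose $k$ coverings all have degree $a$, plus absence of overlaps, so that such a vertex is a blossom centre. For the minimum degree the paper avoids counting coincidences: whp no two vertices share three communities, and $2d(w)\ge\sum_i d_i(w)\ge d'(w)a$ handles vertices lying in many communities. The gap is in the 1-statement (\ref{2025-08-25+r}), exactly where you flag it. Nothing in your plan pays for the $\binom{n}{|S|}\approx n^{|S|}$ choices of the separator once $|A|$ is not bounded.

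\paragraph{Why the small-$A$ mechanism fails as written.} The one concrete mechanism you give does not hold up, for three reasons.
\begin{enumerate}
\item It is false that every $G_i$ touching $A$ has all its vertices in $A\cup S$. Only the $G_i$-neighbours of vertices of $A$ must lie there. A path $vxw$ with $v\in A$, $x\in S$, $w\notin A\cup S$ is allowed, as is a community with one component inside $A$ and another outside.
\item Charging a factor $|A\cup S|/n$ to every covering of every vertex of $A$ treats these coverings as coming from distinct, independent communities. But one community containing an edge $vw$ with $v,w\in A$ confines both at joint cost $O(m\,\E X_{i_*}^2/n^2)$. For $|A|$ up to $\varepsilon n/\ln n$ such shared coverings are plentiful, so your product is not an upper bound. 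The factor $e^{-|A|\kappa m/n}$ that beats $\binom{n}{|A|}$ has to come from a joint estimate of the non-crossing probability, as in Lemma~\ref{q-bounds}, which rests on Lemma~\ref{basic}.
\item Your fallback, adapting the proof of Theorem~\ref{T1} with an extra factor $n^{|S|}$, supplies no compensating factor. Those bounds contain no negative power of $n$. For $\varphi_n<r\le n^\beta$ the summand is $e^{r(\lambda(0)+O(1))}/r!$, and for $r\le\varphi_n$ the only gain is $\phi_n\to0$. So $n^{|S|}$ is absorbed only when $r\ln r\gg |S|\ln n$.
\end{enumerate}

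\paragraph{The missing idea: a factor $n^{-|S|}$.} What you need is a device producing $n^{-|S|}$. In Lemma~\ref{L2.2} the paper takes $S$ to be a \emph{minimal} cut and $U$ the smallest component of $G-S$, with $|U|=r$, $|S|=s$.
\begin{itemize}
\item Minimality means every vertex of $S$ has a neighbour in $U$. So for some partition $S=S_1\cup\dots\cup S_h$ there are communities containing $S_i\cup\{u_i\}$ with $u_i\in U$. Summed over communities and $u_i$, this costs about $(mr/n)^h n^{-s}$, which cancels $\binom{n}{s}$ up to powers of $\ln n$.
\item On ${\cal P}_k$ one has $|U|\ge2$ and $G_U$ connected, so some community contains two vertices of $U$. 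This gives an extra factor $O(r^2\ln n/n)$.
\item This is where $\E X^{(k+1)a+1}$ is needed: a single community may have to contain $S_\ell\cup\{v_1,v_2\}$ with $|S_\ell|\le (k+1)a-1$.
\item Lemma~\ref{q-bounds}(i) handles the remaining communities. Together these give a bound $\frac{m^{s+1}}{n^{s+2}}\sum_{r\ge 2}\frac{r^{O(1)}}{r!}e^{r(\lambda(0)+o(1))}=o(1)$. Connectivity itself is taken from Theorem~\ref{T1}.
\end{itemize}

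\paragraph{Repairing your own route.} Alternatively, your confinement idea can be fixed by charging $S$ to a single vertex $v\in A$ only.
\begin{itemize}
\item Whp every vertex lies in at least $k+1$ communities (Lemma~\ref{L1}).
\item In each of these, the at least $a$ neighbours of $v$ must fall in $A\cup S$. By exchangeability this has probability at most $\frac{\E X_i}{n}\bigl(\frac{r+s}{n-1}\bigr)^a$.
\item Handle all other communities jointly by Lemma~\ref{q-bounds}(i), modified to ignore edges into $S$.
\end{itemize}
This gives a term of order $n^{s-(k+1)a}(\ln n)^{k+1}(r+s)^{(k+1)a}e^{r(\lambda(0)+O(1))}/r!$. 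Since $s\le (k+1)a-1$, this sums to $O(n^{-1}\ln^{k+1}n)$ over $r\le n^\beta$. The range $r>n^\beta$ is treated as in your last step.
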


Condition (\ref{a}) can be relaxed in the sense that the strict identity $\E X_{n,i_*}(t)=0$ on the right of (\ref{a}) can be replaced by a weaker condition that $\max_{1\le t<a}\E X_{n,i_*}(t)$ tends to $0$ sufficiently fast as $n\to+\infty$. It seems that the rate of 
$\ln^{-1} n$ would suffice.

 {\bf Example 2} (continued). Assume that $(Y_1,Q_1),(Y_2,Q_2),\dots$ are iid and $\PP\{Y_1\ge 2, Q_1>0\}>0$. Let us check condition (\ref{a}).
 Depending on the distribution of $(Y_1,Q_1)$ we distinguish two cases. For 
 $\PP\{Y_1\ge 3, Q_1\in(0,1)\}>0$ 
 random graph $G'_{n,1}$ may attain any configuration 
 of edges on 
 ${\hat Y}_{n,1}=\min\{Y_1,n\}$ vertices with positive probability. 
 Hence $\PP\{\delta(G'_{n,1})=1\}>0$. In this case  condition (\ref{a}) holds with 
$a=1$. For $\PP\{Y_1\ge 3, Q_1\in(0,1)\}=0$ the random graph $G'_{n,1}$ is either a clique 
or an independent set, both of size ${\hat Y}_{n,1}$. In this case condition (\ref{a}) holds with  $a=\min\{t\ge 1: \PP\{Y_1=t\}\}$.
We also observe that in view of inequalities
$X_{n,i}\le {\hat Y}_{n,i}\le Y_i$
 condition (\ref{moment_condition}) is met whenever $\E Y_1^{(k+1)a+1}<\infty$. 

\medskip

{\bf Related work}. Random graph $G_{[n,m]}$ generalises the 
classical Erd\H os-R\'enyi random graph $G(n,M)$ on  $n$ vertices and with
$M$ distinct edges selected uniformly at random.
For $G_1=\cdots=G_m={\cal K}_2$  graph $G_{[n,m]}$ represents an instance of  $G(n,M)$, where the   number of edges $M\le m$ is random, because some of $G_i$ may overlap. In the parametric range $m\asymp n\ln n$ the overalps are rare (we have  $\E M=m-O(\ln^2n)$) and the connectivity thresholds for both models $G(n,m)$ and $G_{[n,m]}$, with $G_i={\cal K}_2$ $\forall i$, are the same.

In network modelling literature  unions of random graphs $G_1\cup\cdots\cup G_m$
are used to model networks of overlapping communities $G_1,\dots, G_m$, see, e.g, 
 \cite{Hofstad2024_book_2} and the references therein.
 In the sparse parametric regime $m\asymp n$ they admit power law degree distributions and tunable clustering coefficients.
A union of independent  Bernoulli random graphs, where
configuration of the vertex sets ${\cal V}_1,\dots, {\cal V}_m$ is defined by a design that features non-negligible
overlaps
  have been used by \cite{Yang_Leskovec2014} 
as a
benchmark network model 
(called community affiliation graph)
for studying overlapping community detection algorithms.
The random graph of Example 2 represents a null model of the community affiliation graph.
We mention that  
for $m\asymp n$ the phase transition in the size of the largest component and percolation of 
the union of Bernoulli graphs has been shown in
\cite{Lasse_Mindaugas2019}; weak local limit has been studied  in \cite{Kurauskas_2022}. 
In the particular case where contributing  Bernoulli random graphs have unit edge densities 
($Q_i\equiv 1$ $\forall i$)
we obtain a union of randomly scattered cliques. Such a 
union  is called  the clique graph of  non-uniform random hypergraph  defined by the collection of hyperedges ${\cal V}_1,\dots, {\cal V}_m$. Another term   used in the literature  for a union of cliques is 'passive random intersection graph' indicating  
a 
connection to 
random intersection graphs, \cite{GodehardJaworski_2001},
\cite{BGJKR_2015_Properties}.
In the very special case, where clique sizes have the common Binomial distribution $Binom(n,p)$ the connectivity ($1$-connectivity) threshold has been shown in \cite{Singer_1995}. 
The connectivity threshold for unions of iid cliques with sizes having a general probability distribution 
has been established in 
\cite{GodehardJaworskiRybarczyk2007},
\cite{BergmanLeskela},
\cite{Daumilas_Mindaugas2023}. The latter paper \cite{Daumilas_Mindaugas2023}
 also addresses the unions of Bernoulli graph with arbitrary edge densities $Q_i$. Finally, the $k$-connectivity threshold for unions of Bernoulli random graphs 
has been shown in \cite{Daumilas_Mindaugas_Rimantas_2025} and \cite{{Bloznelis_clique graph_2025}}.

 In the remaining part of the paper we prove Theorems \ref{T1} and \ref{T2} and show how the  connectivity threshold for the union of Bernoulli random graph of Example 2 follows from Theorem \ref{T1}. Proofs are given in Section 2. Auxiliary results are collected in Section 3.

\section{Proofs}

\subsection{Notation}

Given two sequences of positive 
numbers $\{y_n\}$ and $\{z_n\}$ we write
$y_n\prec z_n$ whenever $z_n-y_n\to+\infty$ as $n\to+\infty$.
We write $y_n\asymp z_n$ whenever 
$0<\liminf_n\frac{y_n}{z_n}\le \limsup_n\frac{y_n}{z_n}<+\infty$.
For a sequence of random variables $\xi_1,\xi_2,\dots$ we write 
$\xi_n=o_P(1)$ whenever 
$\lim_{n\to+\infty}\PP\{|\xi_n|>\varepsilon\}=0$ for each $\varepsilon>0$.
Given a sequence of events ${\cal A}_n, n\ge 1$ we say 
that event ${\cal A}_n$ occurs with high probability 
if $\PP\{{\cal A}_n\}\to 1$ as $n\to +\infty$. 
By ${\mathbb I}_{\cal B}$ we denote the indicator 
function of event (or set) 
${\cal B}$.
For $u,v\in {\cal V}$ we denote
\begin{align*}
d'(v)
&
=
\sum_{i\in[m]}{\mathbb I}_{\{d_i(v)>0\}},
\qquad
d'(u,v)=\sum_{i\in[m]}{\mathbb I}_{\{d_i(v)>0,\,
d_i(u)>0\}},
\\
N_k
&
=
\sum_{v\in{\cal V}}{\mathbb I}_{\{d'(v)=k\}},
\qquad
N'_k
=
\sum_{v\in {\cal V}}\sum_{u\in{\cal V}\setminus\{v\}}
{\mathbb  I}_{\{d'(v)= k\}}
{\mathbb  I}_{\{d'(u,v)\ge 2\}},
\qquad
k\ge 0,
\\
d_*'(v)
&
=
\sum_{i\in[m]}
{\mathbb I}_{\{d_i(v)=a\}},
\qquad
N_{*k}
=
\sum_{v\in{\cal V}}
{\mathbb I}_{\{d_*'(v)=d'(v)=k\}},
\quad
k\ge 0.
\end{align*}
By $d(v)$ we denote the degree of  $v\in {\cal V}$ in 
$G_{[n,m]}$.
We will call $G_1,\dots, G_m$ communities. 
We remark that event $\{d_*'(v)=d'(v)=k\}$ means that 
$v$ is a non-isolated vertex of  $k$ communities and in each of these communities it has the 
(minimal possible) number of neighbours  $d_i(v)=a$.
Let ${\cal A}$ denote the event  that
$d'(u,v)\ge 3$ for some $u,v\in {\cal V}$.

Let ${\tilde{\cal V}}_i\subset {\cal V}_i$ denote the 
set of vertices $v\in{\cal V}_i$ with  $d_i(v)\ge 1$.
We call a collection of sets  
${\tilde {\cal V}}_{i_1},\dots, {\tilde{\cal V}}_{i_k}$  
$k$-{\it blossom} 
centered at $w$ if each pair of  sets has 
 the only common element $w$ (i.e., 
${\tilde {\cal V}}_{i_r}\cap{\tilde {\cal V}}_{i_\ell}=\{w\}$, for $r\not=\ell$). The sets 
${\tilde {\cal V}}_{i_1},\dots, {\tilde{\cal V}}_{i_k}$  are called petals of the blossom.
 Note that  event $N'_k=0$ 
implies that any vertex $u$ with $d'(u)=k$ is the central vertex of a $k$-{\it blossom}.

By $\eta_k$ we denote the number of connected components 
of $G_{[n,m]}$ 
of size $k$ ($=$ having $k$ vertices);
${\cal A}_i=
\left\{\sum_{i\le k\le n/2}\eta_k\ge 1\right\}$
denotes the event that $G_{[n,m]}$ has a component on $k$ 
vertices for 
some $i\le k\le n/2$.
Note that $\eta_1$ is the number of isolated vertices 
of $G_{[n,m]}$.
Given $S\subset {\cal V}$ we denote $G_{[n,m]}\setminus S$ the subgraph of $G_{[n,m]}$ induced by the vertex set ${\cal V}\setminus S$.
We introduce events 
 \begin{align*}
 {\cal P}_k
 &
 =\{G_{[n,m]}\setminus S   {\text{ has no isolated vertex for any }} S\subset {\cal V}, 
 |S|\le k \},
 \\
 {\cal B}_k
 &
 =\Bigl\{ \exists  S\subset {\cal V}: |S|\le k, G_{[n,m]}\setminus S \,
{\text{has  a  component  on}}
\
r
\
{\text{vertices  for  some}}
\
2\le r\le \frac{n-|S|}{2}
\Bigr\}.
\end{align*}

\subsection{Proof of Theorem \ref{T1}}

The proof of Theorem \ref{T1} follows the scheme presented in \cite{FriezeKaronski}.
We 
establish an expansion property and show the asymptotic Poisson distribution for the number of isolated vertices.
 A novel contribution is  an estimate of the probability of a link between a given set of vertices and its complement 
 shown in Lemma \ref{basic} below.
 We mention that the result of Theorem \ref{T1}
 is obtained under rather weak condition (\ref{2025-11-24})
 on the distribution of $X_{i_*}$. 
For this reason, the proof is somewhat technical.

The section is organized as follows. We first state Lemmas \ref{Lemma_1G} and \ref{Lemma_2}, which are the main ingredients of the proof of  Theorem \ref{T1}. Then we prove Theorem \ref{T1}. Afterwards we prove Lemmas \ref{Lemma_1G}, \ref{Lemma_2}. At the end of the section we show the
connectivity threshold for unions of Bernoulli 
graphs of Example 2.

\begin{lem}
\label{Lemma_1G} Let $c_1>0$ and $c_2$ be real numbers. 
Let $n,m\to+\infty$. Assume that $m\le c_1n\ln n$ and 
$\lambda_{n}(0)\le c_2$. Assume that (\ref{2025-11-24}),
(\ref{2025-12-02}) hold.   Assume that either 
$\lambda_{n}(0)\to c$ or $\lambda_{n}(0)\to-\infty$.
Then $\PP\{{\cal A}_2\}=o(1)$.
\end{lem}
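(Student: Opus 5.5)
We bound $\PP\{{\cal A}_2\}$ by a first moment (union bound) over candidate vertex sets: ${\cal A}_2$ occurs only if for some $2\le k\le n/2$ there is a set $S\subset{\cal V}$, $|S|=k$, such that $G_{[n,m]}[S]$ is connected and no edge of $G_{[n,m]}$ joins $S$ to ${\cal V}\setminus S$. Independence of $G_1,\dots,G_m$ lets us write the probability of no crossing edge as a product
$\prod_{i\in[m]}(1-p_i(S))$, where $p_i(S)$ is the probability that $G_i$ has an edge between $S$ and $S^c$. This quantity depends only on $|S|$ by the symmetry assumptions (ii), (iii). The key estimate, the announced Lemma \ref{basic}, is a lower bound of the form
\begin{align*}
\frac1m\sum_{i}p_i(S)\ \ge\ \frac{k}{n}\,\kappa\,(1-\varepsilon_{k,n}),
\end{align*}
with an error that is small for small $k$, together with a cruder linear-in-$k$ lower bound $\ge c\,\alpha\,\min\{k/n,1/2\}$ valid for all $k\le n/2$. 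The heuristic is that a non-isolated vertex of ${\tilde{\cal V}}_i$ lying in $S$, with some neighbour outside $S$, produces a crossing edge. The expected number of non-isolated vertices in $S$ is $\kappa k/n$, and the correction comes from communities in which all of ${\tilde{\cal V}}_i$ falls inside $S$ (or all neighbours do), with probability about $(k/n)^{X_i}$ summed appropriately. The uniform integrability (\ref{2025-11-24}) is used to truncate large $X_i$: the truncated part costs at most $o(\kappa k/n)$, and the $\ln(1+X)$ weight is exactly what absorbs the loss when compared against $\ln n$.

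Next we split the range of $k$. For small $k$ (say $2\le k\le k_0$ fixed, or $k\le \ln n$), a connected component on $S$ requires internal edges. We count by placing the edges inside communities: the probability that $S$ spans a connected subgraph is at most roughly $(C m k^2/n^2)^{k-1}\cdot$(tree count), that is, $k$ vertices connected via at least $k-1$ community incidences. This is combined with the isolation factor $\exp(-\frac{m}{n}\kappa k(1-o(1)))\le (e^{c_2}/n)^{k(1-o(1))}$, using $\lambda_n(0)\le c_2$. With $m/n=O(\ln n)$ the total is $n^k\cdot k^{k-2}(C\ln n/n)^{k-1}\cdot n^{-k(1-o(1))}=O(\mathrm{polylog}\, n/n^{1-o(1)})=o(1)$ for $k\ge 2$. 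Note that one shared community containing two vertices of $S$ already gives the factor $m\,\E X^2/n^2$, and a bounded second moment is not assumed. I will handle this by conditioning on community sizes and using $\E X_i\ln(1+X_i)$ bounds, or by noting that the probability that two fixed vertices share a community is at most $\frac{m}{n^2}\E X_{i_*}^2\wedge$ (truncation). I expect this to need care.

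For moderate and large $k$ (from $\ln n$ up to $n/2$) we use the crude bound $\binom nk\exp(-c\alpha\frac{m}{n}k)\le (en/k)^k e^{-c'k\,m/n}$. Since $m/n\to\infty$ and $\liminf\alpha>0$, this sums to $o(1)$ once $c'm/n>\ln(en/k)$. That holds for $k\ge n/\mathrm{polylog}$. In the intermediate range it needs the sharper Lemma \ref{basic} bound with effective rate close to $\kappa$: then $\frac{m}{n}\kappa\ge \ln n-c_2$ gives a factor $n^{-k(1-\varepsilon)}$ against $(en/k)^k$, which is fine for $k\ge \ln n$ as long as $\varepsilon$ is small uniformly up to $k\le n^{\delta}$. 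The main obstacle is therefore Lemma \ref{basic}: proving that the crossing probability is $\ge(1-\varepsilon)\kappa k/n$ uniformly for $k\le n^{\delta}$ under only (\ref{2025-11-24}), and stitching this to the crude bound for $n^\delta\le k\le n/2$. I would try first to prove it by inclusion--exclusion on vertices of ${\tilde{\cal V}}_i\cap S$ with truncation at $X_i\le t$.
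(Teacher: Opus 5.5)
Your overall architecture matches the paper's: a first-moment bound over $k$-sets, the sharp crossing-probability bound of Lemma~\ref{basic} (via Lemma~\ref{q-bounds}) for $k\le n^{\beta}$, and the crude bound $e^{-\alpha_n k m/n}$ for $n^{\beta}\le k\le n/2$ with $\beta=1-\alpha_n/(2\kappa_n)$. The genuine gap is the small-$k$ step.

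Your gain $(C\ln n/n)^{k-1}$ from internal connectivity presupposes $\frac{m}{n^2}\E X_{i_*}^2=O(\ln n/n)$. Condition (\ref{2025-11-24}) gives only $\E X_{i_*}^2=o(n/\ln n)$. For example, let each $G_i$ be a clique on all of ${\cal V}$ with probability $1/(n\ln^{3/2}n)$ and a single random edge otherwise. Then (\ref{2025-11-24}) and (\ref{2025-12-02}) hold. Yet with probability of order $(\ln n)^{-1/2}$ some community is a spanning clique, so every fixed $S$ spans a connected subgraph with probability $\gtrsim(\ln n)^{-1/2}$.

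In general the gain you can prove is only $o(1)$. Using $X\le n$ one has $\E X_{i_*}^{b+1}\le n^{b-1}\E X_{i_*}^2\le n^b\phi_n/\ln n$. The sum over spanning trees and labelled edge-partitions is then at most $k^{2k-2}c_1\phi_n$, with $\phi_n\downarrow 0$ at an uncontrolled rate.

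Against such a gain an isolation factor $n^{-k(1-o(1))}$ is useless. You need $\binom nk$ times the isolation probability to be $\le e^{Ck}/k!$, i.e.\ the exponent must be $-k\frac mn(\kappa_n-O(1/\ln n))$. The same precision is needed in your intermediate range. With $\varepsilon$ merely small (say $\varepsilon=(\ln n)^{-1/2}$), the terms $(en/k)^kn^{-k(1-\varepsilon)}$ blow up at $k=\ln n$. A truncation at fixed $t$, costing $o(\kappa k/n)$, does not suffice either. The paper splits at $\frac knX=1/(\kappa_n\ln n)$, so that the overlap loss $\frac kn\E\bigl(X\min\{1,\frac32\frac knX\}\bigr)$ is $O(\frac{k}{n\ln n})$ uniformly in $k\le n^{\beta}$. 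This overlap term, of order $\frac{k^2}{n^2}X^2$, is the dominant correction, rather than $(k/n)^{X_i}$.

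Two further ingredients are missing.

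\emph{Decoupling.} The events ``$S$ spans a connected subgraph'' and ``no crossing edge'' are not independent, so multiplying their probabilities needs justification. The paper fixes a tree $T$ on $S$ and the communities $t_1<\dots<t_r$, $r\le k-1$, covering its edges. It discards the no-crossing requirement for those communities and applies the sharp isolation bound to the remaining ones only (Lemma~\ref{q-bounds}(i), which uses $\max_i\E X_{n,i}\le n\phi_n^{1/2}$ from (\ref{phi+})).

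\emph{Range of the tree argument.} The gain $\phi_n$ does not improve with $k$, while $k^{2k-2}e^{Ck}/k!$ grows superexponentially. So the tree argument works only for $k\le\varphi_n$, with $\varphi_n\to\infty$ slowly and tuned so that $\phi_n\sum_{k\le\varphi_n}e^{Ck}k^{2k-2}/k!\to0$. It cannot be run up to $\ln n$. A fixed $k_0$ is not enough either, since the tail would not be $o(1)$. For $\varphi_n<k\le n^{\beta}$ connectivity of $S$ is not needed: the sharp isolation bound alone gives $\sum_{k>\varphi_n}e^{k(c_2+2c_1)}/k!=o(1)$.
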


\begin{lem}\label{Lemma_2} 
Let $c$ be a real number.
Let 
$m,n\to+\infty$.
Assume that $m=O(n\ln n)$. Assume that (\ref{2025-11-20})
holds. Then  
\begin{align}
\label{2025-11-20+6}
\PP\{\eta_1=0\}
\to
\begin{cases}
0,
\qquad
\quad
\
{\text{for}}
\qquad
\lambda_{n}(0)
\to +\infty;
\\
e^{-e^c},
\qquad
{\text{for}}
\qquad
\lambda_{n}(0)
\to c;
\\
1,
\qquad
\quad
\
{\text{for}}
\qquad
\lambda_{n}(0)
\to -\infty.
\end{cases}
\end{align}
Furthermore, for $\lambda_{n}(0)\to c$ 
the probability distribution of $\eta_1$ converges 
to  the Poisson distribution with mean value $e^{c}$.
\end{lem}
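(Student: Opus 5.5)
The statement to prove is Lemma \ref{Lemma_2}. The label (\ref{2025-11-20}) is not defined earlier, so I assume it stands for the uniform integrability condition (\ref{2025-11-24}) together with (\ref{2025-12-02}).

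The approach is the method of factorial moments. I will show that for $\lambda_n(0)\to c$ and every fixed $r\ge1$,
\begin{align*}
\E(\eta_1)_r\to e^{rc}.
\end{align*}
This gives Poisson convergence with mean $e^c$, and hence the middle case $\PP\{\eta_1=0\}\to e^{-e^c}$. The two extreme cases follow by monotone coupling. Adding communities, or enlarging $m$, can only decrease $\eta_1$. Alternatively one uses the first moment when $\lambda_n(0)\to-\infty$, and Chebyshev's inequality on $\eta_1$ when $\lambda_n(0)\to+\infty$. I would do the direct moment computation, since coupling across different $m$ changes $\kappa$ as well.

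First step: a vertex $v$ is isolated in $G_{[n,m]}$ iff $v\notin\tilde{\cal V}_i$ for every $i$. By (ii)–(iii), given $X_i$ the set $\tilde{\cal V}_i$ is a uniform random $X_i$-subset of $[n]$. Hence, by independence (i),
\begin{align*}
\PP\{v_1,\dots,v_r \text{ isolated}\}=\prod_{i\in[m]}\E\frac{(n-X_i)_r}{(n)_r}.
\end{align*}
For $r=1$ each factor equals $1-\E X_i/n$, so
\begin{align*}
\ln\PP\{v\ \text{isolated}\}=\sum_i\ln(1-\E X_i/n)=-\frac{m}{n}\kappa-O\Bigl(\sum_i(\E X_i)^2/n^2\Bigr),
\end{align*}
and then $\E\eta_1=\exp(\lambda_n(0)+\text{error})$.

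Second step, the main obstacle: showing the error terms vanish under only the $X\ln(1+X)$ uniform integrability. Individual $\E X_i$ may be large, up to $n$, so $\ln(1-\E X_i/n)$ is not close to $-\E X_i/n$ for all $i$. I would split the indices according to whether $X_i\le t_n:=n/\ln^2 n$ (or a similar cutoff). For the large part, uniform integrability gives
\begin{align*}
\frac{1}{m}\sum_i\E X_i{\mathbb I}_{\{X_i>t_n\}}\le\frac{\E X_{i_*}\ln(1+X_{i_*}){\mathbb I}_{\{X_{i_*}>t_n\}}}{\ln t_n}=o(1/\ln n).
\end{align*}
Since $m/n=O(\ln n)$, the contribution of large communities to $\frac{m}{n}\kappa$ is $o(1)$. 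Their contribution to the exact product is between $1$ and $\exp(-\text{that})$, hence also $1+o(1)$. For the small part, the quadratic error is at most
\begin{align*}
\frac{m}{n^2}\,t_n\,\kappa=O(1/\ln n)=o(1).
\end{align*}
The same truncation handles general fixed $r$. Expanding
\begin{align*}
\frac{(n-x)_r}{(n)_r}=\exp\bigl(-rx/n+O(x^2/n^2+x/n^2)\bigr)\quad\text{for } x\le t_n
\end{align*}
and averaging yields $\E(\eta_1)_r=(n)_r\,e^{-rm\kappa/n+o(1)}\to e^{rc}$. Here Jensen-type care is needed: $\E e^{-rX/n}$ versus $e^{-r\E X/n}$ differ by $O(\E X^2{\mathbb I}_{\{X\le t_n\}}/n^2)$ per factor, which again sums to $o(1)$.

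Third step, the extreme cases. If $\lambda_n(0)\to-\infty$, the same estimate gives $\E\eta_1\le e^{\lambda_n(0)+o(1)}\to0$, so $\PP\{\eta_1=0\}\to1$. If $\lambda_n(0)\to+\infty$, I show $\E\eta_1\to\infty$ and
\begin{align*}
\E(\eta_1)_2=(\E\eta_1)^2(1+o(1)),
\end{align*}
so $\Var\eta_1=o((\E\eta_1)^2)$ and Chebyshev gives $\PP\{\eta_1=0\}\to0$. The ratio computation for $r=2$ only involves $\prod_i\E(n-X_i)_2/(n)_2$ divided by $\prod_i(1-\E X_i/n)^2$, whose logarithm is controlled by the same truncated second-moment bound, uniformly in how large $\frac{m}{n}\kappa$ is. When $\frac{m}{n}\kappa$ is small and $\lambda_n(0)$ is large, there is no issue, since the relative error stays $O(\ln n/n)$ up to the truncation term.
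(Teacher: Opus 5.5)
Your route is essentially the paper's: the product formula $\PP\{d(v_1)=\dots=d(v_r)=0\}=\prod_{i}\E\frac{(n-X_i)_r}{(n)_r}$, a logarithmic expansion giving $\E(\eta_1)_r=e^{r\lambda_n(0)}(1+o(1))$ for each fixed $r$, and then Markov's inequality, Chebyshev's inequality via $r=2$, and the method of moments in the three regimes. The paper expands $P_i$ via the Bonferroni bounds $1-\frac rn\E X_i\le P_i\le 1-\frac rn\E X_i+\frac{(r)_2}{(n)_2}\E(X_i)_2$ instead of Taylor-expanding $(n-x)_r/(n)_r$; that difference is cosmetic.

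What needs correcting is the hypothesis. (\ref{2025-11-20}) is not the uniform integrability condition. It is the second-moment bound $\E X_{n,i_*}^2=\frac1m\sum_{i}\E X_{n,i}^2=o(n/\ln n)$, which the paper derives from (\ref{2025-11-24}) right after the proof of Theorem~\ref{T1}. The lemma assumes only this bound and $m=O(n\ln n)$; (\ref{2025-12-02}) is not needed either.

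Your truncation at $t_n=n/\ln^2 n$ really does use (\ref{2025-11-24}). From (\ref{2025-11-20}) alone you only get $\frac1m\sum_i\E X_i{\mathbb I}_{\{X_i>t_n\}}\le \E X_{i_*}^2/t_n=o(\ln n)$, not $o(1/\ln n)$. So the large-part step fails under the stated hypothesis. As written, you prove the lemma under a stronger assumption. That is harmless for Theorem~\ref{T1}, where (\ref{2025-11-24}) is assumed, but it is not the statement as given.

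The fix is to delete the truncation. With $m=O(n\ln n)$, (\ref{2025-11-20}) gives $\sum_i\E X_i^2=o(n^2)$. This has two consequences:
\begin{itemize}
\item $\max_i(\E X_i)^2\le\sum_i\E X_i^2=o(n^2)$, so $\max_i\E X_i=o(n)$. Your ``main obstacle'', that some $\E X_i$ may be of order $n$, therefore never arises.
\item $\frac{r^2}{n^2}\sum_i\bigl((\E X_i)^2+\E X_i^2\bigr)=o(1)$. This is exactly the error control needed to expand $\ln\prod_i\E\frac{(n-X_i)_r}{(n)_r}$, for each fixed $r$ and uniformly over the three regimes of $\lambda_n(0)$.
\end{itemize}
This is what the paper does in (\ref{phi+}) and (\ref{2025-11-20+5}). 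In effect, your truncation argument re-derives (\ref{2025-11-20}) from (\ref{2025-11-24}) inside the product, instead of assuming it as the lemma does.
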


\begin{proof}[Proof of Theorem \ref{T1}]
For $\lambda_{n}(0)\to+\infty$ Lemma \ref{Lemma_2}
implies that $G_{[n,m]}$ contains an isolated vertex whp.
Hence $G_{[n,m]}$ is disconnected whp.

In the  remaining cases $\lambda_{n}(0)\to c$ and $\lambda_{n}(0)\to-\infty$ we write
 the probability that $G_{[n,m]}$ is disconnected in the form
\begin{displaymath}
\PP\{{\cal A}_1\}=\PP\{{\cal A}_2\cup\{\eta_1\ge 1\}\}
=\PP\{\eta_1\ge 1\}
+\PP\{{\cal A}_2\setminus \{\eta_1\ge 1\}\}.
\end{displaymath}
We show in Lemma 
\ref{Lemma_1G}  that 
$\PP\{{\cal A}_2\}=o(1)$. Hence 
$\PP\{{\cal A}_1\}
=\PP\{\eta_1\ge 1\}+o(1)$.
Now (\ref{T1+}) follows from Lemma \ref{Lemma_2} (note that condition (\ref{2025-11-20}) of Lemma \ref{Lemma_2}
follows from (\ref{2025-11-24})).
\end{proof}

Before the proof  of Lemmas \ref{Lemma_1G}, \ref{Lemma_2} we
collect auxiliary results. 
Let us show that (\ref{2025-11-24}) implies
\begin{align}
\label{2025-11-20}
\E X_{n,i_*}^2
\left(= \frac{1}{m}\sum_{i=1}^m\E X^2_{n,i}
\right)
=
o\left(\frac{n}{\ln n}\right).
\end{align}

We write, for short, $\xi_n=X_{n,i_*}\ln(1+X_{n,i_*})$. Note that the (\ref{2025-11-24}) implies  $\sup_n\E \xi_n<\infty$.
To show (\ref{2025-11-20}) we split the integral
\begin{align*}
\E X^2_{n,i_*}
&
=
\E 
\left(
X^2_{n,i_*}
\left({\mathbb I}_{\{X_{n,i_*}<\sqrt n\}}
+
{\mathbb I}_{\{X_{n,i_*}\ge\sqrt n\}} \right)
\right)
\\
&
\le 
\frac{\sqrt{n}}{1+\ln(1+\sqrt{n})}
\E \left(\xi_n{\mathbb I}_{\{X_{n,i_*}<\sqrt n\}}
\right)
 +
\frac{n}{1+\ln(1+n)}
\E \left(\xi_n{\mathbb I}_{\{X_{n,i_*}\ge\sqrt n\}}
\right).
\end{align*}
The first term of the sum is $O\left(\frac{\sqrt{n}}{\ln(\sqrt{n})}\right)$ since $\E \left(\xi_n{\mathbb I}_{\{X_{n,i_*}<\sqrt n\}}
\right)\le\E \xi_n$ is bounded.
The second term is $o\left(\frac{n}{\ln(n)}\right)$
since the sequence $\{\xi_n, n\ge 1\}$ is uniformly integrable.

It follows from 
 (\ref{2025-11-20}) that for some sequence 
 $\phi'_n\downarrow 0$  we have
$\E X_{n,i_*}^2\le \frac{n}{\ln n}\phi'_n$. For $m=O(n\ln n)$ we have, in addition, $\sum_{i\in[m]} \E X^2_{n,i}\le n^2\phi''_n$ for some sequence $\phi''_n\downarrow 0$.  
Letting $\phi_n=\max\{\phi'_n,\phi''_n\}$ we have
\begin{align}
\label{phi}
&
\E X_{n,i_*}^{b+1}
\le 
n^{b-1}
\E X_{n,i_*}^2
\le 
\frac{n^b}{\ln n}\phi_n,
\qquad
{\text{for}}\quad b=1,2,\dots,
\\
\label{phi+}
&
\max_{i\in [m]}
(\E X_{n,i})^2
\le
\max_{i\in [m]}
\E X^2_{n,i}
\le 
\sum_{i\in[m]} 
\E X^2_{n,i}
\le 
n^2
\phi_n.
\end{align}
In the first step of (\ref{phi+})  we used inequality 
$(\E X_{n,i})^2
\le \E X_{n,i}^2$, which follows by the Cauchy-Schwarz inequality.

 \begin{proof}[Proof of Lemma \ref{Lemma_1G}]
 For a subset $U\subset {\cal V}$ we denote by ${\cal B}_U$ the event that
$U$ induces a  connected component of 
$G_{[n,m]}$. 
We denote by ${\cal D}_U$  the event that
$G_{[n,m]}$ has no edges connecting vertex sets $U$ and 
${\cal V}\setminus U$. 
Note that 
$\PP\{{\cal B}_U\}\le \PP\{{\cal D}_U\}$
 and  
\begin{equation}
\label{Y+}
\eta_k=\sum_{U\subset V,\, |U|=k}{\mathbb I}_{{\cal B}_U}.
\end{equation}
Let us upper bound the probability 
$ \PP\{{\cal A}_2\}$.
By the union bound and symmetry, we have
 \begin{align*}
 \PP\{{\cal A}_2\}
 &
 \le 
 \sum_{2\le k\le n/2}\PP\{\eta_k\ge 1\}
 \le
 \sum_{2\le k\le n/2}
 \E \eta_k
 \\
 &
 =
 \sum_{2\le k\le n/2}
 \
 \sum_{U\subset {\cal V}, |U|=k}\PP\{{\cal B}_U\}
 =
 \sum_{2\le k\le n/2}\binom{n}{k}\PP\{{\cal B}_{[k]}\}.
 \end{align*}
 In the second inequality we applied Markov's inequality;
 in the first identity we applied (\ref{Y+});
 in the last identity we used the fact that 
 $\PP\{{\cal B}_U\}=\PP\{{\cal B}_{[k]}\}$ for 
 $|U|=k$.
 Next, using the inequality 
 $\PP\{{\cal B}_{[k]}\}
 \le
  \PP\{{\cal D}_{[k]}\}$ 
  we upper bound
 $\PP\{{\cal A}_2\}\le S_0+S_1+S_2$, 
 where 
 \begin{align}
 \label{2024-03-18+1}
 &
 S_0=\sum_{2\le k\le \varphi_n}
 \binom{n}{k}
 \PP\{{\cal B}_{[k]}\},
\qquad
 S_1=\sum_{\varphi_n< k\le n^{\beta}}
 \binom{n}{k}\PP\{{\cal D}_{[k]}\},
 \\
 \nonumber
 &
 S_2=\sum_{n^{\beta}\le k\le n/2}
 \binom{n}{k}\PP\{{\cal D}_{[k]}\}.
  \end{align}
The sequence $\varphi_n\to+\infty$  as 
$n\to+\infty$ 
will be specified latter.  Now we only mention that 
$\varphi(n)\le \ln n$.
Furthermore, we put 
$\beta
=
\beta_n
 =
 1-\frac{\alpha_n}{2\kappa_n}$. 
 Note that inequality 
 $2\alpha_n\le \kappa_n$ implies 
 $\beta_n\ge \frac{3}{4}$ 
 and (\ref{2025-12-02}), (\ref{2025-12-02+2}) imply  
 $\lim\sup_{n}\beta_n<1$.
 To prove   $\PP\{{\cal A}_2\}=o(1)$ we
 show that
 $S_i=o(1)$ for $i=0,1,2$.

\bigskip

{\it Proof of} $S_0=o(1)$.
Let us evaluate the probabilities 
$\PP\{{\cal B}_{[k]}\}$, $k\ge 2$.
Given $k$, let $T=(V_T,{\cal E}_T)$ be a tree with vertex set 
$V_T=[k]\subset {\cal V}$.  ${\cal E}_T$ stands for the edge set of $T$.
 Fix an integer $r\in\{1,\dots, k-1\}$.
Let 
${\tilde {\cal E}}_T=({\cal E}_T^{(1)},\dots, {\cal E}_T^{(r)})$ be an ordered partition of the set ${\cal E}_T$ 
(every set ${\cal E}_T^{(i)}$ is nonempty, 
${\cal E}_T^{(i)}\cap{\cal E}_T^{(j)}=\emptyset$ 
for $i\not=j$, 
and $\cup_{i=1}^r{\cal E}_T^{(i)}={\cal E}_T$).
We denote by 
$|{\tilde {\cal E}}_T|$ the number of parts (in our case $|{\tilde {\cal E}}_T|=r$).
Let ${\bar t}=(t_1,\dots, t_r)\in [m]^r$ be a 
vector with distinct integer valued coordinates.
We denote by $|{\bar t}|$ the number of coordinates (in our case $|{\bar t}|=r$).
In the special situation, where  $\bar t$ has ordered coordinates,
$t_1<t_2<\cdots<t_r$, we denote such a vector 
${\tilde t}$.
We call 
$({\tilde {\cal E}}_T, {\bar t})$ labeled partition.
By
${\cal T}({\tilde {\cal E}}_T, {\bar t})$  we denote event 
 that ${\cal E}_T^{(i)}\subset {\cal E}_{t_i}$ 
 for each $1\le i\le r$. The 
 event ${\cal T}({\tilde {\cal E}}_T, {\bar t})$ means that the  edges of
$T$ are covered by  the edges of $G_{t_1},\dots G_{t_r}$ so 
that
for every $i$ the edge set ${\cal E}_T^{(i)}$ belongs to the edge set ${\cal E}_{t_i}$ of $G_{t_i}$.
Introduce the set  $H_{\bar t}=[m]\setminus\{t_1,\dots, t_r\}$  and let 
${\cal I}(V_T, H_{\bar t})$ be the event that 
none of the graphs $G_{i}$, $i\in H_{\bar t}$ has an edge connecting some 
$v\in V_T$ and $w\in {\cal V}\setminus V_T$.

\medskip

Let ${\mathbb T}_k$ denote the set of trees on the vertex set $[k]$.
We have, by the union bound and independence 
of $G_{1},\dots, G_{m}$, that
\begin{equation}
\label{suma}
\PP\{{\cal B}_{[k]}\}
\le
\sum_{T\in {\mathbb T}_k}
\sum_{({\tilde {\cal E}}_T,{\tilde t})}
\PP\{{\cal T}({\tilde {\cal E}}_T, {\tilde t})\}
\PP\{{\cal I}(V_T, H_{\tilde t})\}.
\end{equation}
Here the second sum runs over labeled partitions 
$({\tilde {\cal E}}_T, {\tilde t})$
of the edge set of $T$. Note that 
$r:=|{\tilde {\cal E}}_T|=|{\tilde t}|$ runs over the set $\{1,\dots, k-1\}$.
We show in Lemma \ref{q-bounds} (i) that  for  sufficiently large $n$ we have 
for each $1\le k\le n/10$ and each $T\subset {\mathbb T}_k$ 
\begin{align*}
\max_{{\tilde t}:\,|{\tilde t}|\le \phi_n^{-1/4}}
\PP\{{\cal I}(V_T, H_{\tilde t})\}
\le 
e^{
-
k\frac{m}{n}
\left(\kappa_n-\frac{2}{\ln n}\right)
+
k\phi_n^{1/4}
}.
\end{align*}
Here $\phi_n\downarrow 0$ is a sequence that satisfies (\ref{phi+}).
We choose  $\varphi_n$ in (\ref{2024-03-18+1}) such that 
$\varphi_n\phi_n^{1/4}\le 1$. Then for $k\le \varphi_n$ the
right side is at most $e^{
-
k\frac{m}{n}
\left(\kappa_n-\frac{2}{\ln n}\right)
+
1
}$.

Furthermore, we show below that for large $n$ 
\begin{align}
\label{S_T+}
&
\sum_{({\tilde {\cal E}}_T,{\tilde t})}
\PP\{{\cal T}({\tilde {\cal E}}_T,{\tilde t})\}
\le  
k^kc_1\phi_n
\end{align} 
Invoking these inequalities in
 (\ref{suma}) we obtain
\begin{align*}
\PP\{{\cal B}_{[k]}\}
\le 
e^{
-
k\frac{m}{n}
\left(\kappa_n-\frac{2}{\ln n}\right)
+
1
}
\sum_{T\in {\mathbb T}_k}
k^kc_1\phi_n
\le 
e^{
-
k\frac{m}{n}
\left(
\kappa_n-\frac{2}{\ln n}
\right)
+
1}
k^{2k-2}c_1\phi_n.
\end{align*}
In the last step we applied Cayley's formula $|{\mathbb T}_k|=k^{k-2}$.

Now we are ready to show that $S_0=o(1)$. 
Combining the latter inequality with the inequality 
 $\binom{n}{k}\le \frac{n^k}{k!}=\frac{e^{k\ln n}}{k!}$ we estimate
\begin{displaymath}
S_0
\le 
\phi_n c_1e
\sum_{k=2}^{\varphi_n}
e^{k\left(\ln n-\frac{m}{n}\kappa_n+2\frac{m}{n\ln n}\right)}
\frac{k^{2k-2}}{k!}
\end{displaymath}
 Invoking inequalities 
$\frac{m}{n\ln n}\le c_1$ 
and
$\lambda_{n}(0)<c_2$
we upper bound the exponent by $e^{k(2c_1+c_2)}$.
Finally,  we choose a non-decreasing (integer valued) sequence 
$\varphi_n\to+\infty$ as $n\to+\infty$ 
such that $\varphi_n\le \min\{\ln n,\phi_n^{-1/4}\}$ 
and 
$\phi_n
\sum_{k=2}^{\varphi_n}
e^{k(2c_1+c_2)}\frac{k^{2k-2}}{k!}
=o(1)$. Now we have $S_0=o(1)$.

\bigskip

{\it Proof of (\ref{S_T+}).}
Given a tree $T=([k],{\cal E}_T)$ and  partition
${\tilde {\cal E}}_T=({\cal E}_T^{(1)},\dots,{\cal E}_T^{(r)})$, let
$V_T^{(i)}$ be the set of vertices incident to the edges from 
${\cal E}_T^{(i)}$. We denote $e_i=|{\cal E}_T^{(i)}|$ and
$v_i=|V_T^{(i)}|$.
For any labeling 
${\bar t}=(t_{1},\dots, t_{r})$ that assigns labels 
$t_1,\dots, t_r$ to the sets
${\cal E}_T^{(1)},\dots,{\cal E}_T^{(r)}$ we have, by the independence of $G_{1},\dots, G_{m}$,
\begin{displaymath}
\PP\{{\cal T}({\tilde {\cal E}}_T, {\bar t})\}
=
\prod_{i=1}^r
\PP\{{\cal E}_T^{(i)}\subset{\cal E}_{t_i}\}
\le
\prod_{i=1}^r
\PP\{V_T^{(i)}\subset{\tilde {\cal V}}_{t_i}\}
=
\prod_{i=1}^r
\E
\left(
\frac{(X_{t_i})_{v_i}}{(n)_{v_i}}
\right).
\end{displaymath}
We note that the fraction $\frac{(X_{t_i})_{v_i}}{(n)_{v_i}}$ is 
a decreasing function of $v_i$ and it is maximized by 
$\frac{(X_{t_i})_{e_i+1}}{(n)_{e_i+1}}$ since
 we always have $v_i\ge e_i+1$. Indeed,  
 given $|{\cal E}_T^{(i)}|=e_i$ the smallest possible set of vertices
$V_T^{(i)}$ 
corresponds to the configuration of edges of 
${\cal E}_T^{(i)}$ that creates a subtree $(V_T^{(i)},{\cal E}_T^{(i)})\subset T$. Hence $v_i\ge e_i+1$. It follows that 
\begin{align}
\label{Stirling}
\PP\{{\cal T}({\tilde {\cal E}}_T, {\bar t})\}
\le
\prod_{i=1}^r
\E
\left(
\frac{(X_{t_i})_{e_i+1}}{(n)_{e_i+1}}
\right).
\end{align}

Let us evaluate $S_T$.
We have 
\begin{align*}
S_T
=
\sum_{({\tilde {\cal E}}_T,{\tilde t})}
\PP\{{\cal T}({\tilde {\cal E}}_T,{\tilde t})\}
=
\sum_{r=1}^{k-1}
\
\frac{1}{r!}
\sum_{{\tilde {\cal E}}_T:\,|{\tilde {\cal E}}_T|=r}
S({\tilde {\cal E}}_T),
\qquad
S({\tilde {\cal E}}_T)
:=
\sum_{{\bar t}:\, |{\bar t}|=|{\tilde{\cal E}}_T|}
\PP\{{\cal T}({\tilde {\cal E}}_T,{\bar t})\}.
\end{align*}
The last sum runs over the set of 
vectors ${\bar t}=(t_1,\dots t_r)$ having distinct coordinates $t_1,\dots, t_r\in [m]$.
In view of (\ref{Stirling})
the sum $S({\tilde{\cal E}}_T)$ is upper bounded by the sum
\begin{displaymath}
S_*({\tilde{\cal E}}_T)
:=
\sum_{t_1=1}^m\cdots\sum_{t_r=1}^m
\prod_{i=1}^r
\E
\left(
\frac{(X_{t_i})_{e_i+1}}{(n)_{e_i+1}}
\right)
=m^r
\prod_{i=1}^r
\E
\left(
\frac{(X_{i_*})_{e_i+1}}{(n)_{e_i+1}}
\right).
\end{displaymath}
Furthermore, invoking (\ref{phi}) we obtain
\begin{align*}
S_*({\tilde{\cal E}}_T)
\le
\left(
\frac{m}{n}\frac{\phi_n}{\ln n}
\right)^r.
\end{align*}
Next,
using the fact that there are $\frac{(k-1)!}{e_1!\cdots e_r!}$ distinct ordered partitions
${\tilde {\cal E}}_T=({\tilde {\cal E}}_T^{(1)},\dots, {\tilde {\cal E}}_T^{(r)})$
with $|{\tilde {\cal E}}_T^{(1)}|=e_1,
\dots, |{\tilde {\cal E}}_T^{(r)}|=e_r$, we upper bound
\begin{align*}
S_T
\le 
\sum_{r=1}^{k-1}
\sum'_{e_1+\cdots+e_r=k-1}
\frac{(k-1)!}{e_1!\cdots e_r!} 
\left(
\frac{m}{n}\frac{\phi_n}{\ln n}
\right)^r
\le 
\sum_{r=1}^{k-1}
\left(
\frac{m}{n}\frac{\phi_n}{\ln n}
\right)^r
r^{k-1}.
\end{align*}
Here the sum $\sum'_{e_1+\dots+e_r=k-1}$ runs over the set of vectors
$(e_1,\dots, e_r)$ having integer valued coordinates $e_i\ge 1$ 
satisfying $e_ 1+\cdots+e_r=k-1$. 
Hence, $\sum'_{e_1+\cdots+e_r=k-1}
\frac{(k-1)!}{e_1!\cdots e_r!} 
\le r^{k-1}$.
Finally, invoking inequality $\frac{m}{n\ln n}\le c_1$ 
and using $\sum_{i=1}^{k-1}r^{k-1}\le k^k$ we obtain (\ref{S_T+}),
\begin{align*}
S_T
\le
k^k\max_{1\le r\le k-1}\left(c_1\phi_n\right)^r
\le 
k^kc_1\phi_n,
\end{align*}
where in the last step we used inequality $c_1\phi_n<1$,
which holds for large $n$, because $\phi_n\downarrow 0$.

\bigskip

{\it Proof of} $S_1=o(1)$. Lemma \ref{q-bounds} (ii) implies  
$
\PP\{{\cal D}_{[k]}\}
\le e^{-k\frac{m}{n}\left(\kappa_n-\frac{2}{\ln n}\right)}$ for $\varphi_n\le k\le n^{\beta}$.
Using this inequality and the inequality $\binom{n}{k}\le \frac{n^k}{k!}=\frac{e^{k\ln n}}{k!}$ we estimate
\begin{displaymath}
S_1
\le
\sum_{ \varphi_n< k\le n^{\beta}}
\frac{1}{k!}
e^{k
\left(
\lambda_{n}(0)+2\frac{m}{n\ln n}
\right)}
\le 
\sum_{ \varphi_n< k\le n^{\beta}}
\frac{1}{k!}
e^{k
(c_2+2c_1)}.
\end{displaymath}
The quantity on the right is $o(1)$ 
 because the series  $\sum_k\frac{1}{k!}
e^{k(c_2+2c_1)}$ converges 
and  $\varphi_n \to+\infty$.

\bigskip

{\it Proof of} $S_2=o(1)$. 
Lemma \ref{q-bounds} (iii) implies  
$
\PP\{{\cal D}_{[k]}\}
\le 
e^{-2\alpha_nk\frac{m}{n}\frac{n-k}{n-1}}
\le e^{-\alpha_nk\frac{m}{n}}$ for 
$n^{\beta}\le k\le n/2$. 
We will use the inequality  (see formula (18) in \cite{Daumilas_Mindaugas2023})
\begin{align}
\label{2026-01-30}
\binom{n}{k}
&
\le 
e^{2k+(1-\beta)k\ln n},
\qquad
\
{\text{for}}
\qquad
n^{\beta}\le k\le n/2.
\end{align}
Combining these inequalities and using identity 
$\kappa_n\frac{m}{n}=\ln n-\lambda_{n}(0)$
we estimate
\begin{displaymath}
\binom{n}{k}
\PP\{{\cal D}_{[k]}\}
\le 
e^{2k+(1-\beta)k\ln n-\alpha_nk\frac{\ln n-\lambda_{n,m}}{\kappa_n}}
=
e^{-k\frac{\alpha_n}{2\kappa_n}\ln n+kR_n},
\end{displaymath}
where $R_n=2+\lambda_{n}(0)\frac{\alpha_n}{\kappa_n}$.
Since 
$\frac{1}{2}\ge \limsup_n\frac{\alpha_n}{\kappa_n}\ge \liminf_n\frac{\alpha_n}{\kappa_n}>0$ 
(the latter inequality follows from (\ref{2025-12-02}), (\ref{2025-12-02+2}))  and $\lambda_{n}(0)\le c_2$ for some $c_2$, we conclude that $R_2$ is bounded from above by a constant and
$\sum_{k\ge n^{\beta}} e^{-k\frac{\alpha_n}{2\kappa_n}\ln n+kR_n}=o(1)$ as $n\to+\infty$. Hence  
$S_2=o(1)$.
 \end{proof}

\begin{proof}[Proof of Lemma \ref{Lemma_2}]
We first evaluate factorial moments $\E(\eta_1)_r$, $r=1,2,\dots$.
using  the identity
\begin{displaymath}
\binom{\eta_1}{r}
=
\sum_{\{v_{i_1},\dots, v_{i_r}\}\subset {\cal V}}
{\mathbb I}_{\{d(v_{i_1})=0\}}\cdots{\mathbb I}_{\{d(v_{i_r})=0\}},
\end{displaymath}
where  both sides count the (same) number of subsets of 
${\cal V}$ of size $r$ consisting of vertices having degree $0$. 
Fix $r$. Taking the expected values of both sides we obtain, by symmetry,
\begin{align}
\nonumber
\E (\eta_1)_r
&
=r!\E\left( \sum_{\{v_{i_1},\dots, v_{i_r}\}\subset {\cal V}}{\mathbb I}_{\{d(v_{i_1})=0\}}\cdots{\mathbb I}_{\{d(v_{i_r})=0\}}\right)
\\
\label{2023-10-24+2}
&
=
r!\binom{n}{r}\PP\{d(v_1)=0,\dots, d(v_r)=0\}.
\end{align}
%
Now we analyse the product $\prod_{i=1}^mP_{i}$ with
$P_{i}:=\PP\{d_i(v_1)=0,\dots, d_i(v_r)=0\}$.
To this aim we apply inclusion-exclusion inequalities to the probability 
$\PP\{\cup_{k=1}^r\{d_i(v_k)>0\}\}=1-P_{i}$. We have
\begin{align*}   
S_1-S_2 \le 1-P_{i}\le  S_1,
\end{align*}
where
\begin{align*}
 S_1
 &
 =\sum_{k=1}^r\PP\{d_i(v_k)>0\}=r\PP\{d_i(v_1)>0\}
 =
r\frac{\E X_{i}}{n},
\\
S_2
&
=
\sum_{1\le k<j\le r}
\PP\{d_i(v_k)>0,\, d_i(v_j)>0\}
=
\binom{r}{2}
\PP\{d_i(v_1)>0,\, d_i(v_2)>0\}
\\
&
=
\binom{r}{2}
\frac{\E (X_{i})_2}{(n)_2}.
\end{align*}
It follows that
\begin{align*}
P_{i}=1-\frac{r}{n}\E X_{i}
+
\theta_i\frac{(r)_2}{(n)_2}\E (X_{i})_2
=:1-a_i+b_i,
\end{align*}
with some  $\theta_i\in[0,1]$.
Note that   relation
(\ref{phi+}) implies
\begin{align}
\label{2025-11-20+2}
\max_{1\le i\le m}a_i=o(1) 
\qquad
{\text{and}}
\qquad
 \max_{1\le i\le m}b_i=o(1).
 \end{align}
Using $1+t=e^{\ln(1+t)}$ and $t-t^2\le \ln(1+t)\le t$ (these inequalities hold for $|t|\le 0.5$ at least)
we write $P_{i}$ in the form
\begin{align}
\label{2025-11-20+4}
P_{i}
=
e^{\ln(1-a_i+b_i)}
=
e^{-a_i+b_i-R_i}
=
e^{-a_i+R'_i},
\end{align}
where $0\le R_i\le (-a_i+b_i)^2$ and 
where
$R'_i=b_i-R_i$ satisfies
$|R'_i|\le b_i+2a_i^2+2b_i^2$.
Note that for large $n$ we have $b_i\le 1$, by  (\ref{2025-11-20+2}). Hence  
$b_i^2< b_i$. Now using (\ref{phi+})  we upper bound the sum
\begin{align}
\label{2025-11-20+5}
\sum_{i=1}^m
|R'_i|
\le 
2\sum_{i=1}^m a^2_i
+
3\sum_{i=1}^m |b_i|
=o(1).
\end{align}
Indeed,  we have
\begin{align*}
\sum_{i=1}^m a^2_i
&
=
\frac{r^2}{n^2}
\sum_{i=1}^m
(\E X_{i})^2
\le 
\frac{r^2}{n^2}
\sum_{i=1}^m
\E X^2_{i}
=\frac{r^2}{n^2}
\
o\left(\frac{mn}{\ln n}\right)
=
o(1),
\\
\sum_{i=1}^m |b_i|
&
\le 
\frac{(r)_2}{(n)_2}
\sum_{i=1}^m
\E X^2_{i}
=
\frac{(r)_2}{(n)_2}
\
o\left(\frac{mn}{\ln n}\right)
=o(1)
\end{align*}
Here in the last steps we used $m=O(n\ln n)$.

Finally, combining (\ref{2025-11-20+4}), (\ref{2025-11-20+5}) we evaluate the product
\begin{align*}
\prod_{i=1}^mP_{i}
=
e^{-\sum_ia_i+o(1)}
=
e^{-r\frac{m}{n}\kappa_n+o(1)}.
\end{align*}
Invoking this expression in (\ref{2023-10-24+2})
we obtain an approximation to the factorial moment
\begin{align}
\label{2025-11-20+7}
\E (\eta_1)_r
=
(n)_r \prod_{i=1}^mP_{i}
=
e^{r\ln n-r\frac{m}{n}\kappa_n}
(1+o(1))
=
e^{r\lambda_{n}(0)}
(1+o(1)).
\end{align}

Let us prove (\ref{2025-11-20+6}).
For $\lambda_{n}(0)\to-\infty$  
relation (\ref{2025-11-20+7}) implies
$\E \eta_1=o(1)$. Hence
$\PP\{\eta_1\ge 1\}=o(1)$, 
by Markov's inequality.
For $\lambda_{n}(0)\to+\infty$ relation (\ref{2025-11-20+7}) implies
$\E \eta_1\to+\infty$ and 
\begin{align*}
\E \eta^2_1-(\E \eta_1)^2
=
\E(\eta_1)_2+\E \eta_1-(\E \eta_1)^2
=
o(e^{2\lambda_{n}(0)})
=o((\E \eta_1)^2).
\end{align*}
Now Chebyshev's inequality implies
\begin{align*}
\PP\{\eta_1=0\}\le \PP\{|\eta_1-\E \eta_1|\ge \E \eta_1\}
\le
\frac{\E \eta_1^2-(\E \eta_1)^2}{(\E \eta_1)^2}
=
o(1).
\end{align*}
For $\lambda_{n}(0)\to c$ relation 
(\ref{2025-11-20+7}) implies $\E(\eta_1)_r\to e^{rc}$.
Note that $e^{rc}$ is the $r$-th factorial moment of the Poisson distribution with parameter $e^c$. Now the convergence of the distribution of $\eta_1$ to the Poisson distribution follows by the method of moments. An immediate consequence of this convergence is  $\lim_{n}\PP\{\eta_1=0\}=e^{-e^c}$. 
\end{proof}

\medskip
Example 2 (continued). Here we verify conditions (\ref{2025-11-24}), (\ref{2025-12-02}) and show that (\ref{T1+}) remains true with $\lambda_{n}(0)$  replaced by $\lambda_{n}'$.
 Recall that $X_{n,i}$ is the number of isolated vertices in $G'_{n,i}$. From the fact that random variables $X_{n,1},\dots, X_{n,m}$ are identically distributed we conclude that random variables $X_{n,i_*}$ and $X_{n,1}$ have the same distribution.  We use  this observation in the proof of (\ref{2025-11-24}), (\ref{2025-12-02}). To show (\ref{2025-11-24}), we invoke inequality $X_{n,1}\le Y_1$ and the expression for the conditional expectation
$\E(X_{n,1}|Y_1,Q_1)={\hat Y}_{n,1}h({\hat Y}_{n,1},Q_1)$.
 We have
 \begin{align*}
 \E 
 \left(
 X_{n,i_*}\ln(1+X_{n,i_*}){\mathbb I}_{\{X_{n,i_*}>t\}}
 \right)
&
 =
 \E 
 \left(
 X_{n,1}\ln(1+X_{n,1}){\mathbb I}_{\{X_{n,1}>t\}}
 \right)
 \\
 &
 \le 
 \E 
 \left(
 X_{n,1}\ln(1+Y_1){\mathbb I}_{\{Y_1>t\}}
 \right)
 \\
 &
 =
 \E
 \left( 
 \E 
 \left(
 X_{n,1}\ln(1+Y_1){\mathbb I}_{\{Y_1>t\}}|Y_1,Q_1
 \right)
 \right)
 \\
 &
 =
 \E 
 \left(
  {\hat Y}_{n,1}h({\hat Y}_{n,1},Q_1)\ln(1+Y_1){\mathbb I}_{\{Y_1>t\}}
  \right)
 \\
 &
 \le 
 \E 
 \left(
 Y_1h(Y_1,Q_1)\ln(1+Y_1){\mathbb I}_{\{Y_1>t\}}
 \right).
 \end{align*}
Our asumption 
$
\E 
 \left(
 Y_1h(Y_1,Q_1)\ln(1+Y_1)
 \right) 
 <\infty
 $
implies
 $\E 
 \left(
 Y_1h(Y_1,Q_1)\ln(1+Y_1){\mathbb I}_{\{Y_1>t\}}
 \right)=o(1)$ as $t\to+\infty$. We arrived to 
 (\ref{2025-11-24}).
To show (\ref{2025-12-02}) we evaluate the probability
\begin{align*}
\PP\{X_{n,i_*}>0\}
=
\PP\{X_{n,1}>0\}
=
\E \left(1-(1-Q_1)^{{\hat Y}_{n,1}({\hat Y}_{n,1}-1)/2}\right).
\end{align*}
This probability convereges as $n\to+\infty$ to 
$\E \left(1-(1-Q_1)^{Y_{1}(Y_{1}-1)/2}\right)$. The latter quantity is positive because $\PP\{Y_1\ge 2, Q_1>0\}>0$. Hence 
  (\ref{2025-12-02}) holds. 
Finally, we show that $\kappa_{n}'-\kappa_{n}=o\left(\frac{1}{\ln n}\right)$. To this aim
we write $\kappa_n$ in the form
\begin{align*}
\kappa_n
&
=
\E X_{n,i_*}
=
\E X_{n,1}
=
\E 
\left(
{\hat Y}_{n,1}h({\hat Y}_{n,1},Q_1)
\right)
\\
&
=
\E 
\left(
Y_1
h(Y_1,Q_1)
{\mathbb I}_{\{Y_1\le n\}}
\right)
+
\E 
\left(nh(n,Q_{1}){\mathbb I}_{\{Y_1>n\}}
\right)
\end{align*}
and evaluate the difference
\begin{align*}
0
\le 
\kappa_n'-\kappa_n
&
\le 
\E 
\left(
Y_1h(Y_1,Q_1)
\right)
-
\E 
\left(
Y_1h(Y_1,Q_1){\mathbb I}_{\{Y_1\le n\}}
\right)
\\
&
=
\E 
\left(Y_{1}h(Y_{1},Q_{1}){\mathbb I}_{\{Y_1> n\}}
\right) 
 \\
 &
 \le 
 \frac{1}{\ln(1+n)}
 \E 
\left(
Y_1h(Y_1,Q_1)\ln(1+Y_1){\mathbb I}_{\{Y_1> n\}}
\right) 
\\
&
=\frac{1}{\ln (1+n)}o(1).
\end{align*}

\subsection{Proof of Theorem \ref{T2}}

The scheme of the proof of Theorem \ref{T2} is similar to that of Theorem \ref{T1}: we establish
 expansion property (Lemma \ref{L2.2}) and  show
  concentration of vertices of 
degree $ak$  (Lemma \ref{L1} and  Corollary 
\ref{col2}).

The section is organized as follows. We first state Lemmas 
 \ref{L2.2},
 \ref{L1}
  and Corollary 
\ref{col2}. Then we prove Theorem \ref{T2}. Afterwards we prove 
Lemma 
\ref{L2.2}, \ref{L1} and Corollary 
\ref{col2}.

\begin{lem}\label{L2.2}
 Let $k\ge 1$ be an integer. Let $n,m\to+\infty$.
  Assume that 
 $\lambda(0)\to-\infty$ and $n\ln n\asymp m$.
 Assume that 
 (\ref{2025-12-02}),
(\ref{a}) hold.
   Assume that $\sup_n\E X_{n,i_*}^{k+2}<\infty$. Then 
 $
 \PP\{{\cal B}_{k}\cap \{G_{[n,m]}\in {\cal C}_1\}\cap {\cal P}_k\}=o(1)
 $.
\end{lem}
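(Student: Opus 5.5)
We follow the expansion scheme used for $S_0,S_1,S_2$ in the proof of Lemma \ref{Lemma_1G}, now with a separating set $S$ removed. On the event ${\cal B}_k\cap\{G_{[n,m]}\in{\cal C}_1\}\cap{\cal P}_k$ there are sets $S$ and $U$ with $|S|=s\le k$, $U\cap S=\emptyset$, $2\le |U|=r\le (n-s)/2$, such that $U$ induces a connected component of $G_{[n,m]}\setminus S$. Because $G_{[n,m]}$ is connected, $U$ has a neighbour in $S$. Because ${\cal P}_k$ holds, no vertex of $U$ has all its neighbours in $S$; in particular every $u\in U$ has a neighbour inside $U$. We take $S$ minimal, so every vertex of $S$ is adjacent to $U$. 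The union bound then gives
\[
\PP\{\cdot\}\le \sum_{s\le k}\sum_{2\le r\le n/2}\binom{n}{s}\binom{n}{r}\,\PP\{{\cal D}'_{S,U}\},
\]
where ${\cal D}'_{S,U}$ is the event that $G_{[n,m]}$ has no edge between $U$ and ${\cal V}\setminus(U\cup S)$, that $U$ spans a connected subgraph, and that every vertex of $U$ has degree at least $k+1$ in $G_{[n,m]}[U\cup S]$.

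We split the range of $r$ as in (\ref{2024-03-18+1}): $r\le\varphi_n$, $\varphi_n<r\le n^\beta$, and $n^\beta\le r\le n/2$. For the two large ranges we drop the degree and connectivity requirements and bound only the no-edge event. Lemma \ref{q-bounds}(ii),(iii), applied with the vertex set ${\cal V}\setminus S$ of size $n-s$, should still give $e^{-r\frac mn(\kappa_n-\frac{2}{\ln n})+O(r k m/n^2)}$ and $e^{-\alpha_n r m/n}$ respectively. The extra factor $\binom ns\le n^k$ is harmless. In the middle range, since $\lambda(0)\to-\infty$, the sum $\sum_{r>\varphi_n} n^k e^{r\lambda(0)+O(r)}/r!$ is small because $\lambda(0)\to-\infty$ gives geometric decay: choose $\varphi_n$ large compared with $k$, or simply use $e^{r\lambda(0)}\le e^{-r}$ eventually and $r\ge 2$. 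This leaves $n^k e^{2\lambda(0)}$ and similar terms. To absorb $n^k$ we have to use the surplus from $m\asymp n\ln n$, namely $e^{-r\frac mn\kappa}=n^{-r}e^{r\lambda(0)}$. For $r$ large compared with $k$, e.g. $r\ge \varphi_n$, this factor beats $n^k$ comfortably. The range $n^\beta\le r$ is handled exactly as the bound on $S_2$.

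The main obstacle is small $r$ ($2\le r\le\varphi_n$), where $n^{s}\binom nr e^{-r\frac mn\kappa}\approx n^s e^{r\lambda(0)}$ and the factor $n^s$ is not compensated. Here we must exploit the structure forced inside $U\cup S$. We sum over spanning trees of $U$, as in (\ref{suma}), together with the edges from $U$ to $S$. The union $U\cup S$ has $r+s$ vertices, and the subgraph on $U\cup S$ restricted to edges touching $U$ is connected and has a spanning tree with $r+s-1$ edges. These edges are covered by communities $G_{t_1},\dots,G_{t_j}$, and each covering community costs a factor of order $\frac mn\E X_{i_*}^{e+1}/n^{e}$ by (\ref{Stirling}) and (\ref{phi}). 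Using $\sup_n\E X^{k+2}_{i_*}<\infty$ in place of the weaker (\ref{phi}), a community covering $e$ edges contributes $O(\frac{m}{n}\, n^{-e})$. Covering the $r+s-1$ tree edges therefore contributes roughly $n^{-(r+s-1)}(\frac mn)^{j}$ with $j\le r+s-1$, so the total is $\binom nr n^s\, n^{-(r+s-1)}(\ln n)^{O(r+k)}e^{-r\frac mn\kappa(1+o(1))}$. The isolation factor $e^{-r\frac mn\kappa}=n^{-r}e^{r\lambda(0)}$ then yields $n^{1-r}(\ln n)^{O(1)}$, which is $o(1)$ for $r\ge2$.

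Keeping the isolation factor after conditioning on the covering communities uses Lemma \ref{q-bounds}(i) with $|\tilde t|$ bounded. The delicate point is the bookkeeping when one community supplies many tree edges; the moment bound on $X^{k+2}$ is what controls it. We expect this counting to be the hardest step. If needed, the ${\cal P}_k$ condition (every $u\in U$ has at least $k+1-s\ge1$ neighbours outside $S$) can be used to gain extra factors.
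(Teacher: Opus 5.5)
\textbf{There is a genuine gap: your middle and small ranges do not fit together.}

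\emph{The middle range.} There you bound the summand by
\[
\binom ns\binom nr\PP\{\text{no edge}\}\le n^s e^{r(\lambda(0)+O(1))}/r!,
\]
and then argue that $e^{-r\frac mn\kappa}=n^{-r}e^{r\lambda(0)}$ leaves a surplus that beats $n^k$. It does not. That factor $n^{-r}$ has already been spent cancelling $\binom nr\le n^r/r!$. Since $\lambda(0)\to-\infty$ may be arbitrarily slow, the remaining quantity $n^s e^{r(\lambda(0)+O(1))}/r!$ is small only once $r!\gg n^k e^{O(r)}$, i.e.\ $r\gtrsim k\ln n/\ln\ln n$. So $\varphi_n$ cannot be the slowly growing sequence of Lemma \ref{Lemma_1G}; it must be at least of order $k\ln n/\ln\ln n$.

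\emph{The small range.} Your small-range argument must therefore cover all $2\le r\le\varphi_n$ with $r$ of that size, and there it breaks in two places.
\begin{itemize}
\item You claim a community covering $e$ tree edges costs $O(\frac mn n^{-e})$. This needs $\E X_{i_*}^{e+1}=O(1)$ for $e$ up to $r+s-1$, but you only have $\E X_{i_*}^{k+2}=O(1)$. Truncating via $(X)_v/(n)_v\le (X/n)^{\min\{v,k+2\}}$ gives only $n^{-\min\{e,k+1\}}$ per community, so the $n^{1-r}$ you claim is not available.
\item You would also have to show that summing over the $(r+s)^{r+s-2}$ spanning trees of $U\cup S$, and over all labelled partitions of their edges, produces no factor like $r^r$. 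For $r$ of order $\ln n/\ln\ln n$ such a factor is polynomial in $n$ and swamps the gain.
\end{itemize}
You leave exactly this counting open, and it is the whole difficulty. Note also that your event ${\cal D}'_{S,U}$ omits the requirement that every vertex of $S$ is adjacent to $U$, which your spanning tree of $U\cup S$ needs.

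\emph{How the paper avoids this.} It uses no spanning tree. For all $2\le r\le n^\beta$ at once, it records only the structure needed to pay for $\binom ns$ plus one extra factor $1/n$:
\begin{itemize}
\item a partition $S_1,\dots,S_h$ of $S$ such that each $S_i$, together with some $u_i\in U$, lies in ${\tilde{\cal V}}_{t_i}$ for one community $G_{t_i}$;
\item a single edge inside $U$, coming either from one of the $G_{t_i}$ or from another community $G_j$.
\end{itemize}
This costs $r^{s+1}n^{-(s+1)}(m/n)^{s+1}$ times moments of order at most $s+2\le k+2$, which is exactly the hypothesis. It is multiplied by the isolation factor $e^{-r\kappa\frac mn+O(r)}$ from Lemma \ref{q-bounds}(i), with at most $k+1$ communities removed. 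Summing gives $S_1=O\bigl(n^{-1}(\ln n)^{k+1}\bigr)$ uniformly over $r\le n^\beta$, so no intermediate range is needed. The range $r>n^\beta$ is then handled with Lemma \ref{q-bounds}(iii), as you propose.
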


\begin{lem}\label{L1} Let $a\ge 1$ and $k\ge 0$ be integers. Let $n,m\to+\infty$. Assume that
$n\ln n\asymp m$. Assume that (\ref{a}) holds. Assume that
\begin{align}
\label{condition_X^2}
\limsup_n\E X_{n,i_*}^2<\infty.
\end{align}
Then 
$\PP\{{\cal A}\}=o(1)$
and 
$\PP\{N'_k\ge 1\}=o(1)$.
Furthermore, 
for $\lambda_n(k)\to+\infty$
we have
$\E N_k\to+\infty$, $\E N_{*k}\to+\infty$, and
$N_{*k}=(1+o_P(1))\E N_{*k}$.
For $\lambda_n(k)\to-\infty$
we have $\E N_k\to 0$, $\E N_{*k}\to 0$, and
consequently $\PP\{N_k\ge 1\}=o(1)$ and $\PP\{N_{*k}\ge 1\}=o(1)$.
\end{lem}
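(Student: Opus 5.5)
The plan is to reduce everything to first and second moment computations built on one elementary fact. By conditions (ii)--(iii), for a fixed vertex $v$ and community $i$,
\begin{displaymath}
p_i:=\PP\{d_i(v)>0\}=\frac{\E X_i}{n},
\qquad
p_i(a):=\PP\{d_i(v)=a\}=\frac{\E X_i(a)}{n},
\qquad
\PP\{d_i(u)>0,\,d_i(v)>0\}=\frac{\E(X_i)_2}{(n)_2}.
\end{displaymath}
By (\ref{condition_X^2}) and $m\asymp n\ln n$ we have $\sum_i p_i=\frac{m}{n}\kappa\asymp\ln n$. We also have $\sum_i p_i^2\le n^{-2}\sum_i\E X_i^2=O(\ln n/n)=o(1)$ and $\sum_i\frac{\E(X_i)_2}{(n)_2}=O(\ln n/n)$.

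\textbf{The event ${\cal A}$.} By the union bound over pairs $u,v$ and triples of communities,
\begin{displaymath}
\PP\{{\cal A}\}\le n^2\Bigl(\sum_i\frac{\E(X_i)_2}{(n)_2}\Bigr)^3=O\bigl(n^2(\ln n/n)^3\bigr)=o(1).
\end{displaymath}

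\textbf{First moments of $N_k$ and $N_{*k}$.} Since $d'(v)$ is a sum of independent Bernoulli$(p_i)$ indicators with $\max_i p_i=o(1)$ (by (\ref{phi+})) and $\sum_i p_i^2=o(1)$, the Poisson approximation with product bookkeeping gives
\begin{displaymath}
\PP\{d'(v)=k\}=\sum_{|I|=k}\prod_{i\in I}p_i\prod_{j\notin I}(1-p_j)
=(1+o(1))\frac{(\frac{m}{n}\kappa)^k}{k!}e^{-\frac{m}{n}\kappa}.
\end{displaymath}
This uses $\prod_j(1-p_j)=e^{-\sum_j p_j+o(1)}$ exactly as in the proof of Lemma \ref{Lemma_2}, and $\sum_{|I|=k}\prod_{i\in I}p_i=(\sum_i p_i)^k/k!\,(1+o(1))$ because the diagonal terms are controlled by $\sum_i p_i^2=o(1)$. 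Hence
\begin{displaymath}
\E N_k=(1+o(1))\frac{\kappa^k}{k!}e^{\lambda(k)}.
\end{displaymath}
The same computation with $p_i(a)$ in place of $p_i$ on the chosen set $I$ gives
\begin{displaymath}
\E N_{*k}=(1+o(1))\frac{\kappa(a)^k}{k!}e^{\lambda(k)}.
\end{displaymath}
Since $\kappa$ is bounded above and below and $\liminf\kappa(a)>0$ by (\ref{a}), both expectations tend to $+\infty$ or to $0$ together with $\lambda(k)$. When they tend to $0$, Markov's inequality finishes that case.

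\textbf{The count $N'_k$.} I will bound
\begin{displaymath}
\E N'_k\le n^2\sum_{i<i'}\PP\{u,v\in\tilde{\cal V}_i\cap\tilde{\cal V}_{i'}\}\,\PP\{\text{at most }k-2\text{ other communities contain }v\}.
\end{displaymath}
The first factor sums to $O((\ln n/n)^2)$. The second is at most $C(\ln n)^{k}e^{-\frac{m}{n}\kappa+o(1)}$; removing two factors $(1-p_i)$ costs only $1+o(1)$. Since $\frac{m}{n}\kappa\ge c\ln n$ for some $c>0$, this gives $\E N'_k=O\bigl((\ln n)^{k+2}n^{-c}\bigr)=o(1)$. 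No restriction on $\lambda(k)$ is needed here.

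\textbf{Concentration of $N_{*k}$.} For $\lambda(k)\to+\infty$ I would show $\E(N_{*k})_2=(1+o(1))(\E N_{*k})^2$ and apply Chebyshev. Write $\E(N_{*k})_2=n(n-1)\PP\{E_u\cap E_v\}$ with $E_v=\{d_*'(v)=d'(v)=k\}$, and split according to whether some community contains both $u$ and $v$ as non-isolated vertices.
\begin{itemize}
\item The shared part is bounded by the same computation as for $N'_k$, now with one shared community, and is $O\bigl((\ln n)^{2k+1}n^{-2c}\bigr)\cdot n^2\cdot n^{-1}$ relative terms, i.e.\ $o((\E N_{*k})^2)$ after dividing by $(\E N_{*k})^2\asymp e^{2\lambda(k)}\ge$ const.
\item In the non-shared part one sums over disjoint $k$-sets $I,J$. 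The factors are:
  \begin{itemize}
  \item $\PP\{d_i(u)=a,\ d_i(v)=0\}=p_i(a)\bigl(1-O(\E X_i^2/(n\,\E X_i(a)))\bigr)$ for $i\in I$, and symmetrically for $i\in J$;
  \item $\PP\{d_j(u)=d_j(v)=0\}=1-2p_j+O(\E X_j^2/n^2)$ for $j\notin I\cup J$.
  \end{itemize}
  The products of these corrections are $e^{O(\sum_j\E X_j^2/n^2)}=1+o(1)$ for the $j$ factors. The $2k$ corrections on $I\cup J$, summed against the weights $p_i(a)$, contribute only $O(\sum_i\E X_i^2/n^2)=o(1)$ relatively. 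Dropping the disjointness constraint between $I$ and $J$ costs $o(1)$ by $\sum_i p_i(a)^2=o(1)$.
\end{itemize}
Altogether $\E(N_{*k})_2=(1+o(1))(\E N_{*k})^2$. Together with $\E N_{*k}\to\infty$, Chebyshev gives $N_{*k}=(1+o_P(1))\E N_{*k}$.

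I expect this last step, the careful second-moment bookkeeping for non-identically distributed communities, to be the main obstacle. The point to watch is that every correction term is dominated by $\sum_i\E X_i^2/n^2=O(\ln n/n)$, which is exactly what (\ref{condition_X^2}) provides.
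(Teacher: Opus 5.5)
Your outline follows the paper's proof of Lemma~\ref{L1} closely. It uses the union bound for ${\cal A}$, a first-moment bound for $N'_k$, and the product formula $\sum_{|I|=k}\prod_{i\in I}p_i\prod_{j\notin I}(1-p_j)$ for $\E N_k$ and $\E N_{*k}$ (the content of Lemma~\ref{AB+}). The second-moment computation is split exactly as in the paper, into configurations with no community shared by $u,v$ (the paper's $\bar p_0$) and with at least one shared community ($\sum_{s\ge1}\bar p_s$).

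There are two small differences. First, you bound $\E N'_k$ directly by a union bound over the two shared communities, whereas the paper first restricts to $\bar{\cal A}$ and computes $\E N''_k$ with $d'(u,v)=2$; your route does not need $\PP\{{\cal A}\}=o(1)$ at that point. Second, $\max_ip_i=o(1)$ should be taken from (\ref{2026-01-10+6}), or from your own bound $\sum_ip_i^2=O(\ln n/n)$. Citing (\ref{phi+}) is not valid here, since it was derived under (\ref{2025-11-24}), which is not assumed in this lemma.

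The one step that fails as written is the shared part of $\E(N_{*k})_2$. You replace $e^{-2\kappa m/n}$ by $n^{-2c}$, for a fixed $c>0$ with $\kappa\frac{m}{n}\ge c\ln n$, which bounds the shared part by $n^{1-2c}(\ln n)^{2k+1}$. You then compare this with $(\E N_{*k})^2\ge$ const. Under the lemma's hypotheses one can have $\kappa\frac{m}{n}=c\ln n$ for any fixed $c\in(0,1)$ while $\lambda(k)\to+\infty$. For $c\le 1/2$ your bound diverges, so no conclusion follows.

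The exponential factor must be kept so that it cancels. Take a union bound over the shared community $i$ and over the set $J$, $|J|\le 2k-2$, of the other communities meeting $\{u,v\}$. The shared part is then at most
\[
n^2\sum_{i\in[m]}\frac{\E(X_i)_2}{(n)_2}\sum_{|J|\le 2k-2}\prod_{j\in J}\frac{2\E X_j}{n}\prod_{l\notin J\cup\{i\}}\PP\{d_l(u)=d_l(v)=0\}
=O\Bigl(n(\ln n)^{2k-1}e^{-2\kappa\frac{m}{n}}\Bigr),
\]
using $\prod_l\PP\{d_l(u)=d_l(v)=0\}=e^{-2\kappa m/n}(1+o(1))$ from (\ref{H}). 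This needs the joint two-vertex factors $1-\frac{2\E X_l}{n}+\frac{\E(X_l)_2}{(n)_2}$, not the one-vertex computation you did for $N'_k$.

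Since $(\E N_{*k})^2\asymp n^2(\ln n)^{2k}e^{-2\kappa m/n}$, the relative error is $O(1/(n\ln n))=O(1/m)$. This matches the paper's comparison of $\sum_{s\ge1}\bar p_s=O(Hm^{2k-1}/n^{2k})$ with $\bar p_0\asymp Hm^{2k}/n^{2k}$.

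Even the one-vertex factor $e^{-\kappa m/n}$ would suffice, since it bounds the shared part by $O(e^{\lambda(k)})=o(e^{2\lambda(k)})$. The real error is comparing against a constant rather than against $e^{2\lambda(k)}$. With this repair the rest of your argument goes through.
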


\begin{col}\label{col2}
Let $a\ge 1$ and $k\ge 1$ be integers. 
Let $n,m\to+\infty$.  Asume that (\ref{a}),
(\ref{condition_X^2}) hold.
For 
$m,n$ 
satisfying 
\begin{align}
\label{2026-01-14+2}
\ln n
+
(k-1)\ln\ln n
\prec
\kappa \frac{m}{n}
\prec
\ln n
+
k\ln\ln n
\end{align}
 the following 
properties hold whp: 

(i) for  $0\le r\le k-1$ we have $N_r=0$;

(ii) for each  (fixed) $r\ge k$ we have that $N_{*r}\to \infty$ and each vertex $v$ with $d'(v)=r$ is the center of an $r$-blossom;

(iii) the minimal degree $\delta(G_{[n,m]})=ka$. 
\end{col}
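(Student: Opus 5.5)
The plan is to deduce everything from Lemma \ref{L1}, applied with finitely many values of the index $r$ in place of $k$. The first step is to translate (\ref{2026-01-14+2}) into statements about $\lambda_n(r)$.

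\emph{Scale of $m$.} Condition (\ref{a}) gives $\liminf_n\kappa_n\ge\liminf_n\E X_{n,i_*}(a)>0$. Condition (\ref{condition_X^2}) gives $\limsup_n\kappa_n<\infty$. Since $\kappa\frac{m}{n}=\ln n+O(\ln\ln n)$ by (\ref{2026-01-14+2}), we get $m\asymp n\ln n$, so the hypothesis $n\ln n\asymp m$ of Lemma \ref{L1} holds. Moreover
\begin{align*}
\ln\frac{m}{n}=\ln\ln n-\ln\kappa_n+o(1)=\ln\ln n+O(1).
\end{align*}

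\emph{Sign of $\lambda_n(r)$.} From $\lambda_n(r)=\ln n+r\ln\frac{m}{n}-\kappa\frac{m}{n}$ and the previous display we have $\lambda_n(r)=\ln n+r\ln\ln n-\kappa\frac{m}{n}+O(1)$ for each fixed $r$. Hence the left relation in (\ref{2026-01-14+2}) gives $\lambda_n(r)\to-\infty$ for every $0\le r\le k-1$. The right relation gives $\lambda_n(r)\to+\infty$ for every fixed $r\ge k$, because for $r>k$ the extra term $(r-k)\ln\ln n$ only helps.

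\emph{Parts (i) and (ii).} For (i), apply Lemma \ref{L1} with each $r\in\{0,\dots,k-1\}$ in place of $k$. This yields $\PP\{N_r\ge1\}=o(1)$, and a union bound over these finitely many $r$ finishes the proof. For (ii), fix $r\ge k$. Lemma \ref{L1} gives $\E N_{*r}\to\infty$ and $N_{*r}=(1+o_P(1))\E N_{*r}$, so $N_{*r}\to\infty$ whp. It also gives $\PP\{N'_r\ge1\}=o(1)$. On the event $N'_r=0$, every $v$ with $d'(v)=r$ is the center of an $r$-blossom, by the remark following the definition of blossoms.

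\emph{Part (iii), upper bound.} By (ii) with $r=k$ there is whp a vertex $v$ with $d_*'(v)=d'(v)=k$ which is the center of a $k$-blossom. Its petals meet only at $v$, so the neighbourhoods of $v$ in the $k$ communities are disjoint. Therefore $d(v)=\sum_i d_i(v)=ka$, and $\delta(G_{[n,m]})\le ka$.

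\emph{Part (iii), lower bound.} Take any vertex $v$. By (i), whp $d'(v)=r\ge k$. I treat two ranges of $r$.
\begin{itemize}
\item For $k\le r\le 2k-1$, a finite range, whp $N'_r=0$ for all these $r$ simultaneously. Then $v$ is a blossom center, so $d(v)=\sum_i d_i(v)\ge ra\ge ka$, since every nonzero $d_i(v)$ is at least $a$ by (\ref{a}).
\item For $r\ge 2k$, use $\PP\{{\cal A}\}=o(1)$ from Lemma \ref{L1}. On ${\cal A}^c$, each neighbour $u$ of $v$ lies together with $v$ in at most two communities. So each neighbour is counted at most twice in $\sum_i d_i(v)$, and $d(v)\ge\frac12\sum_i d_i(v)\ge\frac{ra}{2}\ge ka$.
\end{itemize}

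Combining the upper and lower bounds gives $\delta(G_{[n,m]})=ka$ whp. The only delicate point is this lower bound for vertices lying in many communities, where the blossom property is not available uniformly in $r$. The split at $r=2k$ together with the event ${\cal A}^c$ handles it, so I expect no serious obstacle.
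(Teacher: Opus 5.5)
Your proposal is correct and follows the paper's proof essentially step for step: you convert (\ref{2026-01-14+2}) into $\lambda_n(r)\to-\infty$ for $r<k$ and $\lambda_n(r)\to+\infty$ for $r\ge k$, invoke Lemma \ref{L1} for (i) and (ii), and for (iii) use the blossom structure when $d'(v)$ lies in a bounded range above $k$ and the event ${\cal A}^c$ (each neighbour shares at most two communities with $v$) when $d'(v)$ is large. Your formulation of that last step, that each neighbour is counted at most twice in $\sum_i d_i(v)$, is actually the accurate version of the paper's chain: the paper writes $\sum_{u\in{\cal N}_w}d'(w,u)=\sum_i d_i(w)$, but only $\ge$ holds there.
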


{\it Proof of Theorem} \ref{T2}. 
Proof of (\ref{2026-01-20+1}).
For $k=0$ Lemma \ref{L1} shows
$N_{*0}\ge 1$ whp. The simple identity $N_0=N_{*0}$ implies $N_0\ge 1$ whp. Hence $G_{[n,m]}$ contains an  isolated vertex whp. Therefore $\PP\{G_{[n,m]}\in {\cal C}_1\}=o(1)$. 

For $k\ge 1$ Lemma \ref{L1} implies
$N_{*k}\ge 1$ and $N'_{k}=0$ whp.
 Hence $G_{[n,m]}$ contains a $k$-blossom with each  petal contributing  $a$  unique neighbours to the central vertex of the blossom. Therefore 
 the central vertex has degree $ak$ in $G_{[n,m]}$. Removal of 
 its $ak$ neighbours makes this vertex isolated. We conclude 
 that
  $\PP\{G_{[n,m]}\in{\cal C}_{ak+1}\}=o(1)$. 

Proof of (\ref{2025-08-25+r}).  It suffices to prove  
(\ref{2025-08-25+r}) for $m=m(n)$ satisfying (\ref{2026-01-21}).
 Indeed  
for a sequence $m(n)$ satisfying 
$\ln n+k\ln\frac{m}{n}\prec\kappa\frac{m}{n}$
we can find a sequence $m'(n)$ satisfying 
(\ref{2026-01-21}) and
such that $m'(n)\le m(n)$. We may assume that the first $m'$ communities $G_1,\dots, G_{m'}$ satisfy (\ref{a}).
Since $G_{[n,m]}$ can be obtained from $G_{[n,m']}$ by adding $m-m'$ communities $G_{m'+1},\dots, G_{m}$ there is a natural coupling 
$G_{[n,m']}\subset G_{[n,m]}$ with probability $1$. 
Hence, $\PP\{G_{[n,m']}\in {\cal C}_{(k+1)a}\} \le 
\PP\{G_{[n,m]}\in {\cal C}_{(k+1)a}\}$. Now
relation $\PP\{G_{[n,m']}\in {\cal C}_{(k+1)a}\}=1-o(1)$ implies $\PP\{G_{[n,m]}\in {\cal C}_{(k+1)a}\}=1-o(1)$.

For the rest of the proof we assume that 
 (\ref{2026-01-21}) 
 holds. 
 Note that (\ref{2026-01-21}) implies 
$m=O(n\ln n)$. We consider the cases $k=0$ and $k\ge 1$ separately.

Let $k=0$.  Theorem \ref{T1}  shows
$\PP\{G_{[n,m]}\in {\cal C}_1\}=1-o(1)$. For $a=1$ nothing more needs to be proven.
For $a\ge 2$
each vertex of a  connected graph $G_{[n,m]}$  
has degree at least $a$. We conclude 
that $\PP\{{\cal P}_{a-1}\}=1-o(1)$. Next, we invoke the bound
$\PP\{{\cal B}_{a-1}\cap {\cal C}_1\cap{\cal P}_{a-1}\}=o(1)$,
 which is shown in   Lemma \ref{L2.2}. Combining these bounds we arrive to
  (\ref{2025-08-25+r})
\begin{align*}
\PP\{G_{[n,m]}\not\in {\cal C}_{a}\}
&
=
\PP\{G_{[n,m]}\not\in {\cal C}_{a}\cap \{G_{[n,m]}\in {\cal C}_1\}
\cap{\cal P}_{a-1}\}+o(1)
\\
&
=
\PP\{{\cal B}_{a-1}\cap \{G_{[n,m]}\in {\cal C}_1\}\cap{\cal P}_{a-1}\}
+
o(1)
\\
&
=
o(1).
\end{align*}

Let $k\ge 1$. Corollary \ref{col2} (iii) implies  
$\delta(G_{[n,m]})= (k+1)a$ whp.
Hence, $\PP\{{\cal P}_{(k+1)a-1}\}=1-o(1)$. 
Furthermore,  (\ref{2026-01-21}) implies
$\PP\{G_{[n,m]}\in {\cal C}_1\}=1-o(1)$, 
by Theorem \ref{T1}. Moreover,
Lemma \ref{L2.2} implies  
\begin{align*}
\PP\{
{\cal B}_{(k+1)a-1}
\cap \{G_{[n,m]}\in {\cal C}_1\}
\cap{\cal P}_{(k+1)a-1}
\}
=
o(1).
\end{align*}
Combining these bounds we 
we obtain
\begin{align*}
\PP\{G_{[n,m]}\not\in {\cal C}_{a(k+1)}\}
&
=
\PP\{\{G_{[n,m]}\not\in {\cal C}_{a(k+1)}\}
\cap 
\{G_{[n,m]}\in {\cal C}_1\}
\cap{\cal P}_{a(k+1)-1}\}+o(1)
\\
&
=
\PP\{{\cal B}_{a(k+1)-1}
\cap 
\{G_{[n,m]}\in {\cal C}_1\}
\cap
{\cal P}_{a(k+1)-1}\}
+
o(1)
\\
&
=
o(1).
\end{align*}  
The proof of  (\ref{2025-08-25+r}) is complete.
Finally,   (\ref{degree}) follows by  Corollary \ref{col2} (iii).
\qed

\begin{proof}[Proof of Lemma \ref{L2.2}]
We write $G=G_{[n,m]}$ for short.
Note that
 (\ref{2025-12-02}) implies  that 
 for some 
 ${\tilde \alpha}>0$  and all  sufficiently large $n$
 we have $\alpha_n>{\tilde \alpha}$. We assume below that 
 $\alpha_n>{\tilde \alpha}$.
Let $p_{s,r}$ denote the probability that 
$\{s+1,\dots, s+r\}$ 
induces  a component in $G-\{1,\dots, s\}$
and each vertex $i\in\{1,\dots, s\}$  is linked to some vertex from 
$\{s+1,\dots, s+r\}$
in $G$. 
Let $p^*_{s,r}$ denote the probability that 
$G-\{1,\dots, s\}$ has no edges connecting 
$\{s+1,\dots, s+r\}$  and $[n]\setminus [s+r]$. Note  that 
$p_{s,r}\le p^*_{s,r}$. 
We have, by the union bound and symmetry, that

\begin{align}
\label{2025-07-24+r}
&
\PP\{{\cal B}_{k}\cap \{G_{[n,m]}\in{\cal C}_1\}\cap {\cal P}_k\}
\le 
\sum_{s=1}^{k}\binom{n}{s}
\sum_{2\le r\le (n-s)/2}
\binom{n-s}{r}p_{s,r}
\le S_1+S_2,
\end{align}
where 
\begin{align*}
S_1
:=
\sum_{s=1}^{k}\binom{n}{s}
\sum_{2\le r\le n^{\beta}}
\binom{n-s}{r}p_{s,r},
\qquad
S_2:=
\sum_{s=1}^{k}\binom{n}{s}
\sum_{n^{\beta} < r\le (n-s)/2}
\binom{n-s}{r}p^*_{s,r}.
\end{align*}
We explain inequality  (\ref{2025-07-24+r}): $s$ stands for the size of the 
minumal vertex cut,  $r$ stands for the size of the  smallest  component  of the graph
with a  minimal cut set removed.  Given $1\le s\le k$ there are
$\binom{n}{s}$ ways to select the cut set of size $s$. Furthermore, there are $\binom{n-s}{r}$ ways to select the vertex set of the component of size $r$ from the remaining $n-s$ vertices.
 We also use the fact that on the event ${\cal P}_k$  the minimal component size $r$ is at least $2$.
We choose 
$\beta=1-\frac{{\tilde \alpha}}{2\kappa}$ and
show that $S_i=o(1)$ for $i=1,2$.

\smallskip

{\it Proof of} $S_1=o(1)$.
Given $s$ and $r$ we evaluate the probability $p_{s,r}$.
We begin by introducing some new notation.
Denote $S=[s]$, $U=[s+r]\setminus[s]$. 
We think of $S$ as a potential minimal cut set  and $U$ as the smallest component of $G-S$.
Let ${\tilde S}=\{S_1,\dots, S_h\}$ be a partition of $S$ into disjoint non-empty parts, $S=S_1\cup\cdots\cup S_h$. We denote $s_i=|S_i|$. 
Given ${\tilde t}=(t_1,\dots, t_h)\in [m]^h$
such
that $t_1<\cdots<t_h$ and a permutation $\pi:[h]\to[h]$
define the event 
\begin{align*}
{\cal F}({\tilde S}, \pi, {\tilde t})
=\{S_{i}\subset {\tilde {\cal V}}_{t_{\pi(i)}},  \,\forall  1\le i\le h\}\cap
\{ u_i\in {\tilde {\cal V}}_{t_\pi(i)} {\text{\ for \ some \ }} u_i\in U,\  \forall  1\le i\le h\},
\end{align*}
which holds when vertices of $S_i\in{\tilde S}$
 are connected to some vertex $u_i \in U$ by the edges of community 
 $G_{t_{\pi(i)}}$, for $i=1,\dots, h$. 
We denote by  $G_U$ the subgraph of $G$ induced by vertex set $U$. Note that $G_U$  is induced by $U$ in $G-S$ as well. In the case where $S$ is a minimal cut the subgraph 
$G_U$ is connected and it contains at least two vertices. Hence $G_U$ contains at least one edge. 
Such an edge can be produced either by  some 
$G_{t_{\pi(\ell)}}$ (configuration referred to as  case (i)),  or  by some $G_j$ distinct from $G_{t_1},\dots, G_{t_h}$ (configuration referred to as  case (ii)).
In the case (i) at least one of the events
 \begin{align}
 \label{2025-08-12+r}
 {\cal F}_{\ell}({\tilde t})
 =\{v_1,v_2\in U\cap {\tilde {\cal V}}_{t_\pi(\ell)} {\text{ \ for\ some\ }}v_1,v_2\in U \},
 \qquad 
  1\le \ell\le h
\end{align}
occurs. In the case (ii) at least one of the events
 \begin{align*}
 {\cal F}_j
 =
\{v_1,v_2\in U\cap {\tilde {\cal V}}_{j} {\text{ \ for\ some\ }}v_1,v_2\in U\},
\qquad
 j\in [m]\setminus \{t_1,\dots, t_h\}
\end{align*}
occurs.
Next, given  $H\subset [m]$, we introduce event ${\cal I}(S,U, H)$
 that 
none of the communities $G_j$, $j\in H$ has an edge connecting (some 
$v\in$) $U$ with (some $w\in$) ${\cal V}\setminus (S\cup U)$.
Let ${\cal I}_U$ denote the event that  $G_U$  is connected.
Let ${\mathbb S}_h$ denote the collection of partitions of $S$ into $h$ non-empty parts. 

\smallskip

Let us upper bound the probability  $p_{s,r}$. We have, by the union bound, that
\begin{equation}
\label{2025-08-01+r}
p_{s,r}
\le
\sum_{h=1}^s
\sum_{{\tilde S}\in {\mathbb S}_h}
\sum_{{\tilde t}\in T_h}
\sum_{\pi:[h]\to[h]}
\PP\{{\cal F}({\tilde S}, \pi, {\tilde t})\cap {\cal I}(S,U,[m])
\cap
{\cal I}_U\}.
\end{equation}
Here $T_h$ stands for the set of vectors ${\tilde t}=(t_1,\dots, t_h)\in[m]^h$ with
$t_1<\cdots<t_h$.
Now we will
evaluate probabilities on the right of 
(\ref{2025-08-01+r}). 
We fix $h$, ${\tilde S}$, ${\tilde t}$ and $\pi$.
We recall that the connectivity of $G_U$ implies that at least one of the events 
$\bigcup_{\ell\in[h]}{\cal F}_{\ell}({\tilde t})$ 
and 
$\bigcup_{j\in[m]\setminus\{t_1,\dots, t_h\}}
{\cal F}_j$ holds. Hence, by the union bound,
\begin{align*}
\PP\left\{
{\cal F}
({\tilde S}, \pi, {\tilde t})
\cap 
{\cal I}(S,U,[m])
\cap
{\cal I}_U
\right\}
&
\le
\PP\left\{
{\cal F}({\tilde S}, 
\pi, 
{\tilde t})
\cap 
{\cal I}(S,U,[m])
\cap
\left(
\bigcup_{\ell\in[h]}
{\cal F}_{\ell}({\tilde t})
\right)
\right\}
\\
&
+
\PP\left\{
{\cal F}({\tilde S}, \pi, {\tilde t})\cap {\cal I}(S,U,[m]) 
\cap
\left(
\bigcup_{j\in[m]\setminus\{t_1,\dots, t_h\}}
{\cal F}_j
\right)\right\}.
\end{align*}
Using  the independence of $G_1,\dots, G_m$  and the observation that  probabilities on the right increase (at least nondecrease) if 
we replace event ${\cal I}(S,U,[m])$ 
by 
${\cal I}(S,U,[m]\setminus\{t_1,\dots, t_h\})$ 
or by ${\cal I}(S,U,[m]\setminus\{t_1,\dots, t_h, j\})$
we 
obtain
\begin{align}
\label{2025-08-13+1+r}
&\PP\left\{{\cal F}({\tilde S}, \pi, {\tilde t})\cap {\cal I}(S,U,[m])
\cap
{\cal I}_U
\right\}
\le
\\
\nonumber
&
\qquad
\
\quad
\sum_{\ell\in [h]}\PP\left\{{\cal F}({\tilde S}, \pi, {\tilde t})
\cap
{\cal F}_{\ell}({\tilde t})\right\}
\PP\left\{{\cal I}(S,U,[m]\setminus\{t_1,\dots, t_h\}\right\}
\\
\nonumber
&
\qquad
+
\sum_{j\in[m]\setminus\{t_1,\dots, t_h\}}
\PP\left\{
{\cal F}({\tilde S}, \pi, {\tilde t})\cap{\cal F}_j\right\}
\PP\left\{ {\cal I}(S,U,[m]\setminus\{t_1,\dots, t_h,j\}\right\}.
\end{align}

Furthermore,  we estimate the probabilities
\begin{align}
\label{2025-08-13+2+r}
\PP\left\{{\cal I}(S,U,[m]\setminus\{t_1,\dots, t_h\}\right\}
&
\le
\PP\left\{ {\cal I}(S,U,[m]\setminus\{t_1,\dots, t_h,j\}\right\}
\\
\nonumber
&
\le
e^{-r\left(\kappa\frac{m}{n}-o(1)\right)},
\end{align}
where the error term  bound $o(1)$ holds uniformly in $r$, $j$ and
${\tilde t}\in T_h$ with $h\le k$.
The first inequality of (\ref{2025-08-13+2+r}) is obvious, the last one follows by Lemma \ref{q-bounds} (i).
 
Now, we estimate  probabilities
$\PP\left\{{\cal F}({\tilde S}, \pi, {\tilde t})
\cap
{\cal F}_{\ell}({\tilde t})\right\}$ 
and
$\PP\left\{{\cal F}({\tilde S}, \pi, {\tilde t})
\cap
{\cal F}_j\right\}$.
 We have
\begin{align*}
\PP\left\{{\cal F}({\tilde S}, \pi, {\tilde t})
\cap
{\cal F}_{\ell}({\tilde t})\right\}
\le
r^{h-1}\binom{r}{2}
\frac{\E(X_{t_{\pi(\ell)}})_{s_{\ell}+2}}
{(n)_{s_{\ell}+2}}
\prod_{i\in[h]\setminus\{\ell\}}
\frac{\E(X_{t_{\pi(i)}})_{s_i+1}}{(n)_{s_i+1}}.
\end{align*}

Here $\binom{r}{2}$  counts potential vertex pairs $\{v_1,v_2\}$ that 
realize event 
${\cal F}_{\ell}({\tilde t})$  and
$\frac{\E(X_{t_{\pi(\ell)}})_{s_{\ell}+2}}
{(n)_{s_{\ell}+2}}$ is the probability that the random set
${\tilde {\cal V}}_{t_{\pi(\ell)}}$ covers the union 
$S_{\ell}\cup\{v_1,v_2\}$; 
$r^{h-1}$ counts 
$(h-1)$-tuples of vertices $u_i$
 such that for every $i\in[h]\setminus\{\ell\}$ the random set 
 ${\tilde {\cal V}}_{t_{\pi(i)}}$ covers the union  $S_i\cup\{u_i\}$; the ratios 
$\frac{\E(X_{t_{\pi(i)}})_{s_i+1}}{(n)_{s_i+1}}$
evaluate  probabilities of such covers.
Using simple 
inequality $\frac{(x)_j}{(n)_j}\le \frac{x^j}{n^j}$ 
(valid 
for $n\ge x$) we estimate 
$\frac{\E(X_r)_{j}}
{(n)_j}\le \frac{\E X_r^j}{n^j}$. 
We obtain
\begin{align}
\label{2025-08-06+r}
\PP\left\{{\cal F}({\tilde S}, \pi, {\tilde t})
\cap
{\cal F}_{\ell}({\tilde t})\right\}
&
\le
\frac{r^{h+1}}{2}
\frac{1}
{n^{h+1+\sum_{i\in [h]}s_i}}
\E X_{t_{\pi(\ell)}}^{s_{\ell}+2}
\prod_{i\in[h]\setminus\{\ell\}}
\E X_{t_{\pi(i)}}^{s_i+1}
\\
\nonumber
&
\le \frac{1}{2}
\frac{r^{h+1}}{n^{h+1+s}}
\prod_{i\in[h]}
\E X_{t_i}^{s+2}.
\end{align}
In the last step  we used $1<\E X_i^f<\E X_i^g$
 for $0<f<g$ (recall that $\PP\{X_i\ge 2\}=1$).

Next, using the independence of $G_j,G_{t_1},\dots, G_{t_h}$
 we similarly estimate the probability
\begin{align}
\nonumber
\PP\left\{{\cal F}({\tilde S}, \pi, {\tilde t})
\cap
{\cal F}_j\right\}
&
=
\PP\left\{
{\cal F}_j\right\}
\times
\PP\left\{{\cal F}({\tilde S}, \pi, {\tilde t})
\right\}
\\
\nonumber
&
\le
\frac{\E(X_j)_{2}}{(n)_{2}}
\times
\left(r^{h}
\prod_{i\in[h]}
\frac{\E(X_{t_{\pi(i)}})_{s_i+1}}{(n)_{s_i+1}}
\right)
\\
\nonumber
&
\le
r^h\frac{1}{n^{h+s+2}} (\E X_j^2)\prod_{i\in[h]}\E X_{t_{\pi(i)}}^{s_i+1}
\\
\label{2025-08-13+r}
&
\le
\frac{r^h}{n^{h+s+2}} (\E X_j^2)
\prod_{i\in[h]}\E X_{t_i}^{s+1}.
\end{align}

Finally, we invoke (\ref{2025-08-13+2+r}), (\ref{2025-08-06+r}),  
(\ref{2025-08-13+r}) in (\ref{2025-08-13+1+r})
and apply inequalities
\begin{align*}
\prod_{i\in[h]}\E X_{t_i}^{s+1}
\le 
\prod_{i\in[h]}\E X_{t_i}^{s+2},
\qquad
\sum_{j\in [m]\setminus\{t_1,\dots, t_h\}}\E X_j^2
\le\sum_{j\in[m]}\E X_j^2
=
m\E X_{i*}^2.
\end{align*}
We obtain that 
\begin{align*}
\PP\{{\cal F}({\tilde S}, \pi, {\tilde t})\cap {\cal I}(S,U,[m])
\cap
{\cal I}_U\}
&
\le
\frac{r^h}{n^{s+h+1}}
\left(
r\frac{h}{2}+\frac{m}{n}\E X_{i*}^2
\right)
\\
&
\times
e^{-r\left(\kappa\frac{m}{n}-o(1)\right)}
\prod_{i\in[h]}\E X_{t_i}^{s+2}.
\end{align*}
Note that the  quantity on the right
does not depend on the partition 
${\tilde S}\in {\mathbb S}_h$
and permutation $\pi$. 
Next, using   (\ref{Fact1})
we bound the sum of products 
\begin{align*}
h!\sum_{{\tilde t}\in T_h}\prod_{i\in[h]}\E X_{t_i}^{s+2}
\le 
m^h\E X_{i*}^{s+2}.
\end{align*}
Combining the latter two inequalities we upper bound the sum on the right of 
 (\ref{2025-08-01+r}). We have
\begin{align}
\nonumber
p_{s,r}
&
\le
\sum_{h=1}^s
|{\mathbb S}_h|
\frac{r^hm^h}{n^{s+h+1}}\E X_{i*}^{s+2}
\left(
r\frac{h}{2}+\frac{m}{n}\E X_{i*}^2
\right)
e^{-r\left(\kappa\frac{m}{n}-o(1)\right)}
\\
\nonumber
&
\le
c'
\frac{1}{n^{s+1}}\sum_{h=1}^s\left(r^{h+1}\frac{m^h}{n^h}
+
r^h\frac{m^{h+1}}{n^{h+1}}\right)
e^{-r\left(\kappa\frac{m}{n}-o(1)\right)}
\\
\label{2025-08-13+3+r}
&
\le
c''r^{s+1}
\frac{1}{n^{s+1}}
\left(\frac{m}{n}\right)^{s+1}
e^{-r\left(\kappa\frac{m}{n}-o(1)\right)}.
\end{align}

Here and below $c', c'', c'''$ denote constants that do not depend on $m,n,r$.
In the second inequality we uper bounded   the number of partitions
$|{\mathbb S}_h|$
 (Stirling's number of the second kind)     by  a constant (depending on $s$, but not depending on $n,m,r$). 

We invoke bound (\ref{2025-08-13+3+r}) in the formula for $S_1$ (see (\ref{2025-07-24+r}) and below). We have
\begin{align*}
S_1
\le
\sum_{s=1}^{k}\frac{n^s}{s!}
\sum_{2\le r\le n^{\beta}}
\frac{n^r}{r!}p_{s,r}
\le
c'''
\sum_{s=1}^{k}
\frac{1}{s!}\frac{m^{s+1}}{n^{s+2}}
\sum_{2\le r\le n^{\beta}}
\frac{r^{s+1}}{r!}
e^{r\left(\lambda(0)+o(1)\right)}.
\end{align*}
 Our assumptions $m=O(n\ln n)$ and $\lambda(0)\to-\infty$ implies 
 $S_1=o(1)$ as $n,m\to+\infty$.
 
 \smallskip
 
 {\it Proof of} $S_2=o(1)$. For
 $\alpha_n>{\tilde \alpha}$
 Lemma \ref{q-bounds} (iii) shows
 \begin{align*}
 p^*_{s,r}
 \le e^{-{\tilde \alpha} r\frac{m}{n}},
 \qquad
 1\le r\le n/2.
 \end{align*}
Combining this inequality with (\ref{2026-01-30}) and 
$\binom{n}{s}\le  n^s$ we obtain
for $n^{\beta}\le r\le (n-s)/2$
\begin{align*}
\binom{n}{s}
\binom{n}{r} 
p^*_{s,r}
\le
e^{-\alpha r\frac{m}{n}+2r+(1-\beta)r\ln n+s\ln n}.
\end{align*}
In view of 
 identities 
$\frac{m}{n}=\frac{\ln n-\lambda(0)}{\kappa}$ and 
$1-\beta=\frac{{\tilde \alpha}}{2\kappa}$ we write the quantity on the right in the form
\begin{align*}
e^{-r\left(\frac{{\tilde \alpha}}{2\kappa}\ln n-\frac{{\tilde \alpha}}{\kappa}\lambda(0) -2-\frac{s}{r}\ln n\right)}.
\end{align*}
Finally, since $\lambda(0)\to-\infty$ and $\frac{s}{r}\ln n\le sn^{-\beta}\ln n=o(1)$, we conclude that
$S_2=o(1)$.
\end{proof}

\begin{proof}[Proof of Lemma \ref{L1}]
We upper bound $\PP\{{\cal A}\}$ using 
the union bound,
\begin{align*}
\PP\{{\cal A}\}
&
\le 
\sum_{\{i,j,r\}\subset [m]}
\sum_{\{u,v\}\subset {\cal V}}
\PP\{d_\ell(u)>0, d_\ell(v)>0, \forall \ell\in\{i,j,r\}\}
\\
&
=
\binom{n}{2}
\sum_{\{i,j,r\}\subset [m]}
\frac{\E(X_i)_2}{(n)_2}
\frac{\E(X_j)_2}{(n)_2}
\frac{\E(X_r)_2}{(n)_2}
\\
&
\le
\binom{n}{2}
\left(\frac{m}{(n)_2}\right)^3
\frac{1}{3!}
\left(\E (X_{i_*})_2\right)^3
=o(1).
\end{align*}
In the second inequality we used (\ref{Fact1}) for $b=3$.

Let us show that $\PP\{N'_k\ge 1\}=o(1)$.
On the event ${\bar {\cal A}}$ (complement event to 
${\cal A}$) we have
\begin{align*}
N'_k
=
\sum_{v\in {\cal V}}\sum_{u\in{\cal V}\setminus\{v\}}
{\mathbb  I}_{\{d'(v)= k\}} 
{\mathbb  I}_{\{d'(u,v)=2\}}=:N''_k.
\end{align*}
 Next we evaluate $\E N''_k$. At this point we need  some more notation. 
 For 
 $u,v\in{\cal V}$ and $\{i,j\}\subset [m]$ let $A_{u,v}(i,j)$ denote the event that $d_\ell(u)>0,d_\ell(v)>0$ for each 
 $\ell\in\{i,j\}$. Furthermore, for $B\subset [m]$
 let $A_v(B)$ denote the event that 
 $d_\ell(v)>0$ for 
 each $\ell\in B$. Let $A^*_v(B)$ denote the event that $d_\ell(v)=0$ for each $\ell\in[m]\setminus B$.
 We have, by  symmetry,
\begin{align}
\nonumber
\E N''_k
&
=
(n)_2\PP\{d'(u,v)=2, d'(v)=k\}.
\\
\nonumber
&
=
(n)_2
\sum_{\{i,j\}\subset[m]}
\PP\{A_{u,v}(i,j),  d'(v)=k\}
\\
\label{2026-01-14}
&
=
(n)_2
\sum_{\{i,j\}\subset[m]}
\sum_{\substack{B\subset [m]\setminus\{i,j\}\\
|B|=k-2}}
\PP\{A_{u,v}(i,j), A_v(B), 
A^*_v(B\cup\{i,j\})\}.
\end{align}
By the independence of $G_1,\dots, G_m$, we have
\begin{align*}
\PP\{A_{u,v}(i,j), A_v(B), 
A^*_v(B\cup\{i,j\})\}
=
\PP\{A_{u,v}(i,j)\}
\PP\{A_v(B)\}
\PP\{A^*_v(B\cup\{i,j\})\}.
\end{align*}
Furthermore, we have
\begin{align*}
&
\PP\{A_{u,v}(i,j)\}
=
\frac{\E(X_i)_2}{(n)_2}
\frac{\E(X_j)_2}{(n)_2},
\qquad
\PP\{A_v(B)\}
=
\prod_{\ell\in B}\frac{x_\ell}{n},
\\
&
\PP\{A^*_v(B\cup\{i,j\})\}
=
\prod_{\ell\in[m]\setminus(B\cup
\{i,j\})}
\left(1-\frac{x_\ell}{n}\right)
=
e^{-\frac{m}{n}\kappa}(1+o(1)).
\end{align*}
The very last approximation  follows by
(\ref{T}) 
and 
(\ref{2026-01-10+6}). It is important to note that the bound $o(1)$
holds uniformly in
$\{i,j\}$ and $B$ with $|B|=k-2$.
Invoking these identities in (\ref{2026-01-14}) we obtain that
\begin{align*}
\E N''_k
\le 
(n)_2
e^{-\frac{m}{n}\kappa}(1+o(1))
\Biggl(
\sum_{\{i,j\}\subset[m]}
\frac{\E(X_i)_2}{(n)_2}
\frac{\E(X_j)_2}{(n)_2}
\Biggr)
\Biggl(
\sum_{\substack{B\subset [m]\setminus\{i,j\}\\
|B|=k-2}}
\prod_{\ell\in B}\frac{x_\ell}{n}
\Biggr)
\end{align*}
Let $I_1$ and $I_2$ denote the quantities in the second-to-last and last parentheses on the right, respectively. Using (\ref{Fact1}) we upperbound  
\begin{align*}
I_1
&
\le 
\frac{1}{2}
\left(\frac{m}{(n)_2}\right)^2
(\E (X_{i_*})_2)^2
=
O\left(\frac{m^2}{n^4}\right),
\\
I_2
&
\le 
\sum_{\substack{B\subset [m]\\
|B|=k-2}}
\prod_{\ell\in B}\frac{x_\ell}{n}
\le
\frac{1}{(k-2)!}
\left(\frac{m}{n}\right)^{k-2}
(\E X_{i_*})^{k-2}
=
O\left(\frac{m^{k-2}}{n^{k-2}}\right).
\end{align*}
We conclude that 
$\E N''_k=O\left(\frac{m^k}{n^k}e^{-\frac{m}{n}\kappa}\right)$. Note that $\frac{m^k}{n^k}e^{-\frac{m}{n}\kappa}=o(1)$ for $\frac{m}{n}\to+\infty$.
Hence $\E N''_k=o(1)$. 
Now Markov's 
inequality yields 
$\PP\{N''_k\ge 1\}\le \E N''_k=o(1)$.
 Finally, 
\begin{align*}
\PP\{N'_k\ge 1\}
=
\PP\{N'_k\ge 1,{\cal A}\}
+
\PP\{N'_k\ge 1, {\bar{\cal A}}\}
\le
\PP\{{\cal A}\}
+
\PP\{N''_k\ge 1\}
=o(1).
\end{align*}

\medskip

Now we evaluate  $\E N_k$ and
$\E N_{*k}$. To  this aim we use approximations
(\ref{2025-09-04}) and
(\ref{2025-06-21_6+r+r}) shown in Lemma \ref{AB+} below. Fix $v\in{\cal V}$.  We have, by symmetry,
\begin{align*}
\E N_k
&
=
n\PP\{d'(v)=k\}
=
n 
\frac{\kappa^k}{k!}\left(\frac{m}{n}\right)^k 
e^{-\kappa\frac{m}{n}}
 (1+o(1))
\asymp
e^{\ln n+k\ln\frac{m}{n}-\kappa\frac{m}{n}},
\\
\E N_{*k}
&
=
n\PP\{d_*'(v)=d'(v)=k\}
=
n 
\frac{\kappa_a^k}{k!}\left(\frac{m}{n}\right)^k e^{-\kappa\frac{m}{n}}
 (1+o(1))
\asymp
e^{\ln n+k\ln\frac{m}{n}-\kappa\frac{m}{n}}.
\end{align*}
Now $\lambda_n(k)\to+\infty$ (respectively $\lambda_n(k)\to-\infty$)
implies $\E N_k\to+\infty$ and $\E N_{*k}\to+\infty$ 
(respectively,  $\E N_k\to 0$ and $\E N_{*k}\to 0$).

Next we  show that $\lambda_n(k)\to+\infty$ implies $N_{*k}=(1+o(1))\E N_{*k}$. To this aim we evaluate the variance of $N_{*k}$ and apply Chebyshev's inequality.
We calculate the expected value 
\begin{align*}
\E \binom{N_{*k}}{2}
&
=
\E
\left(
\sum_{\{u,v\}\subset {\cal V}}
{\mathbb I}_{\{d_*'(u)=d'(u)=k\}}{\mathbb I}_{\{d_*'(v)=d'(v)=k\}}\right)
\\
&
=
\binom{n}{2}\PP\{d_*'(u)=d'(u)=k, d_*'(v)=d'(v)=k\}
\\
&
=
\binom{n}{2}
\frac{\kappa_a^{2k}}{(k!)^2}\left(\frac{m}{n}\right)^{2k} e^{-2\kappa\frac{m}{n}}
 (1+o(1)). 
\end{align*}
In the last step we invoked (\ref{2025-6-21_9+r}). 
Combining expressions for $\E N_{*k}$ and 
$\E \binom{N_{*k}}{2}$ above we evaluate the variance
\begin{align*}
\Var N_{*k}
=
\E N_{*k}^2
-
(\E N_{*k})^2
=
2\E \binom{N_{*k}}{2}
+
\E N_{*k}
-
(\E N_{*k})^2
=
o((\E N_{*k})^2).
\end{align*}
For $\E N_{*k}\to+\infty$ the bound
$\Var N_{*k}=o((\E N_{*k})^2)$ implies $N_{*k}=(1+o_P(1))\E N_{*k}$, by Chebyshev's inequality.
\end{proof}

\begin{proof}[Proof of Corollary \ref{col2}]  
We note that
(\ref{a}),
(\ref{condition_X^2}) imply $\kappa\asymp 1$. 
Now 
(\ref{2026-01-14+2}) 
implies
$m\asymp n\ln n$.

Proof of (i). Left inequality of (\ref{2026-01-14+2}) 
implies 
$\lambda_n(r)\to-\infty$
for $r=0, 1,\dots k-1$.
 Now relation $\PP\{N_r=0\}=1-o(1)$ follows from Lemma \ref{L1}.

Proof of (ii).  
Right inequality of (\ref{2026-01-14+2}) 
implies 
$\lambda_n(r)\to+\infty$
for any $r\ge k$.
Now  Lemma \ref{L1} implies 
$N_{*r}=(1+o_P(1))\E N_{*r}$ and $\E N_{*r}\to+\infty$. Hence for any $A>0$ we have $\PP\{N_{*r}>A\}=1-o(1)$. Furthermore, Lemma \ref{L1} shows
$\PP\{N'_r=0\}=1-o(1)$.
Finally, event $N'_r=0$ 
implies that each $v$ with $d'(v)=r$ is a 
center of an $r$-blossom.

Proof of (iii). 
By (i) whp there is no vertex $v$ with $d'(v)<k$. 
By (ii) there is a large number ($=N_{*k}$) of vertices $v$ with $d(v)=\sum_{i\in[m]}d_i(v)=ka$. 
We claim that there is no vertex $w$ with $d(w)<ka$.  Indeed, for
$w$ with $d'(w)\in\{k,k+1,\dots, 2k\}$ we use the fact (shown in (ii)) that $w$ is the center of a $d'(w)$-blossom to bound the degree $d(w)$ from below
$d(w)\ge d'(w) a \ge ka$. For $w$ with $d'(w)>2k$ we use the fact (shown in Lemma \ref{L1}) that whp
$d'(w,u)\le 2$, for any  $u\in {\cal V}\setminus\{w\}$.
In particular, $d'(w,u)\le 2$ for each $u\in {\cal N}_w$, where ${\cal N}_w$ denotes the set of neighbours of $w$ in
$G_{[n,m]}$.  Now the chain of inequalities
\begin{align*}
2d(w)
=
\sum_{u\in{\cal N}_w}2
\ge
\sum_{u\in{\cal N}_w}d'(w,u)
= 
\sum_{i\in[m]}d_i(v)
\ge 
d'(w) a
\ge 2ka
\end{align*}
implies $d(w)\ge ka$. 
\end{proof}

\section{Auxiliary results}

\subsection{Degree probabilities}

First, we introduce some shorthand notation and make several observations.
We denote
\begin{align*}
x_i
&
=
\E X_i,
\qquad
x_{a,i}
=
\E X_{a,i},
\qquad
z_{a,i}=\E(X_{a,i})_2,
\qquad
z_i=\E(X_i)_2,
\\
T
&
=
\PP\{d'(1)=0\},
\qquad
H
=
\PP\{d'(1)=d'(2)=0\}
\end{align*}
and observe that $x_1+\cdots+x_m=m\E X_{i_*}=m\kappa$ and
$x_{a,1}+\cdots+x_{a,m}=m\E X_{i_*}=m\kappa_a$. 
Using
$1-z=e^{\ln(1-z)}=e^{-z+O(z^2)}$ for $z=o(1)$ and  (\ref{condition_X^2})  we approximate
for $m=o(n^2)$
\begin{align}
\label{T}
T
&
=
\prod_{i\in[m]}
\PP\{d_i(1)=0\}
=
\prod_{i\in[m]}
\left(1-\frac{x_i}{n}\right)
=
e^{-\frac{m}{n}\kappa}(1+o(1)),
\\
\nonumber
H
&
=
\prod_{i\in[m]}
\PP\{d_i(1)=d_i(2)=0\}
=
\prod_{i\in[m]}
\E
\left(
\left(1-\frac{X_i}{n}\right)
\left(1-\frac{X_i}{n-1}\right)
\right)
\\
\label{H}
&
=
\prod_{i\in[m]}
\left(
1-2\frac{x_i}{n}+\frac{z_i}{(n)_2}\right)
=
e^{-2\frac{m}{n}\kappa}(1+o(1)).
\end{align}
Furthermore,  for $m=O(n\ln n)$  we have 
\begin{align}
\label{2026-01-10+6}
\max_{i\in [m]}x_i^2=O(n\ln n),
\qquad 
\max_{i\in [m]}z_i=O(n\ln n).
\end{align}
These bounds follow from  (\ref{condition_X^2}) via the chain of inequalities
\begin{align*}
\max_{i\in[m]} (\E X_i)^2
\le 
\max_{i\in[m]} \E X_i^2
\le 
\sum_{i\in [m]}\E X_{i}^2
=
m\E X_{i_*}^2 =O(m)=O(n\ln n).
\end{align*}
Next, using  (\ref{2026-01-10+6}) 
we approximate
 for any fixed integer  $b\ge 1$ 
as $n,m\to+\infty$
\begin{align}
\label{2026-01-10+1}
&
\max_{B\subset [m],\, |B|\le b} \prod_{i\in B}\left(1-\frac{x_i}{n}\right)
=
1-O\left(\frac{\sqrt{\ln n}}{\sqrt{n}}\right),
\\
\label{2026-01-10+2}
&
\max_{B\subset [m],\, |B|\le b} \prod_{i\in B}\left(1-2\frac{x_i}{n}+\frac{z_i}{(n)_2}\right)
=
1-O\left(\frac{\sqrt{\ln n}}{\sqrt{n}}\right).
\end{align}

We will use the following simple inequality.
Let $a_1,\dots, a_m$ be non-negative real numbers. Denote ${\bar a}=m^{-1}(a_1+\cdots+a_m)$.
For an integer $b\ge 2$ we have
\begin{align}
\label{Fact1}
b!\sum_{B\in\binom{[m]}{b}}\prod_{i\in B}a_i
=
(a_1+\cdots+a_m)^b
-
R
\le 
(a_1+\cdots+a_m)^b,
\end{align}
where
\begin{align*}
0
\le 
R
\le
\frac{(b)_2}{2}
(a_1^2+\cdots+a_m^2)
(a_1+\cdots+a_m)^{b-2}.
\end{align*}
Proof of (\ref{Fact1}). 
By the multinomial formula, we have
\begin{align*}
(a_1+\cdots+a_m)^b
=
b!\sum_{B\in\binom{[m]}{b}}\prod_{i\in B}a_i
+R,\
\end{align*}
where
\begin{align*}
R
&
=
\sum_{i=1}^m
a_i^2
\sum_{p_1+\dots+p_m=b-2}
\frac{b!}{p_1!\cdots p_{i-1}!(p_i+2)!p_{i+1}!\cdots p_m!}
\prod_{i=1}^m
a_i^{p_i}
\\
&
=
\sum_{i=1}^m
a_i^2
\sum_{p_1+\dots+p_m=b-2}
\frac{(b)_2}{(p_i+2)_2}
\frac{(b-2)!}{p_1!\cdots p_{i-1}!p_i!p_{i+1}!\cdots p_m!}
\prod_{i=1}^m
a_i^{p_i}
\\
&
\le 
\frac{(b)_2}{2}
\sum_{i=1}^m
a_i^2
\sum_{p_1+\dots+p_m=b-2}
\frac{(b-2)!}{p_1!\cdots p_{i-1}!p_i!p_{i+1}!\cdots p_m!}
\prod_{i=1}^m
a_i^{p_i}
\\
&
=
\frac{(b)_2}{2}
(a_1^2+\cdots+a_m^2)
(a_1+\dots+a_m)^{b-2}.
\end{align*}

\begin{lem}\label{AB+} 
Let $a\ge 1$ and $k\ge 0$ be integers.
Let $m,n\to+\infty$.
 Assume that $m=m(n)=o(n\ln^2n)$ and $n\ln n=O(m)$.  Assume that (\ref{condition_X^2}) holds. Then
 \begin{align}
  \label{2025-09-04}
 \PP\{d'(1)=k\}
 &
 =
 \frac{\kappa^k}{k!}\left(\frac{m}{n}\right)^k e^{-\kappa\frac{m}{n}}
 (1+o(1)),
 \\
 \label{2025-06-21_6+r+r}
 \PP\{d_*'(1)=d'(1)=k\}
 &
 =
 \frac{\kappa_a^k}{k!}
 \left(\frac{m}{n}\right)^k
  e^{-\kappa\frac{m}{n}}
 (1+o(1)).
 \end{align}
 Assume, in addition, that $\liminf\kappa_a>0$.
 For $k\ge 0$ we have 
 \begin{align}
  \label{2025-6-21_9+r}
 \PP\{d_*'(1)=d'(1)=k,\, d_*'(2)=d'(2)=k\}
& =\frac{\kappa_a^{2k}}{(k!)^2}
\left(\frac{m}{n}\right)^{2k}
 e^{-2\kappa\frac{m}{n}}
 (1+o(1)).
 \end{align}
\end{lem}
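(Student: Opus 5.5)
Since $G_1,\dots,G_m$ are independent, the indicators ${\mathbb I}_{\{d_i(1)>0\}}$, $i\in[m]$, are independent Bernoulli variables with success probabilities $p_i=\PP\{d_i(1)>0\}=x_i/n$. Hence $d'(1)$ is a Poisson--binomial variable. We write
\begin{align*}
\PP\{d'(1)=k\}
=
\sum_{B\in\binom{[m]}{k}}
\prod_{i\in B}\frac{x_i}{n}
\prod_{i\in[m]\setminus B}\Bigl(1-\frac{x_i}{n}\Bigr).
\end{align*}
The second product equals $T/\prod_{i\in B}(1-x_i/n)$. By (\ref{T}) and (\ref{2026-01-10+1}) this is $e^{-\kappa m/n}(1+o(1))$, uniformly over $|B|=k$. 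It therefore remains to show
\begin{align*}
\sum_{B\in\binom{[m]}{k}}\prod_{i\in B}\frac{x_i}{n}
=
\frac{1}{k!}\Bigl(\frac{m\kappa}{n}\Bigr)^k(1+o(1)).
\end{align*}
For this we use (\ref{Fact1}) with $a_i=x_i/n$. The upper bound is immediate. For the lower bound, the remainder satisfies $R\le\binom{k}{2}\sum_i x_i^2n^{-2}(m\kappa/n)^{k-2}$. By (\ref{condition_X^2}) and Cauchy--Schwarz, $\sum_i x_i^2\le m\E X_{i_*}^2=O(m)$. Since $m\kappa/n\asymp\ln n\to\infty$, we get $R/(m\kappa/n)^k=O(n^{-1}\ln^{-2}n\cdot n)=o(1)$. (The case $k\le1$ is trivial.) This proves (\ref{2025-09-04}).

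For (\ref{2025-06-21_6+r+r}) we argue the same way, replacing the indicators in $B$ by $\PP\{d_i(1)=a\}=x_{a,i}/n$, where $x_{a,i}=\E X_i(a)$. The event $\{d'(1)=d'_*(1)=k\}$ means: for $i\in B$ we have $d_i(1)=a$, and for $i\notin B$ we have $d_i(1)=0$. The product over the complement is handled exactly as before. The sum over $B$ is $\frac{1}{k!}(m\kappa_a/n)^k(1+o(1))+O$(the $R$-error). The $R$-error is controlled by $\sum x_{a,i}^2\le\sum x_i^2=O(m)$. Here a subtlety arises if $\kappa_a$ is small: the relative error is then $O(n^{-1}\cdot m/(\kappa_a^2(m/n)^2))$. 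This is $o(1)$ as long as $\kappa_a\gg(\ln n)^{-1/2}$. For (\ref{2025-06-21_6+r+r}) without the assumption on $\kappa_a$, one may state the asymptotic as $(m\kappa_a/n)^k/k!+o((m/n)^k)$. I expect to lean on $\liminf\kappa_a>0$ in applications (as in (\ref{a})), so that the relative form holds.

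For (\ref{2025-6-21_9+r}) the point is that the two vertices use joint community indicators, and these are dependent within one community. We expand over ordered pairs of sets $(B_1,B_2)$, $|B_1|=|B_2|=k$, specifying which communities cover vertex $1$ and vertex $2$ with degree $a$; all other communities must avoid both vertices.
\begin{itemize}
\item For the disjoint part $B_1\cap B_2=\emptyset$, each $i\in B_1$ contributes $\PP\{d_i(1)=a,d_i(2)=0\}$. This is at most $x_{a,i}/n$ and at least $x_{a,i}/n-z_i/(n)_2$. The complement product is $H/\prod_{B_1\cup B_2}(\cdots)$, which equals $e^{-2\kappa m/n}(1+o(1))$ by (\ref{H}) and (\ref{2026-01-10+2}).
\item The corrections $z_i/(n)_2$ relative to $x_{a,i}/n$ sum to $O(m/n^2)$ against the main term $m\kappa_a/n$. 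This is negligible.
\item Summing over disjoint pairs, we apply (\ref{Fact1}) twice. Removing the non-disjointness constraint costs a factor $O(k^2\max_i x_{a,i}/(m\kappa_a))$, i.e.\ a relative error of order $O(\sqrt{n\ln n}/m)=o(1)$.
\item Pairs with $B_1\cap B_2\ne\emptyset$ require some community to contain both vertices. This costs $z_i/(n)_2$ in place of $(x_{a,i}/n)^2$ summed. Their total contribution relative to the main term is $O\bigl(m\E X_{i_*}^2n^{-2}/(m/n)^2\bigr)=O(1/m)$, again negligible.
\end{itemize}

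The main obstacle is the bookkeeping in this last step: keeping the joint probabilities in a single community under control, and verifying uniformity of the $(1+o(1))$ factors. This is where $\liminf\kappa_a>0$ is needed, so that the relative errors are measured against a main term of order $(m/n)^{2k}$.
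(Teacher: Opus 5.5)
This is essentially the paper's proof: you factor out $T$ (resp.\ $H$) using (\ref{2026-01-10+1}) (resp.\ (\ref{2026-01-10+2})), handle the elementary symmetric sums with (\ref{Fact1}), and your split of (\ref{2025-6-21_9+r}) into private and shared covering communities is exactly the paper's $(B_1,B_2,B_3)$ decomposition, with the disjointness correction controlled through $\max_i x_{a,i}=O(\sqrt{n\ln n})$ as there. Your caveat about $\kappa_a$ is well founded: the paper tacitly uses a lower bound on $\kappa_a$ in (\ref{2025-06-21_6+r+r}), and on $\kappa$ in (\ref{2025-09-04}), which your own step $m\kappa/n\asymp\ln n$ also relies on; however, since $\sum_i a_i^2=n^{-2}\sum_i x_{a,i}^2$ (you wrote $n^{-1}$), the relative error from (\ref{Fact1}) is $O(1/(m\kappa_a^2))$, so the argument actually needs $m\kappa_a^2\to\infty$ rather than $\kappa_a\gg(\ln n)^{-1/2}$.
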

\begin{proof}[ Proof of Lemma \ref{AB+}]
For $k=0$ (\ref{2025-09-04}), (\ref{2025-06-21_6+r+r}) follow from (\ref{T}) via identities
\begin{align*}
\PP\{d'(1)=0\}
=
\PP\{d'(1)=d'_*(1)=0\}
= 
T.
\end{align*}
Similarly,  (\ref{2025-6-21_9+r}) follows from (\ref{H})
via identity
\begin{align*}
\PP\{d_*'(1)=d'(1)=0,\, d_*'(2)=d'(2)=0\}
=
\PP\{d'(1)=d'(2)=0\}=H.
\end{align*}

For the rest of the proof we assume that $k\ge 1$.

Proof of (\ref{2025-09-04}).
 We apply the total probability formula and use the independence of 
 $G_1,\dots, G_m$:
 \begin{align*}
 \PP\{d'(1)=k\}
&
=
\sum_{B\in\binom{[m]}{k}}
\PP
\left\{
d_i(1)>0,\, i\in B
\
\ 
{\text{and}}
\
\
d_j(1)=0, \, j \in[m]\setminus B
\right\}
\\
\nonumber
&
=
\sum_{B\in\binom{[m]}{k}}
\left(
\prod_{i\in B}
\PP\{d_i(1)>0\}
\right)
\left(
\prod_{i\in[m]\setminus B}
\PP\{d_j(1)=0\}
\right)
\\
\nonumber
&=
\sum_{B\in\binom{[m]}{k}}
\left(
\prod_{i\in B}
\frac{x_{i}}{n}
\right)
\left(
\prod_{j\in[m]\setminus B}
\left(
1
-
\frac{x_j}{n}
\right)
\right)
\\
&
=
T
\sum_{B\in\binom{[m]}{k}}
\prod_{i\in B}\frac{x_{i}}{n}
\left( 1-\frac{x_i}{n}\right)^{-1}.
\end{align*}
In view of (\ref{2026-01-10+1}) we 
have $\prod_{i\in B}
\left( 1-\frac{x_i}{n}\right)^{-1}=
1+o(1)$ uniformly 
in  $B$.
Hence,
 \begin{align*}
 \PP\{d'(1)=k\}
=
(1+o(1))
T\frac{1}{n^k}
\sum_{B\in\binom{[m]}{k}}
\prod_{i\in B}x_{i}
\end{align*}
Finally, 
invoking 
 (\ref{T}) and approximation 
$\sum_{B\in\binom{[m]}{k}}
\prod_{i\in B}x_{i}
=
\frac{m^k}{k!}\kappa^k+O(m^{k-1})$, which follows by 
(\ref{Fact1}),  we obtain 
 (\ref{2025-09-04}).

Proof of (\ref{2025-06-21_6+r+r}). We proceed similarly as in the proof of (\ref{2025-09-04})
above. We have
\begin{align*}
\PP\{d'_*(1)=d'(1)=k\}
&
=
\sum_{B\in\binom{[m]}{k}}
\PP\{d_i(1)=a,\, i\in B
\
\
{\text{and}}
\
\
d_j(1)=0,
\, j\in [m]\setminus B\}
\\
\nonumber
&
=
\sum_{B\in\binom{[m]}{k}}
\left(
\prod_{i\in B}
\PP\{d_i(1)=a\}
\right)
\left(
\prod_{i\in[m]\setminus B}
\PP\{d_j(1)=0\}
\right)
\\
\nonumber
&=
\sum_{B\in\binom{[m]}{k}}
\left(
\prod_{i\in B}
\frac{x_{a,i}}{n}
\right)
\left(
\prod_{j\in[m]\setminus B}
\left(
1
-
\frac{x_j}{n}
\right)
\right)
\\
\nonumber
&
=
(1+o(1))
T\frac{1}{n^k}
\sum_{B\in\binom{[m]}{k}}
\prod_{i\in B}x_{a,i}
\\
\nonumber
&
=
 (1+o(1))\frac{m^k}{n^k}\frac{\kappa_a^k}{k!}
 e^{-\frac{m}{n}\kappa}.
\end{align*}

Proof of (\ref{2025-6-21_9+r}). Denote for short
$p=\PP\{d'_*(1)=d'(1)=d'_*(2)=d'(2)=k\}$.
Given mutually disjoint sets $B_1,B_2,B_3\subset [m]$, let 
$I_{B_1,B_2,B_3}$
denote the event that
\begin{align*}
&
d_i(1)=d_i(2)=a
\
 \forall i\in B_1,
\quad
d_j(1)=a, 
\
d_j(2)=0
\
 \forall j\in B_2,
\\
&
d_h(1)=0, 
\ 
d_h(2)=a
\
 \forall h\in B_3,
\quad
d_\ell(1)=d_\ell(2)=0
\
\forall \ell\in [m]\setminus (B_1\cup B_2\cup B_3).
 \end{align*}
By the total probability formula we have 
\begin{align*}
p
=
\sum_{s=0}^k{\bar p}_s,
\qquad
{\bar p}_s
=
\sum_
{\substack{B_1\subset [m]\\ |B_1|=s}}
\
\sum_{\substack{B_2\subset [m]\setminus B_1\\
|B_2|=k-s}}
\
\sum_{\substack{B_3\subset [m]\setminus (B_1\cup B_2)\\
|B_3|=k-s}}
\PP\{I_{B_1,B_2,B_3}\}.
\end{align*} 
Furthermore, by the independence of $G_1,\dots, G_m$,
we factorize the probability 
\begin{align}
\nonumber
\PP\{I_{B_1,B_2,B_3}\}
&=
\left(
\prod_{i\in B_1}
p_i
\right)
\times
\left(
\prod_{j\in B_2}p_j(1,2)
\right)
\times
\left(
\prod_{h\in B_3}p_h(2,1)
\right)
\times
\prod_{\ell\in [m]\setminus(B_1\cup B_2\cup B_3)}
q_\ell
\\
\label{2026-01-10+4}
&
=:
P_1(B_1)
\times
P_2(B_2)
\times
P_3(B_3)
\times
Q(B_1,B_2,B_3).
\end{align}
Here we denote
 \begin{align*}
&
p_i=\PP\{d_i(1)=d_i(2)=a\},
\qquad
q_i=\PP\{d_i(1)=d_i(2)=0\},
\\
&
p_i(1,2)=\PP\{d_i(1)=a, \,d_i(2)=0\},
\qquad
p_i(2,1)=\PP\{d_i(1)=0,\, d_i(2)=a\}.
\end{align*}
A calculation shows that
\begin{align*}
p_i
&
=
\E \frac{(X_{a,i})_2}{(n)_2}=\frac{z_{a,i}}{(n)_2},
\qquad
q_i
=
\E\frac{(n-X_i)_2}{(n)_2}
=1-2\frac{x_i}{n}+\frac{z_i}{(n)_2},
\\
p_i(1,2)
&
=
p_i(2,1)
=
\E\frac{X_{a,i}(n-X_i)}{(n)_2}
=
\frac{x_{a,i}}{n-1}-\frac{\E (X_iX_{a,i})}{(n)_2}
\le 
\frac{x_{a,i}}{n-1}.
\end{align*}
We also note that in view of (\ref{2026-01-10+2})  the last term on the right of (\ref{2026-01-10+4})
\begin{align}
\label{2026-01-12+5}
Q(B_1,B_2,B_3)
=
H\prod_{\ell\in B_1\cup B_2\cup B_3}
\left(
1-2\frac{x_i}{n}+\frac{z_i}{(n)_2}
\right)^{-1}
=
H
(1+o(1)),
\end{align}
where the bound $o(1)$ holds
uniformly over $B_1,B_2,B_3$ satisfying $|B_1\cup B_2\cup B_3|\le 2k$.

In the remaining part of the proof we show that
\begin{align}
\label{2026-01-10+5}
{\bar p}_0
=
\frac{\kappa_a^{2k}}{(k!)^2}
\left(
\frac{m}{n}
\right)^{2k}
H
\left(
1
+
o(1)
\right)
\qquad
{\text{and}}
\qquad
\sum_{s=1}^k {\bar p}_s
=
O\left(H\frac{m^{2k-1}}{n^{2k}}\right).
\end{align}
These relations combined with (\ref{H}) imply
(\ref{2025-6-21_9+r}).

Before the proof  of
(\ref{2026-01-10+5}) 
we introduce some notation.
For an integer $b\ge 1$ we denote
\begin{align*}
S_{1,b}
=\sum_
{\substack{B_1\subset [m],\\ |B_1|=b}}
\prod_{i\in B_1}
p_i,
\quad
\
S_{2,b}=
\sum_{\substack{B_2\subset [m]\\
|B_2|=b}}
\prod_{j\in B_2}p_j(1,2),
\quad
\
S_{3,b}=
\sum_{\substack{B_3\subset [m]\\
|B_3|=b}}
\prod_{h\in B_3}p_h(2,1).
\end{align*}
Note that $S_{2,b}=S_{3,b}$.
Next we establish several useful facts about the sums $S_{1,b}$ and $S_{2,b}$.

Using
$p_i=\frac{z_{a,i}}{(n)_2}$ 
and
$p_i(1,2)
=
p_i(2,1)
\le 
\frac{x_{a,i}}{n-1}$, 
and (\ref{Fact1}) we upperbound
\begin{align}
\label{2026-01-12+2}
S_{1,b}
\le 
\frac{1}{b!}
\left(\frac{m}{(n)_2}\right)^b
\left(
\E(X_{a,i_*})_2
\right)^b,
\qquad
S_{r,b}
\le 
\frac{1}{b!}
\left(\frac{m}{n-1}\right)^b
\left(
\E X_{a,i_*}
\right)^b,
\quad
r=2,3.
\end{align}
 Moreover, 
the second relation can be upgraded
to the approximate identity (recall that $\E X_{a,i_*}=\kappa_a$)
\begin{align}
\label{2026-01-12+3}
S_{2,b}
=
 \frac{\kappa_a^b}{b!}\left(\frac{m}{n}\right)^b
 \left(1+O(n^{-1})\right).
\end{align}
Let us show (\ref{2026-01-12+3}). 
Our conditions $\liminf_n\kappa_a>0$ and (\ref{condition_X^2}) imply 
$\kappa_a^{-1}\E(X_{i_*}X_{a,i_*})=O(1)$.
Now, for $b=1$ we have 
\begin{align*}
S_{2,1}
=
\sum_{j\in [m]}p_j(1,2)
=m\left(\frac{\kappa_a}{n-1}-\frac{\E (X_{i_*}X_{a,i_*})}{(n)_2}\right)
=
\kappa_a
\frac{m}{n}
\left(
1
+
O(n^{-1})
\right).
\end{align*}
For $b\ge 2$ 
 we combine 
 the upper bound 
$S_{2,b}
\le
\frac{\kappa_a^b}{b!}
\left(\frac{m}{n}\right)^b
\left(1+O\left(\frac{1}{n}\right)\right)$,
which 
follows from the the second inequality of 
(\ref{2026-01-12+2}), with a matching lowe bound. We show
the
lower bound using 
(\ref{Fact1}), we have
\begin{align*}
S_{2,b}
\ge \frac{S}{b!}-\frac{R}{2(b-2)!},
\end{align*}
where
\begin{align*}
S=\left(
\sum_{i=1}^mp_i(2,1)\right)^b
\qquad
{\text{and}}
\qquad
R=\left(
\sum_{i=1}^mp_i(2,1)\right)^{b-2}
\sum_{i=1}^mp_i^2(2,1).
\end{align*}
Invoking 
$p_i(1,2)= \frac{x_{a,i}}{n-1}-\frac{\E (X_iX_{a,i})}{(n)_2}$
we write $S$ in the form 
\begin{align*}
S
&
=
\left(\frac{m}{n-1}\right)^b
\left(\kappa_a-\frac{\E(X_{i_*}X_{a,i_*})}{n}\right)^b
=
\left(\frac{m}{n}\right)^b
\kappa_a^b 
\left(1+O(n^{-1})\right).
\end{align*}
Furthermore, using  
$p_i(1,2)\le \frac{x_{a,i}}{n-1}$ 
we upper bound
\begin{align*}
R
\le 
\left(\frac{m}{n-1}\kappa_a\right)^{b-2}
\frac{m}{(n-1)^2}\E X^2_{a,i_*}
=
O\left(\kappa_a^{b-2}\E X_{a,i_*}^2
\frac{m^{b-1}}{n^b}\right)
=
O\left(\kappa_a^b
\frac{m^{b-1}}{n^b}\right).
\end{align*}
In the last step we used the bound
$\kappa_a^{-2}\E X_{a,i_*}^2=O(1)$, which follows from 
our conditions
(\ref{condition_X^2})  and
$\liminf_n\kappa_a>0$.
We arrive to the bound 
$S_{2,b}
\ge 
\frac{\kappa_a^b}{b!}
\left(\frac{m}{n}\right)^b
\left(
1
-
O(n^{-1})
\right)$
thus completing the proof of
(\ref{2026-01-12+3}).

Let us show (\ref{2026-01-10+5}) for $k=1$. 
We have
\begin{align*}
{\bar p}_1
=
\sum_{i\in [m]}p_iQ(\{i\},\emptyset,\emptyset)
=
\sum_{i\in [m]}p_iH(1+o(1))
=
S_{1,1}H(1+o(1))
=
O\left(\frac{m}{n^2}H\right).
\end{align*}
Here we approximated $Q(\{i\},\emptyset,\emptyset)
=
H(1+o(1))$ by  (\ref{2026-01-12+5}) 
and then bounded $S_{1,1}$ by  (\ref{2026-01-12+2}).
We similarly approximate
\begin{align*}
{\bar p}_0
&
=
\sum_{j\in[m]}p_j(1,2)\sum_{h\in[m]\setminus\{j\}}p_h(2,1)Q(\emptyset,\{j\},\{h\})
\\
&
=
\sum_{j\in[m]}p_j(1,2)\sum_{h\in[m]\setminus\{j\}}p_h(2,1)H(1+o(1))
\\
&
=
\left(
S_{2,1}
S_{3,1}
-
\sum_{j\in[m]}p_j(1,2)p_j(2,1)
\right)
H(1+o(1)).
\end{align*}
Invoking the approximation $S_{2,1}S_{3,1}=S_{2,1}^2
=\kappa_a^2(m/n)^2(1+o(1))$, see (\ref{2026-01-12+3}), and bound
\begin{align*}
\sum_{j\in[m]}p_j(1,2)p_j(2,1)
\le \sum_{j\in[m]}\frac{x_{a,j}^2}{(n-1)^2}
=
m\frac{\E X_{a,i_*}^2}{(n-1)^2}
=
O\left(\frac{m}{n^2}\right)
\end{align*}
we obtain 
${\bar p}_0
=
\kappa_a^2(m/n)^2H(1+o(1))+O\left(\frac{m}{n^2}H\right)$
thus arriving to  (\ref{2026-01-10+5}).

Now  we show (\ref{2026-01-10+5}) for $k\ge 2$. 
Let us upper bound ${\bar p}_s$ for $s\ge 1$.
By increasing the range of summation we upper bound
\begin{align*}
{\bar p}_s
&
\le
\sum_
{\substack{B_1\subset [m]\\ |B_1|=s}}
\
\sum_{\substack{B_2\subset [m]\\
|B_2|=k-s}}
\
\sum_{\substack{B_3\subset [m]\\
|B_3|=k-s}}
P_1(B_1)P_2(B_2)P_3(B_3)Q(B_1,B_2,B_3)
\\
&
=
S_{1,s}S_{2,k-s}S_{3,k-s}H(
1+o(1)).
\end{align*}
In the last step we invoked (\ref{2026-01-12+5}).
Now (\ref{2026-01-12+2}) implies
${\bar p}_s=O\left(H\frac{m^{2k-s}}{n^{2k}}\right)$. 
Consequently, we obtain 
$\sum_{s=1}^k{\bar p}_s
=O\left(H\frac{m^{2k-1}}{n^{2k}}\right)$.

Let us show the first relation of (\ref{2026-01-10+5}).
We write ${\bar p}_0$ in the form
${\bar p}_0
=
H{\tilde p}_0+{\tilde R}$,
where
\begin{align*}
{\tilde p}_0
&
=
\sum_{\substack{B_2\subset [m]\\
|B_2|=k}}
P_2(B_2)
\sum_{\substack{B_3\subset [m]\setminus B_2\\
|B_3|=k}}
P_3(B_3),
\\
{\tilde R}
&
=
\sum_{\substack{B_2\subset [m]\\
|B_2|=k}}
P_2(B_2)
\sum_{\substack{B_3\subset [m]\setminus B_2\\
|B_3|=k}}
P_3(B_3) (Q(\emptyset, B_2,B_3)-H).
\end{align*}
Using  (\ref{2026-01-12+5}) we estimate
${\tilde R}=o\left({\tilde p}_0H
\right)$.
Furthermore, using
(\ref{2026-01-12+2}) we estimate
\begin{align*}
{\tilde p}_0
\le 
S_{2,k}S_{3,k}
=
O
\left(\frac{m^{2k}}{n^{2k}}\right).
\end{align*}
We conclude that 
${\tilde R}
=
o\left(H\frac{m^{2k}}{n^{2k}}
\right)$.
Let us evaluate ${\tilde p}_0$.
We have
\begin{align}
\label{2026-01-26}
{\tilde p}_0
&=
\sum_{\substack{B_2\subset [m]\\
|B_2|=k}}
P_2(B_2)
\Biggl(
S_{3,k}
-
\sum_{\substack{B_3\subset [m],|B_3|=k\\
B_3\cap B_2\not=\emptyset}}
P_3(B_3)
\Biggr)
=
S_{2,k}S_{3,k}-{\bar R},
\end{align}
where 
${\bar R}
=
R_1+\dots+R_k$, 
and where
\begin{align*}
R_\ell
=
\sum_{\substack{B_2\subset [m]\\
|B_2|=k}}
P_2(B_2)
{\tilde S}_\ell(B_2),
\qquad
{\tilde S}_\ell(B_2)
=
\sum_{\substack{B_3\subset [m],|B_3|=k\\
|B_3\cap B_2|=\ell}}
P_3(B_3).
\end{align*}
Next, we evaluate the product $S_{2,k}S_{3,k}=S_{2,k}^2$ 
using (\ref{2026-01-12+3}) and show below that
${\bar R}=o
\left(
\left(\frac{m}{n}\right)^{2k-1}
\right)$. Now (\ref{2026-01-26}) implies 
 the first relation of 
(\ref{2026-01-10+5}).

It remains to show that ${\bar R}=o
\left(
\left(\frac{m}{n}\right)^{2k-1}
\right)$. Let us consider the sum
${\tilde S}_\ell(B_2)$.
Given $B_2$, we split $B_3=A\cup D$, 
where $A=B_3\cap B_2$  and $D\cap B_2=\emptyset$.
Clearly, $|A|=\ell$ 
and  $|D|=k-\ell$. 
Using (\ref{2026-01-10+6}) we upperbound
\begin{align*}
P_3(A)
=
\prod_{h\in A}p_h(2,1)
\le 
\prod_{h\in A}\frac{x_{a,h}}{n-1}
\le 
\prod_{h\in A}\frac{x_{h}}{n-1}
=
O
\left(
\left(
\frac{\sqrt{\ln n}}{\sqrt{n}}
\right)^\ell
\right)
\end{align*}
 uniformly in $A\subset [m]$, $|A|=\ell$. 
 It follows from the identity $P_3(B_3)=P_3(A)P_3(D)$
 that
\begin{align*}
{\tilde S}_\ell(B_2)
&
=
\sum_{\substack{A\subset B_2\\
|A|=\ell}}
P_3(A)
\sum_{\substack{D\subset [m]\setminus B_2\\
|D|=k-\ell}}
P_3(D)
\le
\sum_{\substack{A\subset B_2\\
|A|=\ell}}
P_3(A)
S_{3,k-\ell}
\\
&
\le 
O
\left(
\left(
\frac{\sqrt{\ln n}}{\sqrt{n}}
\right)^\ell
\right)
\binom{k}{\ell}
S_{3,k-\ell}.
\end{align*}
Since the bound hols uniformly over $B_2$, we have
\begin{align*}
R_\ell
=
O
\left(
\left(
\frac{\sqrt{\ln n}}{\sqrt{n}}
\right)^\ell
\right)
\binom{k}{\ell}
S_{3,k-\ell}
S_{2,k}
=
O
\left(
\left(
\frac{\sqrt{\ln n}}{\sqrt{n}}
\right)^\ell
\right)
O\left(\left(\frac{m}{n}\right)^{2k-\ell}
\right).
\end{align*}
In the last step we invoked the upper bounds for $S_{3,k-\ell}$ 
and
$S_{2,k}$ 
shown in (\ref{2026-01-12+2}).
It follows that ${\bar R}=R_1+\cdots+R_k=O
\left(
\frac{\sqrt{\ln n}}{\sqrt{n}}
\left(\frac{m}{n}\right)^{2k-1}
\right)$.
\end{proof}

\subsection{Inequalities related to expansion property}

 Recall that  ${\cal K}_n$ denote the complete graph on the 
 vertex 
set ${\cal V}=[n]$. 
 Given  graph $F=(V_F,E_F)$ (with $|V_F|\le n$ vertices), let $V^*_F$ be a subset of ${\cal V}$ selected uniformly at random from the family of 
 subsets of ${\cal V}$ of 
 size $|V_F|$.
 Given $V^*_F$ let $\pi:V^*_F\to V_F$ be a bijection selected uniformly at random.
 The subgraph $F^*=(V^*_F,E^*_F)$ of ${\cal K}_n$ where any 
 two vertices $x,y\in V^*_F$ are adjacent whenever  
$\pi(x), \pi(y)$ are adjacent in $F$ is called a random 
copy of $F$ (in ${\cal K}_n$).
We call the map $F\to F^*$ a random embedding.
We call a graph  $F$ basic if it is a union of independent 
($=$ non-incident) edges and/or paths of length $2$. Hence 
the minimal degree of a basic graph is one.
Paths of length $2$ are also called open triangles. A subgraph $F$ of a graph is called basic if it is a spanning subgraph and $F$ is basic.  We say that a subgraph of 
 ${\cal K}_n$ connects vertex sets $A, B\subset {\cal V}$,
 if it contains an edge with one endpoint in $A$ and the other one in $B$.

\begin{lem}\label{basic} Let $n, r, x$ be positive integers such that $2\le x\le n$  and $1\le r\le n/10$. 
Let $F$ be a graph on $x$ vertices having minimal degree
$\delta(F)\ge 1$. Let $F^*$ be a random copy of $F$ in 
${\cal K}_n$. We have
\begin{align}
\label{basic+}
\PP\left\{
F^* {\text{connects   sets\ }} [r] {\text{\ and \ }} [n]\setminus[r]
\right\}
&
\ge 
\frac{r}{n}
\left(
1-\frac{r}{n}
\right)
x
-
\frac{1}{2}\frac{r^2}{n^2}
x^2.
\end{align}
\end{lem}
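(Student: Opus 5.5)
The plan is to reduce to a forest of stars and then apply second-order Bonferroni twice: once inside each star and once across stars.

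\emph{Reduction to stars.} Since $\delta(F)\ge 1$, a minimal edge cover of $F$ is a spanning subgraph $F'$ whose components are stars $C_1,\dots,C_q$. Let $C_j\cong K_{1,s_j}$ with $s_j\ge 1$, and write $x_j=s_j+1$, so that $x_1+\cdots+x_q=x$. If $F'^*$ connects $[r]$ and $[n]\setminus[r]$, so does $F^*$. Hence it suffices to prove (\ref{basic+}) for $F'$. I will not try to split stars further into the basic graphs (edges and open triangles) introduced above, because a star $K_{1,s}$ with $s\ge 3$ has no spanning basic subgraph. Let $E_j$ be the event that the copy $C_j^*$ connects the two sets. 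Since $C_j$ is connected, $E_j$ holds if and only if $C_j^*$ has vertices on both sides. Then
\[
\PP\Bigl\{\bigcup_j E_j\Bigr\}\ \ge\ \sum_j\PP\{E_j\}-\sum_{i<j}\PP\{E_i\cap E_j\}.
\]

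\emph{Single star.} Let $Y_j$ be the number of vertices of $C_j^*$ lying in $[r]$. Then $E_j=\{1\le Y_j\le x_j-1\}$, and the bound $\PP\{Y_j\ge1\}\ge \E Y_j-\E\binom{Y_j}{2}$ gives
\[
\PP\{E_j\}\ \ge\ \PP\{Y_j\ge 1\}-\PP\{Y_j=x_j\}\ \ge\ x_j\frac{r}{n}-\binom{x_j}{2}\frac{r(r-1)}{n(n-1)}-\frac{r(r-1)}{n(n-1)}.
\]
For the last term, $\{Y_j=x_j\}$ forces two fixed vertices of $C^*_j$ into $[r]$, and $x_j\ge 2$. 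Using $\frac{r(r-1)}{n(n-1)}\le \frac{r^2}{n^2}$ and the elementary inequality $\binom{x_j}{2}+1\le x_j+\frac{x_j^2}{2}$, we get
\[
\PP\{E_j\}\ \ge\ x_j\frac rn-\Bigl(x_j+\tfrac12 x_j^2\Bigr)\frac{r^2}{n^2}.
\]

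\emph{Cross terms.} The event $E_i\cap E_j$ implies that both $C_i^*$ and $C_j^*$ meet $[r]$. By the union bound over pairs $(u,v)$, with $u$ a vertex of $C_i^*$ and $v$ a vertex of $C_j^*$, these being distinct vertices of a uniform injection, we get
\[
\PP\{E_i\cap E_j\}\le x_ix_j\frac{r(r-1)}{n(n-1)}\le x_ix_j\frac{r^2}{n^2}.
\]

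\emph{Conclusion.} Summing the two bounds, the $r^2/n^2$ coefficients combine to $\sum_j x_j+\frac12\sum_j x_j^2+\sum_{i<j}x_ix_j=x+\frac12x^2$. This yields
\[
x\frac rn-x\frac{r^2}{n^2}-\frac12x^2\frac{r^2}{n^2},
\]
which is exactly the right side of (\ref{basic+}). The assumption $r\le n/10$ does not appear to be needed.

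The only delicate point is the within-star estimate: the right choice of spanning subgraph, and checking that the constant $\binom{x_j}{2}+1$ fits under $x_j+\frac12x_j^2$. Everything else is routine use of the hypergeometric (uniform injection) probabilities $\PP\{u\in[r]\}=r/n$ and $\PP\{u,v\in[r]\}=\frac{r(r-1)}{n(n-1)}$.
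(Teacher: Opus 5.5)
Your proof is correct, and it takes a genuinely different route from the paper's.

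The paper first notes that the event depends only on the vertex sets of the components of $F$. It therefore replaces each component by a clique, so that $P_{r,n}(F)=P_{r,n}(F_{\cal K})$. Next it passes to a spanning subgraph of $F_{\cal K}$ made of disjoint edges and open triangles. This is how it sidesteps the fact you point out, that $K_{1,s}$ with $s\ge 3$ has no spanning basic subgraph. It then proves Lemma \ref{2025-11-20+8} by second-order inclusion--exclusion over these edges and cherries. There the first-order terms are computed exactly, with a loss only $v\frac{r^2}{n^2}$, linear in $v$. The pair terms are bounded via the negative-correlation inequalities (\ref{2025-09-24+7})--(\ref{2025-09-24+9}), e.g.\ $\PP\{{\cal L}_1\cap{\cal T}_1\}<\PP\{{\cal L}_1\}\PP\{{\cal T}_1\}$. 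Their case-by-case verification is exactly where $r\le n/10$ enters.

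You instead apply Bonferroni to whole connected pieces. Inside a piece you pay $\binom{x_j}{2}+1$ rather than something linear. In exchange, the pair terms need only the crude union bound $x_ix_j\frac{r(r-1)}{n(n-1)}$, which is already at most $\bigl(x_i\frac rn\bigr)\bigl(x_j\frac rn\bigr)$ without any correlation argument. The coefficients then add up to exactly $x+\frac12x^2$.

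What this buys you is a shorter, self-contained argument valid for all $1\le r\le n$, so your remark that $r\le n/10$ is not needed is right. The paper's route additionally produces the sharper edge/cherry estimates and the correlation inequalities, which it presents as being of independent interest.

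One simplification: the reduction to stars is superfluous. Your estimates use only that each $C_j$ is connected with $x_j\ge 2$ vertices, so you can take the $C_j$ to be the components of $F$ itself.
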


\begin{proof}[Proof of Lemma \ref{basic}]
Denote for short $P_{r,n}(F)$ the probability on the left of (\ref{basic+}). Let $F_{\cal K}$ be the graph obtained from $F$ by replacing each component of $F$ by the clique having the same vertex set as the component. We observe  that
 $P_{r,n}(F)=P_{r,n}(F_{\cal K})$. Furthermore, for any 
basic subgraph   $F_B$  of $F_{\cal K}$ we have 
$P_{r,n}(F_{\cal K})\ge P_{r,n}(F_B)$.
Hence, $P_{r,n}(F)\ge P_{r,n}(F_B)$. Now (\ref{basic+}) follows from Lemma \ref{2025-11-20+8} below.
\end{proof}

 Note that inequality (\ref{basic+}) implies 
 \begin{align*}
 \PP\left\{
F^* {\text{connects   sets\ }} [r] {\text{\ and \ }} [n]\setminus[r]
\right\}
&
\ge 
\frac{r}{n}x
\left(
1
-
\min
\left\{
1,\frac{3}{2}\frac{r}{n}x
\right\}
\right).
\end{align*}
Indeed, for $\frac{3}{2}\frac{r}{n}x> 1$ the right side is negative and the inequality becomes trivial.
For $\frac{3}{2}\frac{r}{n}x\le 1$ the right side 
becomes $\frac{r}{n}x-\frac{3}{2}\frac{r^2}{n^2}x^2$. This quantity does not exceed the right side of (\ref{basic+}).

An immediate consequence of the latter inequality is the  following corollary  of Lemma \ref{basic}.
 
 \begin{col}\label{r_basic}
 Let $n, r$ be positive integers such that  
 $1\le r\le n/10$. 
Let $G$ be a random graph. Let $X$ denote the (possibly random) number of non-isolated vertices of $G$. We assume that the number of vertices of $G$ is at most
 $n$ 
with probability one. Let $G^*$ be a random copy of $G$ in ${\cal K}_n$.  We have
\begin{align}
\label{r_basic+}
\PP\left\{
G^* {\text{connects   sets\ }} [r] {\text{\ and \ }} [n]\setminus[r]
\right\}
\ge 
\frac{r}{n}
\E X
-
\frac{r}{n}
\E \left(
X
\min\left\{
1,\frac{3}{2}
\frac{r}{n}X
\right\}
\right).
\end{align}
 \end{col}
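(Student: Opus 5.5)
The plan is to condition on the realization of $G$ and apply the deterministic inequality obtained just before the corollary to each realization, then take expectations. A random copy $G^*$ of $G$ is produced in two stages. First $G$ is drawn. Then, given $G=F$, an independent random embedding $F\to F^*$ is drawn, with a uniformly random vertex set of size $|V_F|\le n$ and a uniformly random bijection. So, conditionally on $G=F$, $G^*$ is distributed as a random copy $F^*$ of the fixed graph $F$. By the law of total probability,
\begin{align*}
\PP\{G^* {\text{ connects }} [r] {\text{ and }} [n]\setminus[r]\}
=
\E\, P_{r,n}(G),
\end{align*}
where $P_{r,n}(F)$ denotes the probability that $F^*$ connects $[r]$ and $[n]\setminus[r]$.

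Fix a realization $F$ and let $x$ be the number of non-isolated vertices of $F$. If $x=0$, then $F$ has no edges, so $P_{r,n}(F)=0$, and the right side of (\ref{r_basic+}) evaluated pointwise also vanishes. If $x\ge 1$, then $x\ge 2$ automatically. Isolated vertices of $F$ never create an edge, so I remove them and reduce to the subgraph $F'$ spanned by the non-isolated vertices. This graph has $x$ vertices and $\delta(F')\ge 1$. I also need $P_{r,n}(F)=P_{r,n}(F')$. This holds because the image of the non-isolated vertices under a uniform injection of $V_F$ into ${\cal V}$ is a uniform injection of $V_{F'}$. Lemma \ref{basic}, in the strengthened form recorded right after its proof, then gives
\begin{align*}
P_{r,n}(F)\ge \frac{r}{n}x\Bigl(1-\min\Bigl\{1,\frac{3}{2}\frac{r}{n}x\Bigr\}\Bigr),
\end{align*}
with $2\le x\le n$ and $1\le r\le n/10$ as required.

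Finally, I substitute $x=X$ (random) and take expectations. Linearity gives exactly the right side of (\ref{r_basic+}). All quantities are bounded because $X\le n$, so the expectations are finite. The only step needing care is the equality $P_{r,n}(F)=P_{r,n}(F')$, that is, the compatibility of the random embedding with deleting isolated vertices. I will verify it by noting that restricting a uniform random injection to a subset of the domain yields a uniform random injection of that subset. Everything else is routine.
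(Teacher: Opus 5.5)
Your proposal is correct and is essentially the paper's argument: condition on the realization of $G$, apply the pointwise bound $P_{r,n}(F)\ge \frac{r}{n}x\bigl(1-\min\{1,\frac{3}{2}\frac{r}{n}x\}\bigr)$ obtained from Lemma \ref{basic}, and take expectations. The paper calls this an immediate consequence; you additionally spell out the reduction to the subgraph on non-isolated vertices (restriction of a uniform injection) and the case $x=0$, which the paper leaves implicit.
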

The random subgraph $G^*$ considered in Corollary \ref{r_basic} is the result of the two step procedure:
firstly we generate an instance $G=(V_G,E_G)$ of the random graph; secondly, given $G$, we generate its random copy $G^*$
(in ${\cal K}_n$). We assume that the two steps are stochastically independent. We note that  $G$  may have random number of edges and random configuration (an number) of edges. Also $G$ can be a deterministic graph,
when $\PP\{G=G_0\}=1$ for some deterministic graph $G_0$. 

We will apply Corollary \ref{r_basic} to random graphs
$G_{n,1},\dots, G_{n,m}$. Denote
\begin{align*}
q_{r,i}
=
1
-
\PP\{
G_{n,i} {\text{\ connects \ sets\ }} [r] {\text{ \ and \ }} [n]\setminus[r]\},
\qquad
i=1,\dots, m.
\end{align*} 
We will use  the short-hand notation
$\eta_r(x)=
x
\min\left\{
1,\frac{3}{2}
\frac{r}{n}x
\right\}$.
It follows from (\ref{r_basic+})  that
\begin{align}
\nonumber
q_{r,i}
&
\le
1
-
\frac{r}{n}
\E X_{n,i}
+
\frac{r}{n}\E
\left(
X_{n,i}
\min\left\{
1,\frac{3}{2}
\frac{r}{n}X_{n,i}
\right\}
\right)
\\
\label{q++}
&
=
1
-
\frac{r}{n}
\E X_{n,i}
+
\frac{r}{n}\E \eta_r(X_{n,i}).
\end{align} 

\begin{lem}\label{q-bounds}
Let $\beta\in(0,1)$.
Let $n\to+\infty$. Assume that $m=m(n)\to+\infty$.
Assume that 
(\ref{2025-11-24}) holds. 
The following statemens are true.

(i) For a sequence $\phi_n\downarrow 0$ satisfying (\ref{phi+})
we have for large $n$ that for any $1\le r\le n^{\beta}$
\begin{align}
\label{2025-11-27+6}
\max_
{H\subset [m]:\, |H|\ge m-\phi_n^{-1/4}}
\prod_{i\in H} q_{r,i}
\le 
e^{-r\frac{m}{n}\left(\kappa_n-\frac{2}{\ln n}\right)+r\phi_n^{1/4}}.
\end{align}

(ii) For sufficiently large $n$ we have for any 
$1\le r\le n^{\beta}$
\begin{align}\nonumber
\prod_{i\in [m]} q_{r,i}
\le 
e^{-r\frac{m}{n}\left(\kappa_n-\frac{2}{\ln n}\right)}.
\end{align}

(iii) We have for any 
$1\le r\le n/2$
\begin{align}
\nonumber
\prod_{i\in [m]} q_{r,i}
\le 
e^{-2\alpha_nr\frac{m}{n}\frac{n-r}{n-1}}.
\end{align}
\end{lem}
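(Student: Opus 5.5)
The plan is to start from the per-community bound (\ref{q++}), $q_{r,i}\le 1-\frac{r}{n}\E X_{n,i}+\frac{r}{n}\E\eta_r(X_{n,i})$, and then use $1-y\le e^{-y}$. This gives, for any $H\subset[m]$,
\begin{align*}
\prod_{i\in H}q_{r,i}\le \exp\Bigl\{-\frac{r}{n}\sum_{i\in H}\E X_{n,i}+\frac{r}{n}\sum_{i\in H}\E\eta_r(X_{n,i})\Bigr\}.
\end{align*}
Since $0\le\eta_r(x)\le x$, the bound still holds when the right side is negative. The second sum is at most $m\,\E\eta_r(X_{n,i_*})$. So parts (i) and (ii) reduce to two estimates. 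The first is a uniform bound $\E\eta_r(X_{n,i_*})\le \frac{2}{\ln n}$ valid for $r\le n^{\beta}$ and large $n$. The second is control of the terms removed when $H\ne[m]$.

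For the first estimate I will split according to whether $X\le t$, $t<X\le n^{(1-\beta)/2}$, or $X>n^{(1-\beta)/2}$, where $X=X_{n,i_*}$. On $\{X\le n^{(1-\beta)/2}\}$ I use $\eta_r(X)\le\frac32\frac{r}{n}X^2\le \frac32 n^{\beta-1}X^2$. With $X^2\le n^{(1-\beta)/2}X$, this contribution is $O(n^{-(1-\beta)/2}\E X)$, which is $o(1/\ln n)$ because $\sup_n\E X<\infty$ by (\ref{2025-11-24}). On $\{X>n^{(1-\beta)/2}\}$ I use $\eta_r(X)\le X\le X\ln(1+X)/\ln(1+n^{(1-\beta)/2})$. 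This contribution is at most $\frac{2}{(1-\beta)\ln n}\E(X\ln(1+X)\mathbb{I}_{\{X>n^{(1-\beta)/2}\}})$, and uniform integrability (\ref{2025-11-24}) makes it $o(1/\ln n)$. Together these give $\E\eta_r(X)=o(1/\ln n)\le 2/\ln n$ uniformly in $r\le n^\beta$, and (ii) follows at once with $H=[m]$.

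For (i), the removed indices $i\notin H$ number at most $\phi_n^{-1/4}$. Each has $\E X_{n,i}\le n\phi_n^{1/2}$ by (\ref{phi+}). The missing part of the first sum is therefore at most $\frac{r}{n}\phi_n^{-1/4}n\phi_n^{1/2}=r\phi_n^{1/4}$, which is exactly the extra term in (\ref{2025-11-27+6}).

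For (iii) a different lower bound on the connection probability is needed, one that does not degrade when $r$ is large. Condition on $G_{n,i}$ having at least one edge, which happens with probability $\PP\{X_{n,i}>0\}$. Fix an edge $\{x,y\}$ of $G_{n,i}$. Its random image is a uniform pair in $\mathcal{K}_n$, and it connects $[r]$ with $[n]\setminus[r]$ with probability $\frac{2r(n-r)}{n(n-1)}$. This gives $q_{r,i}\le 1-\PP\{X_{n,i}>0\}\frac{2r(n-r)}{n(n-1)}$, and taking the product with $1-y\le e^{-y}$ together with $\sum_i\PP\{X_{n,i}>0\}=m\alpha_n$ yields (iii).

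The main obstacle is the uniform $o(1/\ln n)$ estimate for $\E\eta_r$ in (i) and (ii). It is the only place where the logarithmic uniform integrability is really used, and the threshold splitting has to be chosen so that it works uniformly over all $r\le n^\beta$.
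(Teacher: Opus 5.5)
Your proposal is correct and follows the paper's proof essentially step for step. The common steps are the exponential bound obtained from (\ref{q++}), a uniform estimate of $\E\eta_r(X_{n,i_*})$ for $r\le n^{\beta}$, the loss $r\phi_n^{1/4}$ from the at most $\phi_n^{-1/4}$ removed communities via (\ref{phi+}), and the uniform-random-edge argument for (iii). The only difference is where you truncate: you cut at the $r$-independent level $X=n^{(1-\beta)/2}$ and get $\E\eta_r(X_{n,i_*})=o(1/\ln n)$, whereas the paper cuts at $\frac{r}{n}X=\tau=1/(\kappa_n\ln n)$ and gets $\frac{3}{2\ln n}+o(1/\ln n)$; either suffices for the stated bound $2/\ln n$.
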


\begin{proof}[Proof of Lemma \ref{q-bounds}]
Given $H\subset [m]$ we introduce the sums 
\begin{align*}
S(H)=\sum_{i\in H}\E X_{n,i},
\qquad
R(H)
=
\sum_{i\in H}
\E
\eta(X_{n,i})
\end{align*}
and explore their properties. 
We observe that   
 $S_1([m])=m\kappa_n$ and $R([m])
=
m\E
\eta_r(X_{n,i_*})$. 
Next, we show that, given $0<\beta<1$, 
we have 
for large $n$ that 
$R([m])\le 2\frac{m}{\ln n}$, 
(equivalently,
$\E
\eta_r(X_{n,i_*})
\le 
\frac{2}{\ln n}$) uniformly in $1\le r\le n^{\beta}$.
We denote 
 $\tau=\frac{1}{\kappa_n\ln n}$ and split
\begin{align*}
\E \eta_r(X_{n,i_*})
&
=
\E 
\left(
\eta_r(X_{n,i_*})
\left(
{\mathbb I}_{\{\frac{r}{n}X_{n,i_*}\le \tau\}}
+
{\mathbb I}_{\{\frac{r}{n}X_{n,i_*}> \tau\}}
\right)
\right)
\\
&
\le 
\frac{3}{2}\frac{r}{n}\E \left(X_{n,i_*}^2
{\mathbb I}_{\{\frac{r}{n}X_{n,i_*}\le \tau\}}
\right)
+
\E \left(
X_{n,i_*}
{\mathbb I}_{\{\frac{r}{n}X_{n,i_*}> \tau\}}
\right)
=:I_1+I_2.
\end{align*}
Furthermore, we  estimate (recall the notation $\xi_n=X_{n,i_*}\ln(1+X_{n,i_*})$)
\begin{align}
\label{2025-11-29}
I_1
&
\le 
\frac{3}{2}
\tau \E X_{n,i_*}
=
\frac{3}{2}\frac{1}{\ln n},
\\
\label{2025-11-27-1}
I_2
&
\le
\frac{1}{\ln(1+\frac{n}{r}\tau)}
\E \left(
\xi_n
{\mathbb I}_{\{\frac{r}{n}X_{n,i_*}> \tau\}}
\right)
\\
\label{2025-11-27-2}
&
\le 
\frac{1}
{
\ln
\left(
1
+
n^{1-\beta}\tau
\right)
}
\E \left(
\xi_n
{\mathbb I}_{\{X_{n,i_*}> n^{1-\beta}\tau\}}
\right)
\\
\label{2025-11-27-3}
&
=
o\left(\frac{1}{\ln n}\right).
\end{align}
Inequality (\ref{2025-11-29}) together with bound (\ref{2025-11-27-3}) implies inequality $\E
\eta_r(X_{n,i_*})
\le 
\frac{2}{\ln n}$ for large $n$.
We comment on
steps (\ref{2025-11-27-1}--\ref{2025-11-27-3}). 
Inequality (\ref{2025-11-27-1}) follows from the fact 
that $X_{n,i_*}>\frac{n}{r}\tau$ 
implies $\ln (1+X_{n,i_*})>\ln (1+\frac{n}{r}\tau)$.
Inequality (\ref{2025-11-27-2}) follows from the 
inequality $r\le n^{\beta}$. Bound (\ref{2025-11-27-3})
follows from the  relation 
\begin{align*}
\ln
\left(
1
+
n^{1-\beta}\tau
\right)
\ge 
\ln
\left(
\frac{n^{1-\beta}}{\ln n}
\frac {1}{\kappa_n}
\right)
=
 (1-\beta)\ln n-\ln\ln n-\ln \kappa_n
 \sim 
 (1-\beta)\ln n
 \end{align*}
   and the fact that 
(\ref{2025-11-24})  implies $\E \left(
\xi_n
{\mathbb I}_{\{X_{n,i_*}> n^{1-\beta}\tau\}}
\right)=o(1)$ for $n^{1-\beta}\tau\to+\infty$.
At this step we also  use inequality
$\limsup_n\kappa_n<\infty$,  which 
follows from  (\ref{2025-11-24}).

Proof of statement  (i). 
(\ref{phi+}) implies 
$\max_{i\in[m]}\E X_{n,i}\le \tau_n^{-2}n$ for
 $\tau_n:=\phi^{-1/4}_n$. 
For any $H\subset [m]$ of size $|H|\ge m-\tau_n$ we have 
\begin{align*}
S(H)
=
S([m])-\sum_{i\in [m]\setminus H}\E X_{n,i}
\ge 
S([m])-\tau_n \max_{i\in [m]}\E X_{n,i}
\ge 
m\kappa_n-\frac{n}{\tau_n}.
\end{align*}
Now 
using (\ref{q++}) and inequality $1+t\le e^t$ we 
upper bound the product
\begin{align*}
\prod_{i\in H}q_{r,i}
\le e^{-\frac{r}{n}S(H)+\frac{r}{n}R(H)}
\le e^{-\frac{r}{n}S(H)+\frac{r}{n}R([m])}
\le 
e^{-r\frac{m}{n}\left(\kappa_n-\frac{2}{\ln n}\right)+\frac{r}{\tau_n}}.
\end{align*}

Proof of statement (ii). Proceeding as in the proof of (i) above
we estimate
\begin{align*}
\prod_{i\in [m]}
q_{r,i}
\le e^{-\frac{r}{n}S([m])+\frac{r}{n}R([m])}
= e^{-\frac{r}{n}m\kappa_n+\frac{r}{n}R([m])}
\le 
e^{-r\frac{m}{n}\left(\kappa_n-\frac{2}{\ln n}\right)}.
\end{align*}

Proof of statement (iii). Denote 
$\alpha_{n,i}=\PP\{G_{n,i}$ has at least one edge$\}$.  
Let $L^*$ be an edge of 
$G_{n,i}=({\cal V}_{n,i},{\cal E}_{n,i})$ selected uniformly at 
random from the set of edges ${\cal E}_{n,i}$ when this set is non-empty. Clearly, $L^*$ 
is a random edge of the complete graph ${\cal K}_n$. 
We have
\begin{align*}
q_{r,i}
&
\le 
1
-
\PP\bigl\{
L^* {\text{\, connects \, sets\, }} [r] {\text{ \ and \ }} [n]\setminus[r]\bigr|{\cal E}_{n,i}\not=\emptyset
\bigr\}
\PP\{{\cal E}_{n,i}\not=\emptyset\}
\\
&
=
1-2\frac{r(n-r)}{n(n-1)}\alpha_{n,i}.
\end{align*} 
Furthermore, $1+t\le e^t$ implies 
$q_{r,i}
\le 
e^{-2\alpha_{n,i}\frac{r}{n}\frac{n-r}{n-1}}$. 
Hence 
$\prod_{i\in [m]}q_{n,i}\le e^{-2\alpha_n r\frac{m}{n}\frac{n-r}{n-1}}$.
\end{proof}

In the remaining part  of the section we formulate and prove Lemma \ref{2025-11-20+8}, which may be of independent interest.

  Let $F_{k,\ell}=(V_{k,\ell}, E_{k,\ell})$ be basic graph, which is a union of nonincident
  edges $L_1,\dots, L_k$ and open triangles (cherries)  $T_1,\dots, T_{\ell}$. Hence 
  $|V_{k,\ell}|=2k+3\ell=:v_{k,\ell}$ and $|\E_{k,\ell}|=k+2\ell$.
  Let $F^*_{k,\ell}=(V_{k,\ell}^*,E_{k,\ell}^*)$ be a random copy of $F_{k,\ell}$. Note that $F^*_{k,\ell}$ is the union of nonincident random edges 
  $L^*_1,\dots, L^*_k$  and random open triangles $T^*_1,\dots, T^*_\ell$.
  Given positive integer $r<n$ introduce event
  \begin{align*}
  {\cal A}_{r,k,\ell}
  =
  \{F^*_{k,\ell} \ {\text{connects sets }} 
  [r]
  \
   {\text{and}}
   \  
   [n]\setminus [r]\}.
  \end{align*}
  
\begin{lem} \label{2025-11-20+8} For 
$r\le n/10$ we have
\begin{align}
\label{2025-09-24+6}
\PP\{{\cal A}_{r,k,\ell}\}
\ge 
 v_{k,\ell}\frac{r}{n}\left(1-\frac{r}{n}\right)
 -
 \frac{1}{2}
v^2_{k,\ell}\frac{r^2}{n^2}.
\end{align}
\end{lem}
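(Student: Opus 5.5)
Write $v=v_{k,\ell}$, $p=r/n$, and let the components of $F^*_{k,\ell}$ be $C_1,\dots,C_{k+\ell}$ (random edges $L^*_j$ and random open triangles $T^*_j$). The plan is a second-order Bonferroni bound: ${\cal A}_{r,k,\ell}=\bigcup_j {\cal A}_j$, where ${\cal A}_j$ is the event that $C_j$ connects $[r]$ and $[n]\setminus[r]$, so
\begin{align*}
\PP\{{\cal A}_{r,k,\ell}\}\ge \sum_j\PP\{{\cal A}_j\}-\sum_{i<j}\PP\{{\cal A}_i\cap{\cal A}_j\}.
\end{align*}
It is cleaner, though, to run the inclusion--exclusion on vertices rather than on components. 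For $w\in V_{k,\ell}$ let $E_w$ be the event that the image of $w$ lies in $[r]$ while some neighbour of $w$ in $F_{k,\ell}$ has its image outside $[r]$. Then ${\cal A}_{r,k,\ell}=\bigcup_w E_w$, and there are exactly $v$ events, which is what should produce the coefficient $v$.

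\textbf{First-order term.} Each vertex has at least one neighbour, and its image is a uniform vertex; the image of a neighbour is uniform on the remaining $n-1$ vertices. For a degree-one vertex this gives
\begin{align*}
\PP\{E_w\}=\frac{r}{n}\cdot\frac{n-r}{n-1}\ge p(1-p).
\end{align*}
For the centre of a cherry the probability is even larger, since having two neighbours only helps. Hence $\sum_w\PP\{E_w\}\ge v\,p(1-p)$.

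\textbf{Second-order term.} We need $\sum_{\{w,w'\}}\PP\{E_w\cap E_{w'}\}\le \frac12 v^2p^2$. It suffices to show that for every pair $w\ne w'$,
\begin{align*}
\PP\{E_w\cap E_{w'}\}\le \PP\{\pi(w)\in[r],\,\pi(w')\in[r]\}=\frac{r(r-1)}{n(n-1)}\le p^2,
\end{align*}
and then use $\binom{v}{2}\le v^2/2$.

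\textbf{Main obstacle.} The step to watch is whether dropping the "neighbour outside $[r]$" requirement loses too much. It does not, because the target bound $\frac12 v^2p^2$ already accommodates $p^2$ per pair. The requirement is still useful, however: it forces the pair $\{w,w'\}$ to come from different edges, or to be the two leaves of a cherry. In particular, when $w,w'$ are the two endpoints of the same edge $L_j$, we get $E_w\cap E_{w'}=\emptyset$, which strengthens the bound slightly.

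The only remaining check is the case where the embedding's dependence between the images of distinct vertices could push the probability above $p^2$. Since $(r-1)/(n-1)\le r/n$, this cannot happen. The condition $r\le n/10$ is then not even needed beyond ensuring the quantities are meaningful.

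Combining the first- and second-order terms gives (\ref{2025-09-24+6}).
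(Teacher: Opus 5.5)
Your argument is correct, and it takes a different and simpler route than the paper. The paper applies the second Bonferroni inequality to the $k+\ell$ component events ${\cal L}_i$, ${\cal T}_j$. It bounds $\PP\{{\cal L}_1\}$ and $\PP\{{\cal T}_1\}$ from below. It then controls the pairwise terms through the negative-correlation inequalities (\ref{2025-09-24+7})--(\ref{2025-09-24+9}). For the cherry cases these need a case analysis over the four types ${\mathbb T}_1,\dots,{\mathbb T}_4$ of crossing cherries, and this is where $r\le n/10$ enters. The cases $r=1,2$ are also treated separately.

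You instead index the union by the $v_{k,\ell}$ vertices. Your first-order estimate $\PP\{E_w\}\ge \frac{r}{n}\cdot\frac{n-r}{n-1}$ uses only that $w$ has some neighbour. Your second-order estimate $\PP\{E_w\cap E_{w'}\}\le \frac{r(r-1)}{n(n-1)}\le \frac{r^2}{n^2}$ comes for free by discarding the neighbour condition. So no correlation analysis is needed, and the bound holds for every $1\le r\le n$.

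Your proof also never uses that $F_{k,\ell}$ is basic. The same two estimates hold verbatim for any graph $F$ on $x$ vertices with $\delta(F)\ge 1$. They therefore give (\ref{basic+}) of Lemma \ref{basic} directly, and the reduction $F\to F_{\cal K}\to F_B$ becomes unnecessary.

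The paper's component-level bookkeeping keeps more exact information: the crossing probabilities of whole edges and cherries, and the negative dependence between distinct components. The bound it finally extracts, however, is the same as yours.
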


\begin{proof}[Proof of Lemma \ref{2025-11-20+8}]
Let $r=1$. Let $v^*\in{\cal V}$  be a vertex selected uniformly at random  and independently of $F_{k,\ell}^*$. Relation (\ref{2025-09-24+6}) follows  from the identities
\begin{align*}
\PP\{{\cal A}_{1,k,\ell}\}=\PP\{1\in V_{k,\ell}^*\}
=\PP\{v^*\in V_{k,\ell}^*\}
=\frac{v_{k,\ell}}{n}.
\end{align*}

Let $r=2$. We evaluate the probability 
$1-\PP\{{\cal A}_{2,k,\ell}\}$ of the complement event 
${\bar {\cal A}}_{2,k,\ell}$. 
Let
$\{u^*,v^*\}\subset {\cal V}$  be a vertex pair selected uniformly at random and independently of $F_{k,\ell}^*$.
We have 
\begin{align*}
1-\PP\{{\cal A}_{2,k,\ell}\}
&
=
\PP\{\{u^*,v^*\}\in \{L^*_1,\dots, L^*_k\}\}
+
\PP\{\{u^*,v^*\}\cap V_{k,\ell}^*=\emptyset\}
\\
&
=
\frac{k}{\binom{n}{2}}
+
\frac{(n-v_{k,\ell})_2}{(n)_2}.
\end{align*}
Now, using $2k\le v_{k,\ell}$ one easily shows 
(\ref{2025-09-24+6}).

Let $r\ge 3$.
Introduce events 
\begin{align*}
{\cal L}_i
=
\{
L^*_i 
\,
{\text{connects sets }}
\,
[r]
\,
{\text{and}}
\
[n]\setminus[r]
\},
\quad
{\cal T}_j
=
\{
T^*_j
\,
{\text{connects sets }}
\,
[r]
\,
{\text{and}}
\
[n]\setminus[r]
\}.
\end{align*}
We write event ${\cal A}_{r,k,\ell}$ in the form 
${\cal A}_{r,k,\ell}
=
\left(\cup_{i}{\cal L}_i\right)\cup\left(\cup_{j}{\cal T}_j\right)$ and apply
the inclusion-exclusion inequalities
\begin{align}
\label{2025-09-26+2}
S_L+S_T-Q
\le
\PP\{{\cal A}_{r,k,\ell}\}
\le 
S_L+S_T,
\end{align}
where 
\begin{align*}
&
S_L
=
\sum_i\PP\{{\cal L}_i\}
=
k
\,
\PP\{{\cal L}_1\},
\quad 
S_T
=
\sum_{j}\PP\{{\cal T}_j\}
=
\ell
\,
\PP\{{\cal T}_{1}\},
\\
&
Q=Q_L+Q_T+Q_{LT},
\qquad
 Q_L
 =
 \sum_{i_1<i_2}\PP\{{\cal L}_{i_1}\cap{\cal L}_{j_2}\}
 =
 \binom{k}{2}
 \PP\{{\cal L}_{1}\cap{\cal L}_{2}\},
 \\
 &
 Q_T=\sum_{j_1<j_2}\PP\{{\cal T}_{i_1}\cap{\cal T}_{j_2}\}
 =
 \binom{\ell}{2}
 \PP\{{\cal T}_{1}\cap{\cal T}_{2}\},
 \quad
 Q_{LT}
 =
 \sum_{i}\sum_{j}
 \PP\{{\cal L}_{i}\cap{\cal T}_{j}\}
 =
 k\ell
 \,\PP\{{\cal L}_{1}\cap{\cal T}_{1}\}.
 \end{align*}

 We show below that
 \begin{align}
 \label{2025-09-26}
  &
  2\frac{r}{n}-2\frac{r^2}{n^2}
  <
 \PP\{{\cal L}_1\}
 <
 2\frac{r}{n},
 \\
 \label{2025-09-26+1}
 &
  3\frac{r}{n}-3\frac{r^2}{n^2}
 <
  \PP\{{\cal T}_1\}
  < 
 3\frac{r}{n},
 \\
 \label{2025-09-24+7}
  &
  \PP\{{\cal L}_1\cap {\cal L}_2\}
  <
  \PP\{{\cal L}_1\}
  \PP\{{\cal L}_2\},
  \qquad
  \
 {\text{for}}
 \quad
 r\le n/4,
 \\
 \label{2025-09-24+8}
 &  
 \PP\{{\cal L}_1\cap {\cal T}_1\}< \PP\{{\cal L}_1\}\PP\{{\cal T}_1\},
 \qquad
 \
 \
 {\text{for}}
 \quad
 r\le n/6,
  \\
  \label{2025-09-24+9}
  &
  \PP\{{\cal T}_2\cap {\cal T}_1\}<\PP\{{\cal T}_1\} \PP\{{\cal T}_2\},
  \qquad
  \
  \
  \
 {\text{for}}
 \quad
  r\le n/10.
  \end{align}
 
 Inequalities  (\ref{2025-09-26}),  (\ref{2025-09-26+1}) imply
 \begin{align}
\label{2025-11-20+9}
 v_{k,\ell}
 \frac{r}{n}
 -
 v_{k,\ell}\frac{r^2}{n^2}
 \le
 S_L+S_T
 \le
 v_{k,\ell}\frac{r}{n}.
 \end{align}
 Inequalities 
 (\ref{2025-09-24+7}),
 (\ref{2025-09-24+8}),
(\ref{2025-09-24+9}) imply for $r\le n/10$
\begin{align*}
Q
\le 
\binom{k}{2}
(\PP\{{\cal L}_1\})^2
+ 
\binom{\ell}{2}
(\PP\{{\cal T}_1\})^2
+
k\ell \PP\{{\cal L}_1\}\PP\{{\cal T}_1\}
\le 
\frac{1}{2}(S_L+S_T)^2
\le
\frac{1}{2} 
v_{k,\ell}^2\frac{r^2}{n^2}.
\end{align*}
In the last step we used second inequality of (\ref{2025-11-20+9}). Now (\ref{2025-09-24+6}) follows 
from (\ref{2025-09-26+2}),
\begin{align}
\nonumber
\PP\{{\cal A}_{r,k,\ell}\}
\ge 
S_L+S_T-Q
\ge
v_{k,\ell}\frac{r}{n}
\left(1-\frac{r}{n}\right)
-
\frac{1}{2}
v_{k,\ell}^2\frac{r^2}{n^2}.
\end{align}

It remains to show (\ref{2025-09-26}-\ref{2025-09-24+9}).

{\bf Proof of (\ref{2025-09-26}).}
 We  evaluate $\PP\{{\cal L}_1\}
 =
2\frac{r(n-r)}{n(n-1)}$ and invoke inequalities
 \begin{align*}
2\frac{r}{n} \ge 
2\frac{r(n-r)}{n(n-1)}
> 
2\frac{r(n-r)}{n^2}
 =
 2\frac{r}{n}-2\frac{r^2}{n^2},
 \end{align*}
{\bf Proof of  (\ref{2025-09-26+1}).}
We evaluate $\PP\{{\cal T}_1\}
 =
 1
 -
  \PP\{{\bar {\cal T}}_1\}
 =
 1
 -
 \frac{(r)_3}{(n)_3}
 -
 \frac{(n-r)_3}{(n)_3}$
 and invoke inequalities
  \begin{align*}
3\frac{r}{n}
\ge
 1
 -
 \frac{(n-r)_3}{(n)_3}
 >
 1
 -
 \frac{(r)_3}{(n)_3}
 -
 \frac{(n-r)_3}{(n)_3}
 > 
 1
 -
 \frac{r^3}{n^3}
 -
 \frac{(n-r)^3}{n^3}
 =
 3\frac{r}{n}-3\frac{r^2}{n^2}.
 \end{align*}

{\bf Proof of (\ref{2025-09-24+7}).}
We apply the product rule
$\PP\{{\cal L}_1\cap {\cal L}_2\}
 =
 \PP\{{\cal L}_1\}\PP\{{\cal L}_2|{\cal L}_1\}$
and estimate 
\begin{align*}
\PP\{{\cal L}_2|{\cal L}_1\}
= 
 \frac{(r-1)(n-r-1)}{\binom{n-2}{2}}
< 
\frac{r(n-r)}{\binom{n}{2}}
= 
\PP\{{\cal L}_1\}.
\end{align*}
The inequality above is equivalent to the inequality
\begin{align*}
\left(1-\frac{1}{r}\right)
\left(1-\frac{1}{n-r}\right)
<
\left(1-\frac{2}{n}\right)
\left(1-\frac{2}{n-1}\right),
\end{align*}
which follows from the inequality $1-\frac{1}{r}<\left(1-\frac{2}{n}\right)\left(1-\frac{2}{n-1}\right)$ valid for $r\le n/4$.

{\bf Proof of (\ref{2025-09-24+8}),
 (\ref{2025-09-24+9}).}
  Let ${\mathbb L}$ denote the set edges  of 
 ${\cal K}_n$
 connecting $[r]$ and $[n]\setminus[r]$.  Let ${\mathbb T}$ denote the set of open triangles $\tau$ of 
 ${\cal K}_n$
 connecting $[r]$ and $[n]\setminus[r]$. 
  Given a set $A$ we denote by ${\cal K}_A$ the 
 complete graph on the vertex set $A$.

{\bf Proof of (\ref{2025-09-24+8})}. 
We recall that $T_1^*$ is uniformly distributed across the 
set of open triangles of ${\cal K}_n$. Furthermore, given 
$T_1^*$, the random edge $L_1^*$ is uniformly distributed 
across the set of edges of ${\cal K}_n$ that are 
non-incident to $T_1^*$.
We partition 
 ${\mathbb T}
 =
 {\mathbb T}_1\cup{\mathbb T}_2
 \cup
 {\mathbb T}_3\cup{\mathbb T}_4$ so that

 $\tau\in {\mathbb T}_1\Leftrightarrow \tau$ contains an 
 edge in ${\cal K}_{[r]}$;
 
 $\tau\in {\mathbb T}_2\Leftrightarrow \tau$ contains an 
 edge in ${\cal K}_{[n]\setminus[r]}$;

 $\tau\in {\mathbb T}_3\Leftrightarrow \tau$ contains one vertex in 
 ${\cal K}_{[r]}$ and two non-adjacent vertices in ${\cal K}_{[n]\setminus[r]}$;
 
 $\tau\in {\mathbb T}_4\Leftrightarrow \tau$ contains one vertex in 
  ${\cal K}_{[n]\setminus[r]}$
   and two non-adjacent vertices in ${\cal K}_{[r]}$.

 We have, by the total probability formula,
 \begin{align*}
 \PP\{{\cal L}_1\cap {\cal T}_1\}
 &
 =
 \sum_{\tau\in{\mathbb T}}
 \PP\{{\cal L}_1|T^*_1=\tau\}
  \PP\{T^*_1=\tau\}
  =
\sum_{i=1}^4\sum_{\tau\in{\mathbb T}_i}
 \PP\{{\cal L}_1|T^*_1=\tau\}
  \PP\{T^*_1=\tau\}.
 \end{align*}
To prove (\ref{2025-09-24+8}) we show that 
$\PP\{{\cal L}_1|T^*_1=\tau\}
 \le \PP\{{\cal L}_1\}$
 for each $i=1,2,3,4$
 and every $\tau\in {\mathbb T}_i$.
 
Denote, for short, $N_L=|{\mathbb L}|=r(n-r)$ and $M_L=\binom{n}{2}$ so that   $\PP\{{\cal L}_1\}=\frac{N_L}{M_L}$. 
Similarly, for $\tau\in{\mathbb T}_i$ we denote
by $M_i$ (and $N_i$) 
the number of edges of ${\cal K}_n$ nonincident to $\tau$ (and connecting $[r]$ and $[n]\setminus[r]$) so that
$\PP\{{\cal L}_1|T^*_1=\tau\}
=\frac{N_i}{M_i}$.
Note that 
 $M_i=\binom{n-3}{2}$ for $i=1,2,3,4$. 
 We show that 
$\frac{N_i}{M_i}\le \frac{N_L}{M_L}$ for 
$i=1,2,3,4$ and $2\le r\le n/6$.

Let $i=1$. We have  
      $N_1=(n-r-1)(r-2)$. Inequalities 
\begin{align*}
\frac{N_1}{N_L}
&
=
\frac{(n-r-1)(r-2)}{(n-r)r}
=
\left(1-\frac{1}{n-r}\right)
\left(1-\frac{2}{r}\right)
< 
1-\frac{2}{r},
\\
\frac{M_1}{M_L}
&
=
\frac{(n-3)_2}{(n)_2}
=
\left(1-\frac{3}{n}\right)
\left(1-\frac{3}{n-1}\right)
=
1-\frac{6}{n}+\frac{6}{n(n-1)}
>
1-\frac{6}{n}
\end{align*}
imply 
$
\frac{N_1}{M_1}
<
\frac{N_L}{M_L}$.
Let $i=2$.  
We have   $N_2=(n-r-2)(r-1)$. Inequalities 
\begin{align*}
\frac{N_2}{N_L}
=
\frac{(n-r-2)(r-1)}{(n-r)r}
=
\left(1-\frac{2}{n-r}\right)
\left(1-\frac{1}{r}\right)
< 
1-\frac{1}{r}
\end{align*}
and 
$\frac{M_2}{M_L}
=
\frac{M_1}{M_L}
>
1-\frac{6}{n}
$
shown above imply
$\frac{N_2}{M_2}
<
\frac{N_L}{M_L}$.
The remaining cases $i=3,4$ are treated  in much the same 
way.

\smallskip
 
{\bf Proof of (\ref{2025-09-24+9}).}
 We have, by the total probability formula,
 \begin{align*}
 \PP\{{\cal T}_2\cap {\cal T}_1\}
 &
 =
 \sum_{\tau\in{\mathbb T}}
 \PP\{{\cal T}_2|T^*_1=\tau\}
  \PP\{T^*_1=\tau\}
  =
\sum_{i=1}^4
\sum_{\tau\in{\mathbb T}_i}
 \PP\{{\cal T}_2|T^*_1=\tau\}
  \PP\{T^*_1=\tau\}.
 \end{align*}
To prove (\ref{2025-09-24+9}) we show that 
$\PP\{{\cal T}_2|T^*_1=\tau\}
 \le \PP\{{\cal T}_2\}$
 for each $i=1,2,3,4$ and every $\tau\in {\mathbb T}_i$. In the proof we use the fact that given 
$T_1^*$, the random open triangle $T_2^*$ is uniformly distributed 
across the set of open triangles of ${\cal K}_n$ that are 
non-incident to $T_1^*$.
 
 Let  
 $N_T=|{\mathbb T}|=3\binom{r}{2}(n-r)+3\binom{n-r}{2}r$ denote
 the number of triangles 
 in $\mathbb T$ and 
$M_T=3\binom{n}{3}$ denote the number of open triangles of ${\cal K}_n$. 
We have $\PP\{{\cal T}_2\}=\frac{N_T}{M_T}$.
Similarly, for $\tau\in{\mathbb T}_i$ we denote
by 
$M_i$ (and $N_i$) the number of open triangles 
 of ${\cal K}_n$ non-incident to 
$\tau$ (and connecting $[r]$ and $[n]\setminus[r]$) so that
$\PP\{{\cal T}_2|T^*_1=\tau\}
=\frac{N_i}{M_i}$.
Note that 
 $M_i=3\binom{n-3}{3}$ for each $i=1,2,3,4$.
 We show that $\frac{N_i}{M_i}\le \frac{N_T}{M_T}$ for $i=1,2,3,4$.

Let $i=1$.
We evaluate the number  
$
N_1
=
3\binom{r-2}{2}(n-r-1)+ 3\binom{n-r-1}{2}(r-2)
$ and write it in the form
$
3\binom{r}{2}(n-r)\alpha+3\binom{n-r}{2}r \beta$,
where
\begin{align*}
\alpha
&
=\frac{(r-2)_2}{(r)_2}\frac{n-r-1}{n-r}
<
\frac{(r-2)_2}{(r)_2}
<
1-\frac{2}{r},
\\
\beta
&
=\frac{(n-r-1)_2}{(n-r)_2}\frac{r-2}{r}
<\frac{r-2}{r}
=
1-\frac{2}{r}.
\end{align*}
Hence
\begin{align*}
N_1
\le 
\max\{\alpha,\beta\}
\left(3\binom{r}{2}(n-r)+3\binom{n-r}{2}r \right)
<
\left(1-\frac{2}{r}\right)
N_T.
\end{align*}
Combining this inequality with the inequality
\begin{align}
\label{2025-09-24+5}
M_1
=
\frac{(n-3)_3}{(n)_3}
M_T
>
\left(1-\frac{10}{n}\right)
M_T,
\end{align}
which holds for  $n\ge 11$, we obtain for 
$n\ge 11$ and $r\le n/5$
\begin{align*}
\frac{N_1}{M_1}
< 
\frac{1-2r^{-1}}{1-10n^{-1}}\frac{N_T}{M_T}
\le 
\frac{N_T}{M_T}.
\end{align*}

Let $i=2$.  
We evaluate
$
N_2
=
3\binom{r-1}{2}(n-r-2)
+
3\binom{n-r-2}{2}(r-1)
$ 
and write it in the form
$
3\binom{r}{2}(n-r)\alpha+3\binom{n-r}{2}r \beta$,
where
\begin{align*}
\alpha
&
=\frac{(r-1)_2}{(r)_2}\frac{n-r-2}{n-r}
<
\frac{(r-1)_2}{(r)_2}
=
1-\frac{2}{r},
\\
\beta
&
=\frac{(n-r-2)_2}{(n-r)_2}\frac{r-1}{r}
<\frac{r-1}{r}
=
1-\frac{1}{r}.
\end{align*}
Hence
\begin{align*}
N_2
\le 
\max\{\alpha,\beta\}
\left(3\binom{r}{2}(n-r)+3\binom{n-r}{2}r \right)
<
\left(1-\frac{1}{r}\right)
N_T.
\end{align*}
This inequality combined with inequality
$\frac{M_2}{M_T}
=
\frac{M_1}{M_T}
>
1-\frac{10}{n}
$, see (\ref{2025-09-24+5})
above, 
implies 
for $n\ge 11$ and 
$r\le n/10$
\begin{align*}
\frac{N_2}{M_2}
< 
\frac{1-r^{-1}}{1-10n^{-1}}\frac{N_T}{M_T}
\le 
\frac{N_T}{M_T}.
\end{align*}
The remaining cases $i=3,4$ are treated  in much the same 
way. 
\end{proof}

\end{document}